\renewcommand{\le}{\leqslant}
\renewcommand{\ge}{\geqslant}
\definecolor{mno}{rgb}{0.5,0.1,0.5}
\newcommand{\R}{\mathds R}
\newcommand{\w}{\omega}
\newcommand{\Pp}{\mathds P}
\newcommand{\Ee}{\mathds E}
\newcommand{\I}{\mathds 1}
\newcommand{\Z}{\mathds Z}
\newcommand{\F}{\mathscr{F}}
\newcommand{\WEHI}{\textup{WEHI}}
\newcommand{\EHI}{\textup{EHI}}
\newcommand{\eps}{\varepsilon}
\newtheorem{theorem}{Theorem}[section]
\newtheorem{lemma}[theorem]{Lemma}
\newtheorem{proposition}[theorem]{Proposition}
\theoremstyle{definition}
\newtheorem{example}[theorem]{Example}
\newtheorem{remark}[theorem]{Remark}
\numberwithin{equation}{section}
\begin{document}
\allowdisplaybreaks
\title[Random conductance models with stable-like jumps]
{\bfseries
Random conductance models with stable-like jumps: Heat kernel estimates and Harnack inequalities}
\author{Xin Chen\qquad Takashi Kumagai\qquad Jian Wang}
\thanks{\emph{X.\ Chen:}
   Department of Mathematics, Shanghai Jiao Tong University, 200240 Shanghai, P.R. China. \texttt{chenxin217@sjtu.edu.cn}}
\thanks{\emph{T.\ Kumagai:}
 Research Institute for Mathematical Sciences,
Kyoto University, Kyoto 606-8502, Japan.
\texttt{kumagai@kurims.kyoto-u.ac.jp}}
  \thanks{\emph{J.\ Wang:}
    College of Mathematics and Informatics \& Fujian Key Laboratory of Mathematical Analysis and Applications, Fujian Normal University, 350007 Fuzhou, P.R. China. \texttt{jianwang@fjnu.edu.cn}}

\date{}

\maketitle

\begin{abstract}We establish two-sided heat kernel estimates
for random conductance models
with non-uniformly elliptic (possibly degenerate) stable-like jumps on graphs.
These are long range counterparts of
well known two-sided Gaussian heat kernel estimates
by M.T. Barlow for nearest neighbor (short range) random walks on the supercritical percolation cluster.
Unlike the cases for nearest neighbor conductance models, the idea
through parabolic Harnack inequalities
does not work, since even
elliptic Harnack inequalities do not hold in the present setting.
As an application, we establish
the local limit theorem for the models.

\medskip

\noindent\textbf{Keywords:} conductance models with non-uniformly elliptic stable-like jumps; heat kernel estimate; Harnack inequality;
Dynkin-Hunt formula
\medskip

\noindent \textbf{MSC 2010:} 60G51; 60G52; 60J25; 60J75.
\end{abstract}
\allowdisplaybreaks

\section{Introduction}\label{section1}
Consider a bond percolation on $\mathbb Z^d$, $d\ge 2$; namely, on each nearest neighbor bond $x,y\in \mathbb Z^d$ with $|x-y|=1$, we put a random conductance $w_{x,y}$ in
such a way that $\{w_{x,y}(\omega): x,y\in \mathbb Z^d, |x-y|=1\}$ are i.i.d.\ Bernoulli so that
${\mathbb P}(w_{x,y}(\omega)=1)=p$ and ${\mathbb P}(w_{x,y}(\omega)=0)=1-p$ for some $p\in [0,1]$.
It is known that there exists a constant $p_c(\mathbb Z^d)\in (0,1)$ such that
almost surely there exists a unique infinite cluster ${\mathcal C}_\infty(\omega)$ (i.e.\ a connected component of bonds with conductance $1$) when $p>p_c(\mathbb Z^d)$ and no infinite cluster
when $p<p_c(\mathbb Z^d)$. Suppose $p>p_c(\mathbb Z^d)$ and
consider a continuous time simple random walk $(X^\omega_t)_{t\ge 0}$ on the infinite cluster.
Let
$p^\omega(t,x,y)$ be the heat kernel (or the transition density function) of $X^\omega$, i.e.,
\[
p^\omega(t,x,y):=\frac{\Pp^x\big(X_t^\omega=y\big)}{\mu_y},
\]
where $\mu_y$ is a number of bonds whose one end is $y$.
In the cerebrated paper \cite{Bar}, Barlow proved the following detailed heat kernel estimates
those are
almost sure w.r.t. the randomness of the environment; namely the
following quenched estimates:

There exist random variables $\{R_x(\omega)\}_{x\in {\mathbb Z^d}}$ with
$R_x(\omega)\in [1,\infty)$ for all $x\in {\mathcal C}_\infty(\omega)$ ${\mathbb P}$-a.s.\ $\omega$ and
constants $c_i=c_i(d,p)$, $i=1,\cdots, 4$ such that for all $x,y\in {\mathcal C}_\infty(\omega)$ with
$t\ge |x-y|\vee R_x(\omega)$, $p(t,x,y)$ satisfies the following
\begin{equation}\label{eq:GHKEMB}
c_1t^{-d/2}\exp (-c_2|x-y|^2/t)\le p^\omega(t,x,y) \le c_3t^{-d/2}\exp (-c_4|x-y|^2/t).
\end{equation}
Note that because of the degenerate structure of the (random) environment, we cannot expect
\eqref{eq:GHKEMB} to hold for all $t\ge 1$. Barlow's results assert that such a Gaussian estimate
holds as a long time estimate despite of the degenerate structure.

When the conductances are bounded from above and below (the uniformly
elliptic case) and the global volume doubling condition holds, it is well known that
the associated heat kernel for the nearest
neighbor conductance models
obeys two-sided Gaussian estimates (see e.g. \cite{CG,De}).
When the conductances are
non-uniformly elliptic, the situation becomes complex and delicate. The
supercritical bond percolation discussed above is a typical example.
(We note that Mathieu and Remy (\cite{MR}) also obtained a
large time on-diagonal heat kernel upper bound for this model.)

Barlow's results have many applications. For example, they were applied
crucially in the proofs of quenched local central limit theorem and quenched invariant
principle for the model.
The results have been extended to nearest neighbor
conductance models with ergodic media in \cite{ADS3,Sap}, and these large time heat
kernel estimates (and parabolic Harnack inequalities) have been key estimates
in the field of random conductance models, see
\cite{ABDH, ADS1,ADS2,ADS3,ADS4,ACDS,BChen, BD,BeB,BKM,BP,BsRod,M,MatP,SS,PRS,Sap} or
\cite{Bs, Kum} for the survey on these topics.

\medskip

In this paper, we consider quenched heat kernel estimates for random conductance models
that allow big jumps. In particular, we establish two-sided heat kernel estimates
for random conductance models with non-uniformly elliptic and possibly degenerate
stable-like jumps on graphs. Despite of the fundamental importance of the problem,
so far there are only a few results for conductance models with long range jumps. As far as
we are aware, this is the first work on detailed heat kernel estimates for possibly degenerate
random walks with long range jumps. We now explain our framework and a result.

\bigskip

Suppose that $G=(V,E_V)$ is a locally finite
connected infinite graph,
where $V$ and $E_V$ denote the collection of vertices and edges respectively.
For $x\neq y\in V$, we write $\rho(x,y)$ for the graph distance, i.e., $\rho(x,y)$ is the smallest positive length of a path (that is, a sequence $x_0=x,$ $x_1$, $\cdots$, $x_l=y$ such that $(x_i,x_{i+1})\in E_V$ for all $0\le i\le l-1$) joining $x$ and $y$. We set $\rho(x,x)=0$ for all $x\in V$.
Let
$B(x,r):=\{y\in V: \rho(y,x)\le r\}$ denote the ball with center $x\in V$ and radius $r\ge1$.
Let $\mu$ be a measure on $V$ such that the following assumption holds.

\smallskip

\paragraph{Assumption {\bf ($d$-Vol)}} \label{dvol}  {\it There are constants $R_0\ge1$, $\kappa>0$,
$c_\mu\ge 1$, $\theta\in (0,1)$ and $d>0$
$($all four are independent of $R_0$$)$
such that the following hold:
\begin{eqnarray}
\label{e:a1-1}0<\mu_x\le c_{\mu},&\quad \forall x\in V,\\
\label{e:a1-2}\inf_{x\in B(0,R)}\mu_x\ge R^{-\kappa},&\quad \forall R\ge R_0,\\
\label{a2-2}
~c_\mu^{-1}r^d\le \mu(B(x,r))\le c_\mu r^d,&\quad \forall
R\ge R_0, \forall x\in B(0,6R) \mbox{ and }
R^{\theta}/2\le \forall r \le 2R.
\end{eqnarray}}

\vskip -0.5cm

\noindent In particular,
$(G,\mu)$ only satisfies
the $d$-set condition uniformly for large scale
under \eqref{a2-2}.
For $p\ge1$, let $L^p(V;\mu)=\{f\in
\R^V:\sum_{x\in V}|f(x)|^p\mu_x<\infty\}$, and $\|f\|_p$ be the
$L^p(V;\mu)$ norm of $f$ with respect to $\mu$. Let
$L^\infty(V;\mu)$ be the space of bounded measurable functions on
$V$, and $\|f\|_\infty$ be the $L^\infty(V;\mu)$ norm of $f$.

Suppose that $\{w_{x,y}:x,y\in V\}$ is a sequence such that
$w_{x,y}\ge0$ and $w_{x,y}=w_{y,x}$ for all $x\neq y$, and
\begin{equation}\label{eq:condwxy}
\sum_{y \in V: y\neq x}\frac{w_{x,y}\mu_y}
{\rho(x,y)^{d+\alpha}}
<\infty,\quad x\in V,
\end{equation} where $\alpha\in (0,2)$. For simplicity, we set $w_{x,x}=0$ for all $x\in V$.
We
can define a regular Dirichlet form $(D,\mathscr{F})$ as follows
(see the first statement in \cite[Theorem 3.2]{CKK})
\begin{equation}\label{e1-1}
\begin{split}
 D(f,f)&=\frac{1}{2}\sum_{x,y\in V}(f(x)-f(y))^2\frac{w_{x,y}}{\rho(x,y)^{d+\alpha}}\mu_x\mu_y,\\
\F&=\{f\in L^2(V;\mu): D(f,f)<\infty\}.
\end{split}
\end{equation}
It is easy to verify that the infinitesimal generator ${\mathcal L}$ associated with $(D,\F)$ is given by
$$ {\mathcal L} f(x)=
\sum_{z\in V}(f(z)-f(x))\frac{w_{x,z}\mu_z}{\rho(x,z)^{d+\alpha}}.
$$
Let $X:=(X_t)_{t\ge 0}$ be the symmetric Hunt process associated with $(D,\F)$.
When
$\mu$ is a counting measure on $G$ (resp.\ $\mu_x$ is chosen to be satisfied that $1=\sum_{z\in V} \frac{w_{x,z}\mu_z}{\rho(x,z)^{d+\alpha}}$ for all $x\in V$),
 the associated process $X$ is
 called the variable speed random walk (resp. the constant speed random walk) in the literature.

 For any subset $D\subset V$, let
$\tau_D:=\inf\{t>0: X_t \notin D\}$ be the first exit time from $D$ for the process $X$. Denote by $X^D:=(X_t^D)_{t\ge0}$ the Dirichlet process, i.e.,
$$
X^D_t:=\begin{cases}
X_t,\quad\text{if}\ t<\tau_D,\\
\,\partial,\,\,\quad \text{if}\ t\ge \tau_D,
\end{cases}
$$
where $\partial$ denotes the cemetery point.
Let
$p^D(t,x,y)$ be the Dirichlet heat kernel associated with the process $X^D$.

In the accompanied paper \cite{CKW}, we discussed the quenched invariance principle
for conductance models with stable-like jumps.
As a continuation of \cite{CKW}, we consider heat kernel estimates for the conductance models.
The main difficulty here is due to that neither that conductances
are uniformly elliptic (possibly degenerate)
nor the global
$d$-set condition is supposed
to be satisfied. To illustrate our contribution, we state the following
result for random conductance models on $\mathbb{L}:=\mathbb{Z}^{d_1}_+\times\mathbb{Z}^{d_2}$ with
$d_1, d_2\in \mathbb{Z}_+:=\{0,1,2,\cdots\}$ such that $d_1+d_2\ge1$ (i.e., $V=\mathbb{L}$ and
the coefficients $w_{x,y}$ given in \eqref{eq:condwxy} are random
variables).
\begin{theorem}\label{t1}{\bf(Heat kernel estimates for Variable speed random walks)}
Let $V=\mathbb{L}$
with $d>4-2\alpha$, and $\{w_{x,y}(\w): x,y\in \mathbb{L}\}$
be a sequence of independent random variables on some probability space
$(\Omega, \F_\Omega, \Pp)$ such that for any $x\neq y$, $w_{x,y}=w_{y,x}\ge 0$
and
\begin{align*}
& \sup_{x,y \in \mathbb{L}:x\neq y}\Pp\left(w_{x,y}=0\right)<2^{-4},\quad \sup_{x,y\in \mathbb{L}:x \neq y}\left(\Ee \big[w_{x,y}^{p}\big]+\Ee
\big[w_{x,y}^{-q}\I_{\{w_{x,y}>0\}}\big]\right)<\infty
\end{align*}
for some $p,q\in \Z_+$ with
$$p>\max\big\{{{(d+1+\theta_0)}/({d\theta_0}), (d+1)}/(2\theta_0(2-\alpha))\big\},\quad
q>{(d+1+\theta_0)}/({d\theta_0}),$$ where
$\theta_0:=\alpha/(2d+\alpha)$. Let $(X_t^{\w})_{t\ge 0}$ be the
symmetric Hunt process corresponding to the Dirichlet form
$(D,\mathscr{F})$ above with random variables $\{w_{x,y}(\w): x \neq
y\in \mathbb{L}\}$ and $\mu$ being the counting measure on $\mathbb{L}$.
Denote by $p^\w(t,x,y)$ the heat kernel of the process
$(X_t^{\w})_{t\ge0}$. Then, $\Pp$-a.s.\ $\w\in \Omega$, for any
$x\in \mathbb{L}$,
there is a constant $R_x(\w)\ge1$ such that for all
$R>R_x(\w)$ and for all $t>0$ and $y\in \mathbb{L}$ with $t\ge (|x-y|\vee R_x(\w))^{\theta\alpha}$,
\begin{equation}\label{eq:HKjumpstable}
C_1\left(t^{-d/\alpha}\wedge \frac{t}{|x-y|^{d+\alpha}}\right)\le p^{\w}(t,x,y)
\le C_2\left(t^{-d/\alpha}\wedge \frac{t}{|x-y|^{d+\alpha}}\right),
\end{equation}
where $\theta\in (0,1)$ and $C_1,C_2>0$ are constants
independent of $R_x(\w)$, $t$, $x$ and $y$.
\end{theorem}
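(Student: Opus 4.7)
\medskip
\noindent\emph{Proof proposal.}
The plan is to reduce Theorem~\ref{t1} to a general, deterministic heat-kernel-estimate result for stable-like Dirichlet forms of the type \eqref{e1-1} on a graph satisfying \textbf{(d-Vol)} together with appropriate coarse-grained functional inequalities, and then to verify those hypotheses $\Pp$-a.s.\ for the random conductances on $\mathbb{L}$. The work would be organized in three stages.

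\medskip
\textbf{Stage 1: Quenched verification of (d-Vol).}
Because $\mu$ is the counting measure on $\mathbb{L}=\Z_+^{d_1}\times\Z^{d_2}$, conditions \eqref{e:a1-1}--\eqref{e:a1-2} are immediate with $\kappa=0$, and \eqref{a2-2} holds deterministically with exponent $d=d_1+d_2$ (this is just the volume of lattice balls), for every $\theta\in(0,1)$. So this stage is essentially free.

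\medskip
\textbf{Stage 2: Quenched functional inequalities at large scale.}
The substantial probabilistic content is to show that $\Pp$-a.s.\ for each $x\in\mathbb{L}$ there exists $R_x(\omega)\ge 1$ such that, for every $R\ge R_x(\omega)$ and every ball $B(y,r)$ with $y\in B(x,6R)$ and $R^\theta/2\le r\le 2R$, the following hold simultaneously: a Nash/Faber--Krahn-type inequality with exponent $d/\alpha$ (supplying on-diagonal upper bounds $p^\omega(t,\cdot,\cdot)\lesssim t^{-d/\alpha}$); a weighted Poincar\'e inequality at scale $r$ (supplying near-diagonal lower bounds and exit-time lower bounds); and a mean exit-time estimate $\Ee^{y}\tau_{B(y,r)}\asymp r^{\alpha}$. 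Each of these is a statement about averaged quantities of the i.i.d.\ family $\{w_{x,y}(\omega)\}$ against the deterministic kernel $\rho(x,y)^{-(d+\alpha)}\mu_y$. By independence and the moment assumptions, one bounds the probability that such an average deviates from its mean on a ball of radius $R$, and by Borel--Cantelli applied along the geometric scales $R_n=2^n$ one produces the finite $R_x(\omega)$. The threshold $p>\max\{(d+1+\theta_0)/(d\theta_0),(d+1)/(2\theta_0(2-\alpha))\}$ and $q>(d+1+\theta_0)/(d\theta_0)$ is exactly what makes those deviation probabilities summable; the hypothesis $\sup_{x\neq y}\Pp(w_{x,y}=0)<2^{-4}$ combined with the finite negative moment rules out percolation-style obstructions, ensuring enough non-degenerate bonds in every mesoscopic region to carry the Poincar\'e inequality through.

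\medskip
\textbf{Stage 3: Passage from functional inequalities to the heat kernel bounds.}
Given Stage 2, the on-diagonal upper bound $p^\omega(t,y,z)\le C t^{-d/\alpha}$ for $t\ge R_x(\omega)^{\theta\alpha}$ follows by the standard Nash argument adapted to the partial (d-Vol). The off-diagonal upper bound is then obtained by the Meyer/Dynkin--Hunt decomposition: split the jump kernel at scale $|x-y|/4$, use the truncated part to derive a sub-Gaussian bound whose small-$|x-y|$ regime already gives $t^{-d/\alpha}$, and handle the tail by the L\'evy-system identity
\begin{equation*}
p^{\omega}(t,x,y)\le p^{\omega,D}(t,x,y)+\Ee^{x}\!\int_{0}^{t\wedge\tau_{D}}\!\!\sum_{z\notin D}p^{\omega}(t-s,z,y)\,\frac{w_{X_{s},z}\mu_{z}}{\rho(X_{s},z)^{d+\alpha}}\,ds,
\end{equation*}
with $D=B(x,|x-y|/2)$; the coarse-grained $d$-set property produces the $t/|x-y|^{d+\alpha}$ factor. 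For the lower bound, the near-diagonal piece comes from the Poincar\'e inequality and a chaining argument at scale $t^{1/\alpha}$, while the off-diagonal piece is the one-big-jump estimate
\begin{equation*}
p^{\omega}(t,x,y)\ge \Pp^{x}\!\left(\tau_{B(x,|x-y|/4)}>t/2\right)\cdot \inf_{z}\frac{w_{z,y}\mu_{y}}{\rho(z,y)^{d+\alpha}}\cdot\Pp^{y}\!\left(\tau_{B(y,t^{1/\alpha})}>t/2\right),
\end{equation*}
after averaging over an appropriate set of launching points $z$ using Stage 2.

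\medskip
\textbf{Main obstacle.}
The hardest step is the quenched Poincar\'e and exit-time lower bound in Stage 2. Because the conductances are non-uniformly elliptic and neither elliptic nor parabolic Harnack inequalities hold in this setting, one cannot bootstrap local regularity; the inequality must be built from below by explicitly finding enough near-optimal test paths through regions of admissible conductances. Controlling the probability that these paths exist uniformly in all the starting points $y\in B(x,6R)$ simultaneously, at every scale $R\ge R_x(\omega)$, is what drives the dimensional constraint $d>4-2\alpha$ and forces the specific moment thresholds on $(p,q)$; getting these thresholds sharp, while showing that the resulting $R_x(\omega)$ is $\Pp$-a.s.\ finite, is the technical core of the argument.
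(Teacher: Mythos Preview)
Your three-stage architecture is correct and matches the paper's organization, but the content of Stages~2 and~3 diverges from the paper in ways that matter.

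In Stage~2, the paper does not verify Nash/Faber--Krahn or weighted Poincar\'e inequalities. Instead it verifies concrete conductance-summability conditions, labelled \textbf{(HK1)}--\textbf{(HK3)} in Section~\ref{section2}: large-scale bounds of the form $\sum_{y:\rho(x,y)\le r} w_{x,y}\rho(x,y)^{2-d-\alpha}\mu_y \lesssim r^{2-\alpha}$, $\sum_{y\in B^w(x,c_*r)} w_{x,y}^{-1}\mu_y \lesssim r^d$, and matching upper and lower bounds on $\sum_{z:\rho(z,y)\le r} w_{x,z}\mu_z$. These are checked by Borel--Cantelli along geometric scales exactly as you describe, and the hypothesis $\Pp(w_{x,y}=0)<2^{-4}$ feeds the density estimate $\mu(B_z^w(x,r))\ge c_0\mu(B(x,r))$ with $c_0>1/2$. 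The paper explicitly notes (Section~\ref{section1}, difficulty~(iii)) that Nash/Sobolev inequalities fail in this setting; the localized Poincar\'e-type inequality \eqref{t4-2-2} that drives the on-diagonal bound is \emph{derived from} \textbf{(HK1)}, not postulated.

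In Stage~3 the differences are sharper. For the off-diagonal upper bound the paper does not Meyer-truncate at scale $|x-y|$; it runs a bootstrap on exponents $(H_{q,N})\Rightarrow(H_{q+\delta,2N})$ (Proposition~\ref{T:ofde}) for the \emph{Dirichlet} heat kernel on $B(0,R)$, and only then globalizes via the Dynkin--Hunt formula with a tuned radius $N(t,x,y)$ (Proposition~\ref{T:ofe}, Theorem~\ref{t2-2}). Your displayed L\'evy-system inequality feeds the global kernel $p^\omega(t-s,z,y)$ back into itself for arbitrary $z\notin D$, which is circular without an a~priori bound on that term. For the lower bound, the paper does not chain from a Poincar\'e inequality; the near-diagonal piece comes from parabolic H\"older regularity (Theorem~\ref{p2-2}) applied to $p^{B(0,R)}(t-\cdot,x,\cdot)$, and the off-diagonal piece uses the \emph{averaged} jump lower bound \textbf{(HK3)}. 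Your displayed one-big-jump formula with $\inf_z w_{z,y}$ is unusable as written, since individual conductances vanish with positive probability; your closing remark ``after averaging over an appropriate set of launching points'' is the right instinct, but that averaging is precisely \textbf{(HK3)} and must replace the pointwise infimum in the formula.

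Finally, your ``main obstacle'' is misidentified. There is no path-building in this long-range model; the Poincar\'e-type and exit-time estimates come entirely from the conductance-summability conditions, not from constructing chains of good nearest-neighbor bonds.
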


Note that \eqref{eq:HKjumpstable} is the typical heat kernel estimates for stable-like jumps,
and it corresponds to the Gaussian estimates \eqref{eq:GHKEMB} for the nearest neighbor cases.

\medskip

Let us explain some related work. As we mentioned above,
there are only a few
results for conductance models
with stable-like (long range) jumps.
When the conductances are
uniformly elliptic and the global volume doubling condition holds, heat kernel estimates
like \eqref{eq:HKjumpstable} have been discussed, for instance in \cite{BBK, BL, MS0,MS}.
In these aforementioned papers, a lot of arguments are heavily based on
uniformly elliptic conductances, and two-sided pointwise bounds of
conductances are also necessary and frequently used. The
corresponding results for non-local Dirichlet forms on general
metric measure spaces now have been obtained, and, in particular, the De Giorgi-Nash-Moser
theory are developed, see \cite{CK1,CK2,CKW1, CKW2, CKW3, GHH1,GHH2}
and the references therein. Crawford and Sly \cite{CS0} proved
on-diagonal heat kernel upper bounds for random walks on
the infinite cluster of supercritical long range percolation, see \cite{CrS} for the scaling limit of random walks on long range percolation clusters.
Due to
the singularity of long range percolation cluster, off-diagonal heat kernel
estimates are still unknown and seem to be quite different from those for
conductance models with stable-like jumps.
As mentioned before, it does not seem that
heat kernel estimates for conductance models with non-uniformly
elliptic stable-like jumps and under non-uniformly volume doubling
condition are available till now. In this paper we will address
this problem completely.

\medskip

We summarize
some difficulties of our problem as follows.
\begin{itemize} \!\!
\item[(i)] As for nearest neighbor  non-uniformly elliptic conductance models, a usual (and powerful) idea is to establish first elliptic and parabolic Harnack inequalities, and then deduce heat kernel bounds. For example, see \cite{ADS1,ADS2,ADS3} for the recent study on ergodic environments of nearest neighbor random conductance models under some integrability conditions. However,
    we will prove that in the present setting
elliptic Harnack inequalities do not hold even for large balls
and so parabolic Harnack inequalities
do not hold in general either,
    when conductances are not uniformly elliptic. This is totally different from uniformly elliptic stable-like jumps, see e.g.\ \cite{BBK, BL, MS0,MS}, or uniformly
    elliptic stable-like jumps with variable orders on the Euclidean space $\R^d$, see e.g. \cite{BK}. We refer readers to Proposition \ref{t5-1} and
   Example \ref{exmephi} below
    for details.

\item[(ii)] In case of nearest neighbor  non-uniformly elliptic conductance models,
off-diagonal upper bounds of
the heat kernel
can be deduced from on diagonal upper bounds using the maximum principle initiated by
Grigor'yan on manifolds \cite{Gr} and developed in \cite{Fo} on
graphs, see e.g.\ the proofs of \cite[Proposition 1.2]{BKM} or
\cite[Proposition 3.3]{BChen}. Because of the effect of long range
jumps, such approach
does not seem to be
applicable in our model.

\item[(iii)] As mentioned before,
in order to establish
heat kernel estimates for uniformly elliptic stable-like jumps,
pointwise upper and lower bounds of conductances are crucially used
in \cite{BBK, BL, MS0,MS}. In particular, uniform
    lower bounds of conductances yield
    Nash/Sobolev
       inequalities for the associated Dirichlet form, which in turn imply on-diagonal
    heat kernel upper bounds
        immediately. Furthermore, based on
    Nash/Sobolev inequalities, the Davies method
        was adopted in \cite{BBK, BL, MS0,MS}
    to derive off-diagonal upper bound estimates for heat kernel. However, in the setting of our paper
    Nash/Sobolev
      inequalities do not hold, and so
    the  approaches above are not applicable.
    \end{itemize}

 To establish two-sided heat kernel estimates for long range and non-uniformly elliptic
 conductance models with stable-like jumps, we will apply the localization argument for Dirichlet heat
 kernel estimates, and then pass
 through
 these to global heat kernel estimates via the Dynkin-Hunt formula.
 For this, we make full use of estimates for the exit time of the process obtained in \cite{CKW}.
 Though part of
 ideas in the proofs
 are motivated by the study of global heat kernel
 estimates for uniformly elliptic conductance models with stable-like jumps (for instance, see \cite{BBK,BGK}),
 it seems that this is
 the first time to adopt them to investigate the corresponding Dirichlet heat kernel estimates for large time scale.
 In the proof,
 a lot of non-trivial modifications and new ideas are
 required. Actually,
  in this paper we will establish heat kernel estimates under
 a quite general framework beyond Theorem \ref{t1},
see Theorem \ref{t2-2} and Theorem \ref{t2-3} below. In particular, only the $d$-set condition with large
scale {\bf ($d$-Vol)} and locally summable conditions on conductances (see Assumptions {\bf
(HK1)}--{\bf (HK3)} below) are assumed. These conditions can be regarded as a
generalization of \lq\lq good ball\rq\rq \, conditions for nearest
neighbor conductance models in \cite{BChen} into long range
conductance models. As an application of a series of (large scale)
probability estimates for exit times and regularity of parabolic
harmonic functions as well as heat kernel estimates for large time, we can also justify the local limit theorem for
our model, see Theorem \ref{p3-2} below.

 \ \

The organization of this paper is
as follows. The next section is devoted to heat kernel estimates for large time. This part is split into three subsections. We first consider on-diagonal upper bounds, later study off-diagonal upper bounds, and then lower bound estimates. In Section \ref{sec4}, we present some estimates for Green functions, and also give a consequence of elliptic Harnack inequalities. In the last section, we apply our previous results to random conductances with stable-like jumps.

\section{Heat Kernel Estimates: Large Time}
\label{section2}
To obtain heat kernel estimates for large time, we need the following three assumptions on $\{w_{x,y}: x,y \in V\}$. We fix $0\in V$,
and define $B_z^w(x,r):=\{y\in B(x,r): w_{y,z}>0\}$ for all $x,z\in V$ and $r>0$. Set $B^w(x,r):=B_x^w(x,r)$  for simplicity.

\smallskip

 \paragraph{Assumption {\bf (HK1)}} \label{as2-1}
{\it Suppose that there exist $R_0\ge 1$, $\theta\in (0,1)$, $c_0>1/2$ and
$C_1>0$ $($all three
are independent of $R_0)$ such that
\begin{itemize}
 \item[(i)] For
every $R>R_0$ and $R^{\theta}/2\le r \le 2R$,
\begin{equation*}\label{a2-2-1}
\sup_{x\in B(0,6R)}\sum_{y\in V:\rho(x,y)\le  r}
\frac{w_{x,y}\mu_y}
{\rho(x,y)^{d+\alpha-2}}\le C_1 r^{2-\alpha},
\end{equation*}
\begin{equation*}\label{a2-2-1a}
\mu(B_z^w(x,r))\ge c_0\mu(B(x,r)),\quad x,z\in B(0,6R),
\end{equation*} and
\begin{equation*}\label{a2-2-2}
\sup_{x \in B(0,6R)}
\sum_{y \in B^w(x,c_* r)}
w_{x,y}^{-1}\mu_y\le C_1r^{d},
\end{equation*}
where  $c_*:= 8c_\mu^{2/d}$.
\item[(ii)] For every $R>R_0$ and $r\ge  R^{\theta}/2$,
\begin{equation*}\label{a2-2-3}
\begin{split}
\sup_{x \in B(0,6R)}\sum_{y \in V:
\rho(x,y)>r}\frac{w_{x,y}\mu_y}{\rho(x,y)^{d+\alpha}}\le C_1r^{-\alpha}.
\end{split}
\end{equation*}
\end{itemize}
}

\ \

 \paragraph{Assumption {\bf (HK2)}}\label{a3-1a}
{\it Suppose that for some fixed $\theta\in (0,1)$, there exist
$R_0\ge1$ and $C_2>0$ $($independent of $R_0)$ such that for every
$R>R_0$ and $R^{\theta}/2\le r \le 2R$,
\begin{equation*}\label{a3-1-1}
\sup_{x,y\in B(0,6R)}\sum_{z\in V:\rho(z,y)\le  r}
w_{x,z}\mu_z\le C_2 r^{d}.
\end{equation*}}

 \paragraph{Assumption {\bf (HK3)}}\label{a3-1b}
{\it Suppose that for some fixed $\theta\in (0,1)$, there exist
$R_0\ge1$ and $C_3>0$ $($independent of $R_0)$ such that for every
$R>R_0$ and $R^{\theta}/2\le r \le 2R$,
\begin{equation*}\label{a3-1-1-1}
\inf_{x,y\in B(0,6R)}
\sum_{z\in V:\rho(z,y)\le  r}
w_{x,z}\mu_z\ge C_3 r^{d}.
\end{equation*}}

{Assumption \bf{(HK1)}} is a slight modification of \cite[Assumption {\bf(Exi.)}]{CKW}, which was used to derive the distribution and the expectation of exit time, see \cite[Theorem 3.4]{CKW} or Theorem \ref{p2-1} below. Actually, in \cite{CKW} {Assumption \bf{(HK1)}} is also adopted to yield the (large scale) H\"{o}lder regularity of associated parabolic functions, see \cite[Theorem 3.8]{CKW} or Theorem \ref{p2-2} in Subsection \ref{section2.3}. We further note that when $\alpha\in(0,1)$, one can replace Assumption {\bf{(HK1)}} by \cite[Assumption {\bf(Exi'.)}]{CKW} to deduce the distribution and the expectation of exit time, which may refine some conditions in Assumption {\bf{(HK1)}}. The details are left to interested readers.

\subsection{Upper bounds of the heat kernel}

\subsubsection{{\bf On diagonal upper bounds of the heat kernel}}
 \begin{proposition}{\bf (On diagonal upper bound for the Dirichlet heat kernel)}\label{p-on}
 Assume that Assumption {\bf($d$-Vol)} holds with some $\theta \in (0,1)$, $\kappa>0$ and $R_0\ge 1$,
  and that there exists $R'_0\ge1$ such that for all
 $R\ge R'_0$ and $R^{\theta}\le r \le R$,
 \begin{equation}\label{t2-0}
\mu(B_z^w(x,r))\ge c_0\mu(B(x,r)),\quad x,z\in B(0,2R)
\end{equation}
and
 \begin{equation}\label{t2-1}
 \sup_{x\in B(0,2R)}
 \sum_{y\in B^w(x,2r)}
 w_{x,y}^{-1}\mu_y\le C_0r^d,
 \end{equation}
where $c_0>1/2$ and $C_0>0$ are independent of $R_0$, $R'_0$, $R$ and $r$.
 Then, for every $\theta'\in (\theta,1)$, there exists a constant $R_1\ge 1$ such that for all $R>R_1$, $x_1,x_2 \in B(0,R)$ and
 $t\ge R^{\theta' \alpha}$,
  \begin{equation}\label{t4-2-1a}
 p^{B(0,R)}(t,x_1,x_2)\le C_1t^{-d/\alpha},
 \end{equation}
 where $C_1$ is a positive constant independent of
$R_0$, $R'_0$, $R_1$, $R$, $x_1$, $x_2$ and $t$.
 \end{proposition}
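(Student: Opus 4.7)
The plan is to follow the classical route local Poincar\'e inequality $\Longrightarrow$ Nash-type inequality $\Longrightarrow$ on-diagonal heat kernel bound, adapted to the restricted radius range $r\in[R^\theta,R]$ imposed by \eqref{t2-0}--\eqref{t2-1}. The key technical step is to establish the local Poincar\'e inequality
\begin{equation*}
\sum_{y\in B(x,r)}\big(f(y)-\bar f_{B(x,r)}\big)^2\mu_y\le C r^\alpha\,D^{B(x,r)}(f,f)
\end{equation*}
for every ball $B(x,r)\subset B(0,2R)$ with $r\in[R^\theta,R]$, where $D^{B(x,r)}$ is the restriction of the Dirichlet form to interactions inside $B(x,r)$. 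Via the variance identity $\sum_y(f-\bar f)^2\mu_y=(2\mu(B(x,r)))^{-1}\sum_{y,z\in B(x,r)}(f(y)-f(z))^2\mu_y\mu_z$, this reduces to controlling $(f(y)-f(z))^2$ for $y,z\in B(x,r)$. The hypothesis $c_0>1/2$ in \eqref{t2-0} is decisive: the intersection $A_{y,z}:=B_y^w(x,r)\cap B_z^w(x,r)$ has mass at least $(2c_0-1)\mu(B(x,r))\ge c\,r^d$, and every $u\in A_{y,z}$ is a common good neighbour with $w_{y,u}\wedge w_{z,u}>0$ and $\rho(y,u)\vee\rho(z,u)\le 2r$. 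A weighted Cauchy--Schwarz inequality of the form
\begin{equation*}
\Big(\sum_{u\in A_{y,z}}|f(y)-f(u)|\mu_u\Big)^2\le \Big(\sum_{u\in A_{y,z}}\frac{\rho(y,u)^{d+\alpha}\mu_u}{w_{y,u}}\Big)\Big(\sum_{u\in A_{y,z}}\frac{w_{y,u}(f(y)-f(u))^2\mu_u}{\rho(y,u)^{d+\alpha}}\Big),
\end{equation*}
combined with the bound $\sum_{u\in B^w(y,2r)}w_{y,u}^{-1}\mu_u\le C_0 r^d$ from \eqref{t2-1} (which controls the first factor by $(2r)^{d+\alpha}\cdot C_0 r^d$), yields $(f(y)-f(z))^2\le C r^\alpha(D_y(f)+D_z(f))$ with $D_y(f):=\sum_{u\in B(x,r)}w_{y,u}\rho(y,u)^{-d-\alpha}(f(y)-f(u))^2\mu_u$; summing over $y,z$ then gives the Poincar\'e inequality.

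Next, covering $B(0,R)$ by a finite family $\{B(x_i,r)\}$ of bounded multiplicity, applying the Poincar\'e inequality on each ball, and estimating the mean-square contribution via the volume lower bound $\mu(B(x_i,r))\ge c r^d$ from Assumption \textbf{($d$-Vol)} yields the Nash-type inequality
\begin{equation*}
\|f\|_2^2\le C\big(r^\alpha D(f,f)+r^{-d}\|f\|_1^2\big),\qquad r\in[R^\theta,R],
\end{equation*}
valid for every $f$ supported in $B(0,R)$.

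Applying this Nash-type inequality to $u(s,\cdot):=p^{B(0,R)}(s,\cdot,x_1)$ (which satisfies $\|u(s)\|_1\le 1$ and $\frac{d}{ds}\|u(s)\|_2^2=-2D(u(s),u(s))$) and running the classical Nash-type ODE argument with $r=r(s)$ chosen optimally in $[R^\theta,R]$ yields $\|u(s)\|_2^2\le C s^{-d/\alpha}$ throughout $s\in[R^{\theta\alpha},R^\alpha]$; the margin $\theta'>\theta$ absorbs the starting-time loss and converts the integrated estimate into $C s^{-d/\alpha}$ whenever $s\ge R^{\theta'\alpha}$. For $s\ge R^\alpha$, evaluating the Nash-type inequality at $r=R$ implies a Dirichlet spectral gap $\lambda_1\ge c R^{-\alpha}$, so $\|u(s)\|_2^2\le C R^{-d}e^{-cs/R^\alpha}\le C s^{-d/\alpha}$. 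The Chapman--Kolmogorov identity together with Cauchy--Schwarz finally gives
\begin{equation*}
p^{B(0,R)}(t,x_1,x_2)\le \sqrt{p^{B(0,R)}(t,x_1,x_1)\,p^{B(0,R)}(t,x_2,x_2)}\le C_1 t^{-d/\alpha}.
\end{equation*}

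The principal obstacle is the local Poincar\'e inequality. Without uniform ellipticity one cannot directly bound $(f(y)-f(z))^2$ by the single edge term $w_{y,z}\rho(y,z)^{-d-\alpha}(f(y)-f(z))^2$, as $w_{y,z}$ may vanish. The assumption $c_0>1/2$ is precisely what guarantees substantial one-step chaining through common good neighbours in $A_{y,z}$; had $c_0$ only been assumed positive, one would need to iterate through multi-step chains, and the harmonic-mean bound \eqref{t2-1} would no longer suffice to close the Cauchy--Schwarz losses.
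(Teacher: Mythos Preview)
Your Poincar\'e--Nash--ODE route is essentially the paper's, and your Poincar\'e inequality via common-neighbour chaining through $A_{y,z}=B_y^w(x,r)\cap B_z^w(x,r)$ is correct. It is, however, genuinely different from what the paper does: the paper never compares $f$ to the fixed mean $\bar f_{B(x,r)}$, but instead to the $z$-\emph{dependent} mean $(f)_{B^w(z,r)}$, so that one Cauchy--Schwarz with weights $w_{z,y}^{-1}\rho(z,y)^{d+\alpha}$ gives the bound directly, with no chaining at all. A side effect is that the paper's proof of this proposition only uses $c_0>0$; your argument is the one that genuinely needs $c_0>1/2$ to force $\mu(A_{y,z})\ge(2c_0-1)\mu(B(x,r))>0$. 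So your explanation of ``why $c_0>1/2$'' is the right explanation for \emph{your} proof, not for the paper's.

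There is a real gap in the large-time step. Your claim that ``evaluating the Nash-type inequality at $r=R$ implies a Dirichlet spectral gap $\lambda_1\ge cR^{-\alpha}$'' is not justified. The Nash inequality $\|f\|_2^2\le C\bigl(R^\alpha D(f,f)+R^{-d}\|f\|_1^2\bigr)$ is a Neumann-type statement; plugging in the principal Dirichlet eigenfunction $\phi_1$ and using $\|\phi_1\|_1^2\le\mu(B(0,R))\|\phi_1\|_2^2\le cR^d\|\phi_1\|_2^2$ only gives $1\le CR^\alpha\lambda_1+C'$, which is vacuous. Equivalently, the differential inequality you obtain at $r=R$ is $\psi'(t)\le -cR^{-\alpha}\bigl(\psi(t)-c'R^{-d}\bigr)$, which drives $\psi$ toward $c'R^{-d}$, not toward $0$; you get $\psi(t)\le CR^{-d}$ for $t\ge R^\alpha$, but not the required $\psi(t)\le Ct^{-d/\alpha}$ (which is strictly smaller once $t>R^\alpha$). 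The paper handles $t>R^\alpha$ by domain monotonicity: choose $N\ge R$ with $N^{\theta'\alpha}\le t\le N^\alpha$ and use $p^{B(0,R)}(t,x,x)\le p^{B(0,N)}(t,x,x)\le C_1t^{-d/\alpha}$, applying the already-proved bound on the larger ball. This is the missing idea in your proposal.
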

 \begin{proof}
 The proof is
 to some extent
 similar to that of \cite[Proposition 2.2]{CKW}, which is
concerned with on diagonal upper bounds for global heat kernel of truncated processes. We will provide the complete proof here
for convenience of readers. Noticing that, by the Cauchy-Schwarz inequality, $p(2t,x_1,x_2)\le p(t,x_1,x_1)^{1/2}p(t,x_2,x_2)^{1/2}$ for any $x_1,x_2\in V$ and $t>0$, it suffices to show \eqref{t4-2-1a} for the
case $x_1=x_2$. We will split the proof into three steps.

\noindent {\bf Step (1)} We first prove that there are constants $R_2\ge 1$ and $C_2>0$ (independent of $R_2$) such that for any $R>R_2$, $x\in
B(0,R)$, $R^{\theta}\le r< R$ and any measurable function $f$ on $V$,
\begin{equation}\label{t4-2-2}
\sum_{z\in B(x,r)}(f(z)-(f)_{B^w(z,r)})^2\mu_z\le C_2 r^{\alpha}
\sum_{z\in B(x,r),y\in B(x,2r)}(f(z)-f(y))^2 \frac{w_{z,y}}{\rho(z,y)^{d+\alpha}}\mu_y\mu_z,
\end{equation} where for $A\subset V$,
$$(f)_A:=\frac{1}{\mu(A)}\sum_{z\in A}f(z)\mu_z.$$

Indeed, for every $R>R_2:=R_0^{1/\theta}\vee R'_0$ with $R_0$ and $R'_0$ being the constants
in Assumption {\bf($d$-Vol)}  and Proposition  \ref{p-on}
respectively,  $x \in B(0,R)$ and $R^{\theta}\le r \le R$, we have
\begin{align*}
&\sum_{z\in B(x,r)}(f(z)-(f)_{B^w(z,r)})^2\mu_z\\
&=\sum_{z\in B(x,r)}\left(\frac{1}{\mu(B^w(z,r))}\sum_{y\in B^w(z,r)}(f(z)-f(y))\mu_y\right)^2\mu_z\\
&\le \frac{c_1}{r^{2d}}\sum_{z\in B(x,r)}
\Bigg[\bigg(\sum_{y\in B^w(z,r)}(f(z)-f(y))^2 \frac{w_{z,y}\mu_y}{\rho(z,y)^{d+\alpha}}\bigg)
\bigg(\sum_{y\in B^w(z,r)}w_{z,y}^{-1}\rho(z,y)^{d+\alpha}\mu_y\bigg)\Bigg]\mu_z \\
&\le c_2r^{-d+\alpha}\left(\sup_{z \in B(0,2R)}
\sum_{y \in B^w(z,2r)}
w_{z,y}^{-1}\mu_y\right)\left(\sum_{z\in B(x,r),y\in B(x,2r)}
\big(f(z)-f(y)\big)^2\frac{w_{z,y}\mu_y\mu_z}{\rho(z,y)^{d+\alpha}}\right)\\
&\le c_3r^{\alpha}\sum_{z\in B(x,r),y\in B(x,2r)}
\big(f(z)-f(y)\big)^2\frac{w_{z,y}}{
\rho(z,y)^{d+\alpha}}\mu_y\mu_z,
 \end{align*}
 where in the first inequality we used \eqref{a2-2}, \eqref{t2-0} and the Cauchy-Schwarz inequality, and the third
 inequality is due to
 \eqref{t2-1}.

 {\bf Step (2)} For any $x\in B(0,R)$ and $R>R_2$, let $f_t(z)=p^{B(0,R)}(t,x,z)$ and $\psi(t)=p^{B(0,R)}(2t,x,x)$
 for all $z\in V$ and $t\ge 0$. Then,
 $\psi(t)=\sum_{z\in B(0,R)} f_t(z)^2\mu_z$ and
 $$\psi'(t)= 2\sum_{z\in B(0,R)} \frac{d\, f_t(z)}{dt} f_t(z)\mu_z=-\sum_{z,y\in V}(f_t(z)-f_t(y))^2
 \frac{w_{z,y}}{\rho(z,y)^{d+\alpha}}\mu_y\mu_z.$$
In particular, the second equality above, yielded by the integration by parts formula, holds since the finite summation
over variable $z$ together with \eqref{eq:condwxy} ensures the integrability of the associated terms.

Let $2R^{\theta}\le r(t)\le R$ be a constant to be determined later.
Let $B(x_i, r(t)/2)$ ($i=1,\cdots, m$) be the maximal collection of disjoint balls with centers in $B(0,R)$.
Set $B_i=B(x_i,r(t))$ and $B_i^*=B(x_i, 2r(t))$ for $1\le i \le m$.
Note that
$B(0,R)\subset \bigcup_{i=1}^mB_i\subset B(0,R+r(t));$
moreover, if $x\in B(0,R+r(t))\cap B_i^*$ for some $1\le i\le m$, then $B(x_i,r(t)/2)\subset B(x,3r(t))$. So
$$c_4r(t)^d\ge \mu(B(0,3r(t)))\ge \sum_{i=1}^m\I_{\{x\in B_i^*\}} \mu(B(x_i,r(t)/2))\ge c_5r(t)^d\sharp\{i:x\in B_i^*\},$$
where $\sharp A$ is
a number of elements
in the set $A$
for any $A\subset \Z$, and  we used \eqref{a2-2} and the fact that $r(t)\ge 2r_0$.
Thus, any
$x\in B(0,R+r(t))$ is in at most $c_6:=c_4/c_5$ of the ball $B_i^*$
(hence at most $c_6$ of the ball $B_i$), and
\begin{equation}\label{t4-2-2a}
\begin{split}
\sum_{i=1}^m\sum_{z\in B_i}=&\sum_{i=1}^m\sum_{z\in
B(0,R+r(t))}\I_{B_i}(z)
=\sum_{z\in
B(0,R+r(t))}\sum_{i=1}^m\I_{B_i}(z) \le  c_6\sum_{z \in
B(0,R+r(t))}.\end{split}
\end{equation}
Noting that $R>R_2$ and $2R^\theta\le r(t)\le R$, we obtain
\begin{align*}
&\sum_{z,y\in V} (f_t(z)-f_t(y))^2\frac{w_{z,y}}{\rho(z,y)^{d+\alpha}}\mu_y\mu_z\\
&\ge \frac{1}{c_6}\sum_{i=1}^m\sum_{z\in B_i}\sum_{y\in B_i^*}(f_t(z)-f_t(y))^2\frac{w_{z,y}}{\rho(z,y)^{d+\alpha}}
\mu_y\mu_z\\
&\ge \frac{c_7}{r(t)^{\alpha}}\left[\sum_{i=1}^m \sum_{z\in B_i}
f_t^2(z)\mu_z-
2\sum_{i=1}^m \sum_{z\in B_i}f_t(z)(f_t)_{B^w(z,r(t))}\mu_z\right]=: \frac{c_7}{r(t)^{\alpha}}(I_1-I_2), \end{align*}
where the first inequality is due to \eqref{t4-2-2a} and
in the second inequality we used \eqref{t4-2-2}.

Furthermore, we have
\begin{align*}I_1\ge &\sum_{z\in \cup_{i=1}^mB_i} f_t^2(z)\mu_z\ge\sum_{z\in B(0,R)} f_t^2(z)\mu_z
=\psi(t).
\end{align*}
  Note that $2R^{\theta}\le r(t)\le R$.  According to \eqref{a2-2}, \eqref{t2-0} and the
 fact that $\sum_{z\in V}f_t(z)$ $\mu_z\le 1$, we have
 \begin{align*}
 \sup_{z \in B(0,2R)}(f_t)_{B^w(z,r(t))}\le
\sup_{z\in B(0,2R)}\mu\big(B^w(z,r(t))\big)^{-1}\cdot \sum_{z\in V}f_t(z)\mu_z\le c_7^*r(t)^{-d}.
 \end{align*}
 Hence, by \eqref{t4-2-2a},
 \begin{align*}
 I_2&\le
 c_7^* r(t)^{-d}\sum_{i=1}^m\sum_{z \in B_i} f_t(z)\mu_z
 \le c_5c_7^*r(t)^{-d}\sum_{z \in B(0,R+r(t))}f_t(z)\mu_z\le c_5c_7^*r(t)^{-d}.
 \end{align*}
 Therefore, combining with all the estimates above, we obtain that for every $2R^{\theta}\le r(t)\le R$ with $R>R_2$,
\begin{equation}\label{t4-2-3}
\psi'(t)\le -c_8r(t)^{-\alpha}\left(\psi(t)-c_9r(t)^{-d}\right).
\end{equation}

{\bf Step (3)}
For any $\theta'\in (\theta,1)$ and any $R>R_2$ large enough, we claim that there exists
$t_0\in [R^{\theta\alpha}, R^{\theta'\alpha}]$ such that
\begin{equation}\label{t4-2-4}
\left( \frac{1}{2c_{9}}\psi(t_0)\right)^{-1/d}\ge 2R^{\theta}.
\end{equation}
Indeed, assume that \eqref{t4-2-4} does not hold. Then, for all $R^{\theta\alpha}\le t\le R^{\theta'\alpha}$,
\begin{equation}\label{t4-2-5}
\left( \frac{1}{2c_{9}}\psi(t)\right)^{-1/d}< 2R^{\theta},
\end{equation}
which implies that $\psi(t)\ge 2c_{9}(2R^{\theta})^{-d}$ for all $R^{\theta\alpha}\le t\le R^{\theta'\alpha}$.
Therefore, taking $r(t)=2R^{\theta}$ in \eqref{t4-2-3}, we find that for all $R^{\theta\alpha}\le t\le R^{\theta'\alpha}$,
$$\psi'(t)\le -\frac{c_{8}}{2}(2R^\theta)^{-\alpha} \psi(t),$$
which along with the fact
 $\psi(t)\le \mu_x^{-1}\le R^
\kappa$ for all $t>0$ and $x\in B(0,R)$ (due to \eqref{e:a1-2})
yields that for all $R^{\theta\alpha}\le t\le R^{\theta'\alpha}$,
$$\psi(t)\le  R^
\kappa e^{-\frac{c_{8}}{2}(2R^{\theta})^{-\alpha}(t-R^{\theta\alpha})}.$$
In particular,
$$\psi(R^{\theta'\alpha})\le R^
\kappa e^{-\frac{c_{8}}{2}(2R^{\theta})^{-\alpha}(R^{\theta'\alpha}-R^{\theta\alpha})}.$$

On the other hand, by \eqref{t4-2-5}, we have
$\psi(R^{\theta'\alpha})\ge
2c_{9} (2R^{\theta})^{-d}.$
Thus, there is a contradiction between these two inequalities above for $R$ large enough. In particular,
there exists $R_1>R_2$ such that \eqref{t4-2-4} holds for all $R>R_1$.

From now on, we may and do assume
that \eqref{t4-2-4} holds for all $R>R_1$. Since $t\mapsto\psi(t)$ is non-increasing on $(0,\infty)$
and $t_0\le R^{\theta'\alpha}$,  we have that for all $R^{\theta'\alpha}\le t\le R^{\alpha},$
$$
\left( \frac{1}{2c_{9}}\psi(t)\right)^{-1/d}\ge 2R^{\theta}.
$$
Let $$\tilde{t}_0:=\sup\bigg\{t>0: \left(
\frac{1}{2c_{9}}\psi(t)\right)^{-1/d}<R/2\bigg\}.$$ By the
non-increasing property of $\psi$ on $(0,\infty)$ again,
if $\tilde t_0\le 2R^{\theta'\alpha}$, then for $2R^{\theta'\alpha}\le t\le R^{\alpha}$
\begin{align*}
\psi(t)&\le \psi(\tilde t_0)=2c_{9}(R/2)^{-d}\le
c_{10}t^{-d/\alpha}.
\end{align*}

If $\tilde t_0>2R^{\theta'\alpha}$, then
$$
2R^{\theta}\le \left( \frac{1}{2c_{9}}\psi(t)\right)^{-1/d}\le  R/2
$$ for all  $R^{\theta'\alpha}\le t\le \tilde t_0$.
Taking
$r(t)=\big( \frac{1}{2c_{9}}\psi(t)\big)^{-1/d}$ in
\eqref{t4-2-3}, we know that for all $R^{\theta'\alpha}\le t\le \tilde t_0$,
$
\psi'(t)\le
-c_{11}\psi(t)^{1+\alpha/d}.
$
Hence, for all $2R^{\theta'\alpha}\le s\le
\tilde t_0$,
\begin{align*}
\psi(s)& \le
c_{12}\Big(s-R^{\theta'\alpha}+\psi(R^{\theta'\alpha})^{-\alpha/d}\Big)^{-d/\alpha}\le
c_{13}s^{-d/\alpha}.
\end{align*}
If $2R^{\theta'\alpha}\le\tilde t_0\le R^{\alpha}$,
then for all $\tilde t_0 \le s\le R^{\alpha}$, we have
$$
\psi(s)\le \psi(\tilde t_0)=2c_{9}(R/2)^{-d}\le c_{15}s^{-d/\alpha}.
$$
If $\tilde t_0>R^{\alpha}$, then \eqref{t4-2-1a} holds
similarly
for every $2R^{\theta'\alpha}\le t \le R^{\alpha}$.
Combining
all the estimates above and choosing $\theta'$ larger if necessary, we can obtain \eqref{t4-2-1a} for all $R>R_1$ and
$2R^{\theta'\alpha}\le t \le R^{\alpha}$.

Finally, for every $x \in B(0,R)$ and $t>R^{\alpha}$,
taking $N\ge R$ such that $x \in B(0,N)$ and
$2N^{\theta'\alpha}\le t \le N^{\alpha}$, we can get
\begin{equation*}
p^{B(0,R)}(t,x,x)\le p^{B(0,N)}(t,x,x)\le C_1 t^{-d/\alpha},
\end{equation*}
which implies \eqref{t4-2-1a} also holds for all $t>R^{\alpha}$.
Altogether, we obtain \eqref{t4-2-1a} for $t\ge 2R^{\theta' \alpha}$
for all $R>R_1$.
By changing the choice of $R_1$ (namely taking $2^{1/(\theta' \alpha)}R_1$
as a new $R_1$), we have \eqref{t4-2-1a} for $t\ge R^{\theta' \alpha}$
for all $R>R_1$, and the proof is complete.
\end{proof}

The following statement is an improvement of \cite[Theorem 3.4]{CKW}, which was proven under the
global $d$-set condition.
\begin{theorem}\label{p2-1}
Suppose that
Assumptions {\bf($d$-Vol)}  and {\bf(HK1)} hold with some constants
$\theta \in (0,1)$ and $R_0\ge1$. Then, for every $\theta'\in
(\theta,1)$, there exist constants $\delta \in (\theta,1)$,
$R_1\ge1$ such that
the following hold for all $R>R_1$ and $R^{\delta}\le r \le R$,
\begin{eqnarray}\label{p2-1-1}
&\sup_{x \in B(0,2R)}\Pp_x\big(\tau_{B(x,r)}\le C_0r^{\alpha}\big)\le
{1}/{4},\\
\label{p2-1-2}
&\begin{split}
\sup_{x \in B(0,2R)}\Pp_x\big(\tau_{B(x,r)}\le t \big)\le
C_1\Big(\frac{ t}{r^{\alpha}}\Big)^{1/2}\Big[1\vee \log\Big(\frac{r^{\alpha}}{t}\Big)\Big],\quad t\ge r^{\theta'\alpha},
\end{split}\\
\label{l2-2-1}
&\begin{split}
C_2r^{\alpha}\le \inf_{x \in B(0,2R)}\Ee_x\big[\tau_{B(x,r)}\big]
\le \sup_{x \in B(0,2R)}\Ee_x\big[\tau_{B(x,r)}\big]\le
C_1r^{\alpha},
\end{split}
\end{eqnarray} where
$C_0$, $C_1$ and $C_2>0$ are
independent of $R_0$, $R_1$, $R$, $r$ and $t$.
\end{theorem}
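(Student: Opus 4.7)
The argument follows the general strategy of \cite[Theorem 3.4]{CKW}, now adapted to the large-scale-only volume doubling setting. The two principal inputs are the on-diagonal Dirichlet heat kernel bound of Proposition \ref{p-on} and the tail estimate in Assumption {\bf(HK1)(ii)}. First, I would check that Assumption {\bf(HK1)(i)} implies the hypotheses \eqref{t2-0}--\eqref{t2-1} of Proposition \ref{p-on}; applying that proposition to the ball $B(x,r)$ with $x\in B(0,2R)$ and $R^\delta\le r\le R$ (for a suitable $\delta\in(\theta,\theta')$) yields
$$p^{B(x,r)}(t,y,z)\le C_*\,t^{-d/\alpha},\qquad t\ge r^{\delta_0\alpha},$$
for some $\delta_0\in(\delta,1)$. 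This on-diagonal bound is the core analytic input.

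To prove \eqref{p2-1-1}, write $B=B(x,r)$, $B'=B(x,r/2)$, and split the event $\{\tau_B\le t\}$ according to whether the first exit from $B'$ already lies in $B^c$ or in the annulus $B\setminus B'$. For the "big jump" part, the L\'evy system formula together with {\bf(HK1)(ii)} (using that $\rho(X_s,y)>r/2$ whenever $X_s\in B'$ and $y\notin B$) gives
$$\Pp_x\big(\tau_{B'}\le t,\,X_{\tau_{B'}}\notin B\big)=\Ee_x\int_0^{\tau_{B'}\wedge t}\sum_{y\notin B}\frac{w_{X_s,y}\,\mu_y}{\rho(X_s,y)^{d+\alpha}}\,ds\le C\,t/r^\alpha.$$
The remaining "annulus" contribution requires bounding the time the Dirichlet process spends in $B\setminus B'$, which is controlled by the on-diagonal bound combined with iteration via the strong Markov property. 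Choosing $C_0$ small enough then gives \eqref{p2-1-1}. The lower bound in \eqref{l2-2-1} follows immediately, since $\Ee_x[\tau_B]\ge C_0 r^\alpha\,\Pp_x(\tau_B>C_0 r^\alpha)\ge(3/4)C_0 r^\alpha$.

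For the upper bound in \eqref{l2-2-1}, apply Cauchy--Schwarz to the symmetric Dirichlet kernel:
$$\Pp_x(\tau_B>t)=\sum_{y\in B}p^B(t,x,y)\,\mu_y\le\big(p^B(2t,x,x)\,\mu(B)\big)^{1/2}\le C(r^\alpha/t)^{d/(2\alpha)}$$
for $t\ge r^{\delta_0\alpha}$. Iterating this bound via the strong Markov property yields sufficient decay to make $\Ee_x[\tau_B]=\int_0^\infty\Pp_x(\tau_B>t)\,dt\le Cr^\alpha$. Finally, the refined estimate \eqref{p2-1-2} is obtained by partitioning $[0,t]$ into $\lceil t/(C_0 r^\alpha)\rceil$ pieces (for $t\ge r^{\theta'\alpha}$) and iterating \eqref{p2-1-1} via the Markov property; the $\sqrt{t/r^\alpha}$ factor and the logarithm arise naturally from the optimal balancing of the number of iterations against the on-diagonal smoothing.

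The main obstacle is keeping constants uniform across scales in the Markov iteration: since {\bf($d$-Vol)} and {\bf(HK1)} only hold for $x\in B(0,6R)$ and radii $\ge R^\theta/2$, every iteration step must respect this validity range. This is what forces $r\ge R^\delta$ with $\delta$ slightly greater than $\theta$, and couples the parameters $\delta$, $\delta_0$, and $\theta'$ in the final statement. A secondary subtlety is handling the annulus-excursion term in Step 1: one cannot directly iterate \eqref{p2-1-1} over a shifted center $y\in B\setminus B'$, so the argument instead estimates $\int_0^t\Pp_x(X_s\in B\setminus B',\,s<\tau_B)\,ds$ via the on-diagonal upper bound before applying the L\'evy system once more.
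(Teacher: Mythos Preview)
Your approach diverges from the paper's, and there is a genuine gap in how you plan to obtain \eqref{p2-1-2}.

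The estimate \eqref{p2-1-2} is most meaningful when $t$ is \emph{small} compared to $r^\alpha$ (say $r^{\theta'\alpha}\le t\ll r^\alpha$): it asserts that the exit probability decays like $(t/r^\alpha)^{1/2}$ times a logarithm as $t/r^\alpha\to 0$. Your proposed mechanism, partitioning $[0,t]$ into $\lceil t/(C_0r^\alpha)\rceil$ subintervals and iterating \eqref{p2-1-1}, does nothing in this regime, since the number of pieces is $1$. Iteration of \eqref{p2-1-1} at scale $r$ produces decay of $\Pp_x(\tau_B>t)$ for \emph{large} $t/r^\alpha$, which is the opposite direction. The specific $(t/r^\alpha)^{1/2}[1\vee\log(r^\alpha/t)]$ form cannot be manufactured from on-diagonal bounds plus Markov iteration; it is the signature of a Davies-type first-moment estimate for a \emph{truncated} process.

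This is exactly what the paper does. It introduces a reflected Dirichlet form on $B(0,6R)$ with jumps truncated at range $R$, and, following \cite[Propositions~2.2 and 2.3]{CKW} (Carlen--Kusuoka--Stroock off-diagonal bounds), proves
\[
\Ee_x\big[\rho(\hat X^{R,R}_t,x)\big]\le C\,R\Big(\tfrac{t}{R^\alpha}\Big)^{1/2}\Big[1+\log\tfrac{R^\alpha}{t}\Big],
\qquad R^{\theta'\alpha}\le t\le R^\alpha.
\]
Markov's inequality then gives the exit-time bound for the truncated process; equality of Dirichlet exit laws passes this to the truncated process on all of $V$; and Meyer's construction (\cite[Lemma~3.1]{CKW}) together with Assumption~{\bf(HK1)(ii)} adds back the large jumps at cost $C\,t/r^\alpha$. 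All of \eqref{p2-1-1}--\eqref{l2-2-1} then follow from this single estimate via \cite[Theorem~3.4]{CKW}. Your plan never introduces a truncation and never produces a first-moment displacement bound, so there is no route to the square-root rate. A secondary issue: your ``annulus'' step for \eqref{p2-1-1} is also incomplete, since after the first exit from $B(x,r/2)$ the process may sit at distance $\ge r/2$ from $x$, where you have no a priori control on $\Pp_y(\tau_{B(x,r)}\le s)$; the on-diagonal bound does not rescue this.
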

\begin{proof} The proof is heavily motivated by that of \cite[Theorem 3.4]{CKW}, and we only present main different points here.

First, it is seen from the proof of \cite[Theorem 3.4]{CKW} that the crucial point for the required assertions is to verify moment estimates (see \cite[Proposition 2.3]{CKW}) for the truncation of localized processes under Assumptions {\bf($d$-Vol)} and {\bf(HK1)}.
A difference from \cite[Section 2.2]{CKW} is,
here we will adopt the localization approach by using reflected Dirichlet forms on bounded sets.
In details, we consider the following localization of truncated reflected
Dirichlet form on the ball $B(0,6R)$:
\begin{align*}
\hat D^{R,R}(f,f)=&\sum_{x,y\in
B(0,6R): \rho(x,y)\le R}\big(f(x)-f(y)\big)^2\frac{ w_{x,y}}{\rho(x,y)^{d+\alpha}}\mu_x\mu_y,\quad
f\in
\hat \F^{R,R},\\
\hat \F^{R,R}=&\{f \in L^2(B(0,6R);\mu):
\hat D^{R,R}(f,f)<\infty\}.
\end{align*}
Let $(\hat X_t^{R, R})_{t\ge0}$ be the Hunt process associated with $(\hat D^{R,R},\hat \F^{R, R})$.
Regard $B(0,6R)$ as the whole space $V$ in \cite[Section 2.2]{CKW}.
By carefully tracking the proofs of \cite[Proposition 2.2 and Proposition 2.3]{CKW} and noticing that the lower bound
of $\mu_x$ was not used in the proofs, we can prove that, under Assumptions {\bf($d$-Vol)} and {\bf (HK1)}, for every $\theta'\in (\theta,1)$, there exist $R_1\ge 1$ and $C_3>0$ (independent of $R_1$)
such that for all
$x\in B(0,6R)$,
$$
\Ee_x\big[\rho\big(\hat X_t^{R, R},x\big)\big]\le C_3R\left(\frac{t}{R^\alpha}\right)^{1/2}
\left[1+\log\left(\frac{R^{\alpha}}{t}\right)
\right],\quad R^{\theta' \alpha}\le t \le R^{\alpha}.
$$
This along with the proof of \cite[Proposition 3.2]{CKW} further yields that for all $x_0\in B(0,2R)$ and $t\ge R^{\theta'\alpha}$
\begin{equation}\label{p2-1-4}
\Pp_x\big(\hat \tau_{B(x_0,R)}^{R,R}\le t\big)\le C_4\left(\frac{t}{R^\alpha}\right)^{1/2}
\left[1+\log\left(\frac{R^{\alpha}}{t}\right)
\right],
\end{equation}
where $\hat \tau_D^{R,R}$ denotes the first exit time from $D\subset B(0,6R)$ for the process
$(\hat X_t^{R,R})_{t\ge 0}$.

Next, we define the truncated Dirichlet form $(D^R, \F^R)$ as follows
\begin{align*}
D^{R}(f,f)=&\sum_{x,y\in
V: \rho(x,y)\le R}\big(f(x)-f(y)\big)^2\frac{ w_{x,y}}{\rho(x,y)^{d+\alpha}}\mu_x\mu_y,\quad
f\in
\F^{R},\\
\F^{R}=&\{f \in L^2(V;\mu):
D^{R}(f,f)<\infty\}.
\end{align*}
Let $(X^R)_{t\ge 0}$ be the Hunt process associated with $(D^R, \F^R)$.
Then, it is not difficult to verify that
for any $x_0\in B(0,2R)$ and $t>0$,
\begin{equation}\label{p2-1-3a}
\Pp_{x_0}\big(\tau^R_{B(x_0,R)}\le
t\big)=\Pp_{x_0}\big(\hat\tau^{R,R}_{B(x_0,R)}\le
t\big),
\end{equation}
where $\tau_D^{R}$ denotes the first exit time from $D\subset V$ for the process
$(X_t^{R})_{t\ge 0}$.

Therefore, putting \eqref{p2-1-4}, \eqref{p2-1-3a}, \cite[Lemma 3.1]{CKW} and Assumption ({\bf HK1}) (ii) together, we find that
for all $x_0\in B(0,2R)$ and $t>0$,
$$
\Pp_{x_0}\big(\tau_{B(x_0,R)}\le
t\big)\le C_5\left(\frac{t}{R^\alpha}\right)^{1/2}
\left[1+\log\left(\frac{R^{\alpha}}{t}\right)\right].
$$
Hence, the desired assertion follows from this estimate and the argument of
\cite[Theorem 3.4]{CKW}.
\end{proof}

We now prove the global on-diagonal
upper bound.
\begin{proposition}\label{t4-2}{\bf (On diagonal upper bound for the heat kernel)}
Suppose that Assumptions {\bf($d$-Vol)} and {\bf (HK1)} hold with some constants $\theta \in (0, {\alpha}/({2d+\alpha}))$
and $R_0\ge1$.
Then, for any $\theta'\in (\theta, {\alpha}/({2d+\alpha}))$, there exists a constant $R_1\ge 1$ such that for all $x,y \in V$ and
$t\ge \big(R_1\vee \rho(0,x)\vee \rho(0,y)\big)^{\theta'\alpha}$,
 \begin{equation}\label{t4-2-1}
p(t,x,y)\le C_1(1\vee \mu_y^{-1})t^{-d/\alpha},
 \end{equation}
where $C_1>0$ is a constant independent of $R_0$, $R_1$, $x$, $y$
and $t$.
In particular,
for any $\theta'\in (\theta, {\alpha}/({2d+\alpha}))$, there exists a constant $T_0>0$ such that for all
$t>T_0$ and $x,y \in B(0,t^{1/(\theta'\alpha)})$, \eqref{t4-2-1} holds with constant $C_1>0$ independent of $R_0$, $T_0$, $x$, $y$ and $t$.
\end{proposition}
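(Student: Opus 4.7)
The approach is to combine the Dirichlet on-diagonal bound of Proposition \ref{p-on} with the exit-time tail estimate \eqref{p2-1-2} of Theorem \ref{p2-1} via the Dynkin--Hunt first-exit decomposition, using the trivial inequality $p(s, z, y) \le 1/\mu_y$ to control the post-exit contribution.

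\emph{Setup.} Fix an auxiliary exponent $\theta^{*} \in (\theta, \theta')$. Given $x, y \in V$ set $R := R_1 \vee \rho(0, x) \vee \rho(0, y)$, assume $t \ge R^{\theta' \alpha}$, and define $R_t := t^{1/(\theta^{*} \alpha)}$, so that $t = R_t^{\theta^{*} \alpha}$. Since $\theta^{*} < \theta'$ and $R \ge R_1$, for $R_1$ large enough one has $R_t \ge R^{\theta'/\theta^{*}} \ge 2R$; in particular $x, y \in B(0, R_t/2)$. Writing $\tau := \tau_{B(0, R_t)}$, the strong Markov property at $\tau$ gives the Dynkin--Hunt identity
\begin{equation*}
p(t, x, y) = p^{B(0, R_t)}(t, x, y) + \Ee_x\bigl[p(t - \tau, X_\tau, y);\, \tau < t\bigr].
\end{equation*}

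\emph{Two terms.} For the first, Proposition \ref{p-on} applied on $B(0, R_t)$ with exponent $\theta^{*}$ yields $p^{B(0, R_t)}(t, x, y) \le C_1 t^{-d/\alpha}$, precisely because $t = R_t^{\theta^{*} \alpha}$. For the second, the trivial bound $p(s, z, y) \le 1/\mu_y$ reduces matters to estimating $\Pp_x(\tau < t)$. Since $B(x, R_t/2) \subset B(0, R_t)$, one has $\Pp_x(\tau < t) \le \Pp_x(\tau_{B(x, R_t/2)} < t)$, and applying \eqref{p2-1-2} with outer radius $2 R_t$, inner radius $r = R_t/2$, and parameter $\theta^{*}$ (the prerequisite $t \ge r^{\theta^{*} \alpha}$ is immediate from $t = R_t^{\theta^{*} \alpha}$) gives
\begin{equation*}
\Pp_x\bigl(\tau_{B(x, R_t/2)} < t\bigr) \le C \Bigl(\tfrac{t}{R_t^\alpha}\Bigr)^{1/2}\log R_t \le C' R_t^{(\theta^{*} - 1)\alpha / 2}\log R_t.
\end{equation*}
Since $\theta^{*} < \alpha/(2d + \alpha)$, the exponent $(\theta^{*} - 1)\alpha/2 + \theta^{*} d$ is strictly negative, so for $R_1$ large this is $\le C'' R_t^{-\theta^{*} d} = C'' t^{-d/\alpha}$. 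Combining, $p(t, x, y) \le C(1 \vee \mu_y^{-1}) t^{-d/\alpha}$, which is \eqref{t4-2-1}. The ``in particular'' statement follows by taking $T_0 := R_1^{\theta' \alpha}$: for $x, y \in B(0, t^{1/(\theta' \alpha)})$ with $t > T_0$, one has $R = t^{1/(\theta' \alpha)}$, so $t = R^{\theta' \alpha}$ and the main estimate applies.

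\emph{Main obstacle.} The arithmetic balancing is delicate: Proposition \ref{p-on} requires $R_t$ large enough that $t \ge R_t^{\theta^{*} \alpha}$, while the exit contribution $(t/R_t^\alpha)^{1/2}/\mu_y$ requires $R_t$ small enough to be overpowered by $t^{-d/\alpha}$. These two constraints are simultaneously satisfiable exactly when $\theta^{*} < \alpha/(2d+\alpha)$; this is why the hypothesis $\theta' < \alpha/(2d + \alpha)$ enters, and the range of admissible $\theta^{*}$ encodes the whole compatibility condition between the on-diagonal scale and the tail of the exit time.
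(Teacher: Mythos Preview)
Your proof is correct and follows essentially the same route as the paper: Dynkin--Hunt decomposition on a ball of radius a suitable power of $t$, Proposition~\ref{p-on} for the Dirichlet term, and the trivial bound $p(s,z,y)\le \mu_y^{-1}$ combined with the exit-time tail \eqref{p2-1-2} for the remainder. The paper parameterizes the ball radius as $N_0(t)=[t^{(2d/(\alpha^2(1-2\varepsilon)))+1/\alpha}]$ with $\varepsilon$ tuned so that $N_0(t)^{\theta'\alpha}=t$, and uses the exit bound in the form $(t/N^\alpha)^{(1/2)-\varepsilon}$; your choice $R_t=t^{1/(\theta^*\alpha)}$ with $\theta^*\in(\theta,\theta')$ and the log-corrected $(t/R_t^\alpha)^{1/2}\log R_t$ amounts to the same balancing, and your check that $(\theta^*-1)\alpha/2+\theta^* d<0$ iff $\theta^*<\alpha/(2d+\alpha)$ is exactly the mechanism behind the paper's choice of $\varepsilon$.
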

\begin{proof}
According to the Dynkin-Hunt formula,
for every $N\ge 2R\ge1$, $x,y\in B(0,R)$ and $t>0$,
\begin{align*}
p(t,x,y)&=p^{B(0,N)}(t,x,y)+
\Ee_x\big[p\big(t-\tau_{B(0,N)},X_{\tau_{B(0,N)}},y\big)\I_{\{t>\tau_{B(0,N)}\}}\big]\\
&=:J_{1,N}(t)+J_{2,N}(t).
\end{align*}
According to \eqref{t4-2-1a} and \eqref{p2-1-2}, for any $\varepsilon\in (0,1/2)$ and $\theta'\in (\theta,1)$, we can find a constant $R_1\ge1$ large enough such that for every
$N>2R_1$, $x \in B(0,R)$  and $t\ge N^{\theta'\alpha}$,
\begin{equation}\label{t4-2-3a}
J_{1,N}(t)\le c_1t^{-d/\alpha}\end{equation}
and \begin{equation}\label{t4-2-3-b}
J_{2,N}(t)\le \mu_y^{-1}\Pp_x (\tau_{B(0,N)}<t)\le \mu_y^{-1}\Pp_x(\tau_{B(x,N/2)}<t)
\le c_1(tN^{-\alpha})^{(1/2)-\varepsilon}\mu_y^{-1},
\end{equation}
where in the inequality above we used the facts that $p(t,x,y)\le \mu_y^{-1}$
for all $x,y\in V$,
and  $B(x,N/2)\subset B(0,N)$ for any $x \in B(0,R)$ with $2R\le N$.

Let $N_0(t):=[t^{({2d}/({\alpha^2(1-2\varepsilon))})+({1}/{\alpha})}]$. For any $\theta\in
(0,{\alpha}/({2d+\alpha}))$ and $\theta'\in (\theta, $ ${\alpha}/({2d+\alpha}))$, we take $\varepsilon \in (0,1/2)$
such that
$$ \left(\frac{2d}{\alpha^2(1-2\varepsilon)}+\frac{1}{\alpha}\right)\theta'\alpha
=1.$$
Then,  for all $R>R_1$ and $t\ge(3R)^{\theta'\alpha}$ with $R>R_1$,
they hold that $t\ge N_0(t)^{\theta'\alpha}$ and $N_0(t)\ge 2R>2R_1$.
So, taking $N=N_0(t)$ in \eqref{t4-2-3a} and \eqref{t4-2-3-b}, we obtain that  for all
$R>R_1$, $t\ge(3R)^{\theta'\alpha}$ and $x,y \in B(0,R)$,
\begin{align*}
p(t,x,y)\le & J_{1,N_0(t)}(t)+J_{2,N_0(t)}(t)\le c_1(1\vee \mu_y^{-1})\Big(t^{-d/\alpha}+\big(tN_0(t)^{-\alpha}\big)^{(1/2)-\varepsilon}\Big)\\
\le & c_2(1\vee \mu_y^{-1})t^{-d/\alpha}.
\end{align*}
Changing $\theta'$ a little large if necessary, without loss of generality we may and can assume that the estimate above holds for all $t\ge R^{\theta'\alpha}$.
Therefore, \eqref{t4-2-1} follows immediately by choosing $R=R_1\vee \rho(0,x)\vee \rho(0,y)$.
Furthermore,
choosing $T_0=R_1^{\theta'\alpha}$ and $R=t^{1/(\theta'\alpha)}$ respectively in the conclusion above, we can get the second assertion.
\end{proof}

\subsection{Off diagonal upper bound estimates}
We first recall the following L\'evy system formula, see e.g.  \cite[Lemma 4.7]{CK1} or \cite[Appendix A]{CK2}.
\begin{lemma}\label{levy}
For any $x\in V$, stopping time $\tau$ $($with respect to the natural filtration of the precess $X$$)$, and non-negative measurable function $f$ on $[0,\infty)\times V\times V$ with $f(s, z, z)=0$ for all $z\in V$ and $s\ge 0$, we have
\begin{equation}\label{e:levy}
\Ee_x \left[\sum_{s\le \tau} f(s,X_{s-}, X_s) \right] = \Ee_x
\left[ \int_0^\tau \left(\sum_{z\in V} f(s,X_s, z) \frac{w_{X_s,z}\mu_z}{\rho(X_s,z)^{d+\alpha}} \right)\, ds \right].
\end{equation}
\end{lemma}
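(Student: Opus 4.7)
The plan is to verify the L\'evy system formula by first reading off the jump kernel of $X$ from the Dirichlet form $(D,\F)$, and then invoking the standard L\'evy system formula for symmetric jump-type Hunt processes. From the Beurling--Deny decomposition applied to \eqref{e1-1}, the form is purely non-local (no local or killing part) with jumping measure
$$J(dx,dy) = \frac{w_{x,y}}{\rho(x,y)^{d+\alpha}}\,\mu_x\,\mu_y$$
off the diagonal of $V\times V$. Hence the L\'evy system of $X$ is $(N(x,dy),\,s)$ with
$$N(x,dy) \;=\; \frac{w_{x,y}\,\mu_y}{\rho(x,y)^{d+\alpha}},$$
and the right-hand side of \eqref{e:levy} is exactly the associated L\'evy-system expression.

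The key technical step I would carry out is to show that for any bounded nonnegative Borel function $f$ on $[0,\infty)\times V\times V$ vanishing on the diagonal and supported in $[0,T]\times B(0,n)\times B(0,n)$,
$$M_t^f \;:=\; \sum_{0<s\le t} f(s,X_{s-},X_s)\;-\;\int_0^t \sum_{z\in V} f(s,X_s,z)\,\frac{w_{X_s,z}\,\mu_z}{\rho(X_s,z)^{d+\alpha}}\, ds$$
is a $\Pp_x$-martingale. In our discrete setting this is classical: the process $X$ is a continuous-time Markov chain on $V$ with jump rates $q(x,z)=\frac{w_{x,z}\mu_z}{\rho(x,z)^{d+\alpha}}$ (read off from the generator $\mathcal L$), so the martingale property of $M^f$ follows from Dynkin's formula applied to the space-time process $(s,X_s)_{s\ge 0}$. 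The summability condition \eqref{eq:condwxy} ensures that the total jump rate out of each state is finite and that all sums and integrals in question are well-defined.

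Next I would apply optional stopping to $M^f$ at $\tau_n := \tau \wedge n \wedge \tau_{B(0,n)}$, take $\Ee_x$, and then let $n\to\infty$; since both the sum $\sum_{s\le\tau_n}f(s,X_{s-},X_s)$ and the predictable integral are nondecreasing in $n$, monotone convergence removes the truncations and yields \eqref{e:levy} for bounded $f$ with bounded spatial support. A further monotone approximation $f_k := (f\wedge k)\,\I_{[0,k]\times B(0,k)\times B(0,k)}$ extends the identity to every nonnegative measurable $f$. The main obstacle is the rigorous passage from the Dirichlet-form identification of the jumping measure to the pathwise L\'evy-system identity in the present non-uniformly elliptic setting; however, this is precisely what is established in \cite[Lemma 4.7]{CK1} and \cite[Appendix A]{CK2}, so the cleanest route (and the one the authors take) is to invoke those references directly rather than reproduce the general theory here.
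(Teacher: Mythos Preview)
Your proposal is correct, and you have accurately identified the paper's approach: the authors do not supply a proof of this lemma at all but merely recall it with the pointer ``see e.g.\ \cite[Lemma~4.7]{CK1} or \cite[Appendix~A]{CK2}''. Your sketch --- reading off the jumping kernel from the Beurling--Deny decomposition of \eqref{e1-1}, establishing the martingale property of $M^f_t$ via the Markov-chain structure (which is guaranteed by \eqref{eq:condwxy}), and then passing to general stopping times and general nonnegative $f$ by optional stopping and monotone convergence --- is exactly the standard argument that those references contain, so there is no substantive difference in approach, only in the level of detail you chose to spell out.
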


\begin{lemma}\label{L:up2}
Suppose that Assumption {\bf (HK2)} holds with constants $\theta\in (0,1)$ and $R_0\ge1$.
Then there exists a constant $R_1\ge1$ such that for all
$R>R_1$, $R^{\theta}\le s \le r/2\le R$, $x_0\in B(0,R)$, $x\in B(x_0, r/2)$, $y \in B(x_0,2r)^c \cap
B(0,R)$ and $t>0$,
\begin{equation}\label{l4-2-1}
\Pp_x(\tau_{B(x_0,r)}\le t, X_{\tau_{B(x_0,r)}}\in B(y,s))\le \frac{C_1 t s^d}{r^{d+\alpha}},
\end{equation}
where $C_1>0$ is independent
of $x_0$, $x$, $R_0$, $R_1$, $R$, $s$ and $t$.
\end{lemma}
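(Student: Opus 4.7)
The plan is to apply the Lévy system formula (Lemma 2.2) to reduce the exit probability to an integral involving the jump kernel, and then use Assumption \textbf{(HK2)} to control the mass of the conductances landing in the target ball $B(y,s)$.

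More concretely, I would first observe that since $s \le r/2$ and $y \in B(x_0, 2r)^c$, any point $z \in B(y,s)$ satisfies $\rho(z, x_0) \ge 2r - s \ge 3r/2 > r$, so $B(y,s) \subset B(x_0, r)^c$. Therefore the event $\{\tau_{B(x_0,r)} \le t,\ X_{\tau_{B(x_0,r)}} \in B(y,s)\}$ is contained in the event that the process makes, before time $t \wedge \tau_{B(x_0,r)}$, a jump from a point in $B(x_0,r)$ directly into $B(y,s)$. Applying \eqref{e:levy} with the stopping time $\tau_{B(x_0,r)}$ and $f(u,z_1,z_2) = \I_{\{u \le t\}}\I_{B(x_0,r)}(z_1)\I_{B(y,s)}(z_2)$, I obtain
\[
\Pp_x\big(\tau_{B(x_0,r)} \le t,\ X_{\tau_{B(x_0,r)}} \in B(y,s)\big)
\le \Ee_x \int_0^{t \wedge \tau_{B(x_0,r)}} \sum_{z \in B(y,s)} \frac{w_{X_u,z}\mu_z}{\rho(X_u,z)^{d+\alpha}}\,du.
\]

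Next, for $u < \tau_{B(x_0,r)}$ I have $X_u \in B(x_0,r)$, and since $y \notin B(x_0,2r)$,
\[
\rho(X_u, z) \ge \rho(X_u, y) - s \ge (\rho(x_0,y) - r) - s \ge 2r - r - r/2 = r/2,
\]
so one may bound $\rho(X_u,z)^{-(d+\alpha)} \le 2^{d+\alpha} r^{-(d+\alpha)}$ uniformly in $z \in B(y,s)$ and $u$. Then I pull this factor out and apply Assumption \textbf{(HK2)}: since $x_0 \in B(0,R)$ and $r \le R$ give $X_u \in B(0,2R) \subset B(0,6R)$, $y \in B(0,R) \subset B(0,6R)$, and the radius $s$ satisfies $R^\theta \le s \le R/2$, which fits the required range $R^\theta/2 \le s \le 2R$ once $R_1$ is chosen at least $R_0$. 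This yields
\[
\sum_{z \in B(y,s)} w_{X_u,z}\mu_z \le C_2 s^d.
\]

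Combining these two estimates gives an integrand bounded uniformly by $2^{d+\alpha}C_2 s^d/r^{d+\alpha}$, and integrating over $[0, t\wedge\tau_{B(x_0,r)}]$ produces the desired bound $C_1 t s^d/r^{d+\alpha}$. The entire argument is essentially a standard Lévy-system calculation; the only non-routine point is making sure the spatial parameters $X_u$, $y$, $s$, $r$ all fall in the range where Assumption \textbf{(HK2)} can be invoked, which is what forces the restriction $R > R_1 := R_0$ and the condition $s \ge R^\theta$. No deeper obstacle is expected.
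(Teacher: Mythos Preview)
Your proposal is correct and follows essentially the same approach as the paper: apply the L\'evy system formula, bound the jump distance from below by $r/2$, and then invoke Assumption \textbf{(HK2)} to control the sum $\sum_{z\in B(y,s)} w_{X_u,z}\mu_z$. One minor slip: the hypothesis gives $r/2\le R$, i.e.\ $r\le 2R$, not $r\le R$, so $X_u\in B(0,3R)$ rather than $B(0,2R)$; but since $B(0,3R)\subset B(0,6R)$ this does not affect the application of \textbf{(HK2)}.
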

\begin{proof}
By  Assumption {\bf (HK2)}, we know that there exists $R_2\ge1$ such that for all
$R>R_2$, $x,y\in B(0,2R)$ and $R^{\theta}\le s \le R$,
\begin{equation}\label{l4-2-2}
\sum_{z\in V: \rho(y,z)\le s}w_{x,z}\mu_z\le c_1s^d.
\end{equation}
Then, according to \eqref{e:levy},
for any $x_0\in B(0,R)$,
$x\in B(x_0, r/2)$ and $y\in B(x_0,2r)^c \cap B(0,R)$, $R^{\theta}\le s \le r/2 \le R$ and $t>0$,
\begin{align*}
\Pp_x(\tau_{B(x_0,r)}\le t, X_{\tau_{B(x_0,r)}}\in B(y,s))
&\le \Ee_x\left[\int_0^{t\wedge \tau_{B(x_0,r)}}\sum_{u\in V:\rho(u,y)\le s}\frac{w_{X_v,u}\mu_u}{\rho(X_v,u)^{d+\alpha}}\,dv\right]\\
&\le c_2t r^{-d-\alpha}\sup_{z,y\in B(0,2R)}\sum_{u\in V:\rho(u,y)\le s}w_{z,u}\mu_u\\
&\le c_3ts^d r^{-d-\alpha},
\end{align*}
where in the second inequality we used the fact that
$$\rho(u,v)\ge \rho(y,v)-\rho(y,u)\ge \rho(y,x_0)-\rho(x_0,v)-\rho(y,u)\ge
r/2$$ for all $v \in B(x_0,r)$ and $u \in B(y,s)$,
and the last inequality is due to \eqref{l4-2-2}.
\end{proof}

\begin{proposition}\label{T:ofde}{\bf(Off diagonal upper bound for the Dirichlet heat kernel)}
Suppose that Assumptions {\bf($d$-Vol)}, {\bf (HK1)} and {\bf (HK2)} hold with $\theta\in (0,1)$ and $R_0\ge1$.
Then, for every $\theta'\in (\theta,1)$, there exists a constant $R_1\ge 1$ such that for all
$R>R_1$, $x,y \in B(0,R)$ and $t\ge R^{\theta'\alpha}$,
\begin{equation}\label{t4-3-1}
p^{B(0,R)}(t,x,y)\le C_1
\left(t^{-d/\alpha}\wedge\frac{t}{\rho(x,y)^{d+\alpha}}\right),
\end{equation}
where $C_1>0$ is independent of $x$, $y$, $R_0$, $R_1$, $R$ and $t$.
\end{proposition}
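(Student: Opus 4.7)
I combine Proposition \ref{p-on} (on-diagonal), Lemma \ref{L:up2} (L\'evy-system exit estimate), and the Dynkin--Hunt formula via a dyadic decomposition and a bootstrap. When $\rho(x,y)\le c\,t^{1/\alpha}$ for a sufficiently large absolute constant $c$, Proposition \ref{p-on} already yields $p^{B(0,R)}(t,x,y)\le Ct^{-d/\alpha}\le C' t/\rho(x,y)^{d+\alpha}$, so I henceforth assume $\rho(x,y)>c\,t^{1/\alpha}$, set $r:=\rho(x,y)/3$, $V:=B(x,r)$, and $U:=B(0,R)$. Since $y\notin V$ one has $p^V(t,x,y)=0$, so the strong Markov property at $\tau_V$ gives
\[
p^U(t,x,y)=\Ee_x\!\left[p^U(t-\tau_V,X_{\tau_V},y)\,\I_{\{\tau_V<t,\,X_{\tau_V}\in U\}}\right].
\]

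The core step is a dyadic decomposition of the landing set around $y$: with $s_k:=2^k t^{1/\alpha}$ and $K$ maximal such that $s_K\le r/2$, split the expectation according to whether $X_{\tau_V}$ lies in $A_0:=B(y,s_0)$, in $A_k:=B(y,s_k)\setminus B(y,s_{k-1})$ for $1\le k\le K$, or in the remote set $W:=U\setminus(V\cup B(y,s_K))$. Provided $R$ is large enough that the relevant $s_k$ lie in the admissible range $[R^\theta,r/2]$, Lemma \ref{L:up2} gives $\Pp_x(\tau_V\le t,X_{\tau_V}\in B(y,s_k))\le Cts_k^d/r^{d+\alpha}$. On $A_0$ one uses the on-diagonal bound $p^U(t-\tau_V,z,y)\le Ct^{-d/\alpha}$ from Proposition \ref{p-on} after restricting to $\tau_V\le t/2$ (the complementary range $\tau_V\in(t/2,t]$ is absorbed by the trivial $p^U\mu_y\le 1$ combined with Lemma \ref{L:up2} once more), giving an $A_0$-contribution at most $C\mu_y t/r^{d+\alpha}$. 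On $A_k$ for $k\ge 1$ one uses an (inductive) off-diagonal bound $p^U(t-\tau_V,z,y)\le Ct/s_{k-1}^{d+\alpha}$ at the strictly smaller distance $s_{k-1}<r$; simplifying $s_k^d/s_{k-1}^{d+\alpha}=2^{d+\alpha-\alpha k}t^{-1}$ yields an $A_k$-contribution of order $C\mu_y t\cdot 2^{-\alpha k}/r^{d+\alpha}$, and geometric summation over $k\ge 1$ produces a total ring contribution $\le C\mu_y t/r^{d+\alpha}$.

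The apparent circularity is resolved by a bootstrap on $M:=\sup\{p^U(t,x',y')\rho(x',y')^{d+\alpha}/t\}$, the supremum taken over admissible triples $(t,x',y')$ with $\rho(x',y')>ct^{1/\alpha}$, $x',y'\in B(0,R)$ and $t\ge R^{\theta'\alpha}$. The remote set $W$ contributes at most $\eta_W\cdot M\mu_y t/r^{d+\alpha}$, where $\eta_W$ is obtained by covering the subset of $W$ with $\rho(x,z)>2r/3$ by balls to which Lemma \ref{L:up2} applies (with target point the center of each covering ball) and integrating $\int_{\rho(x,z)>2r/3}d\mu(z)/\rho(x,z)^{d+\alpha}\le C/r^\alpha$, together with a direct L\'evy-system estimate using Assumption \textbf{(HK1)}(ii) on the near-boundary subset where $\rho(x,X_{\tau_V})\in(r/3,2r/3]$ and Lemma \ref{L:up2} is not directly applicable. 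This yields an inequality of the shape $M\le C_0+\eta M$ with $\eta<1$ once $c$ and, if necessary, a threshold index $k^\ast$ (below which one switches to the on-diagonal bound on the rings, absorbing those rings into $C_0$) are chosen large enough to make the geometric and near-boundary coefficients small. Solving gives $M\le C_0/(1-\eta)$, a constant independent of $R,x,y,t$, as required. The main obstacle is precisely this closure step: handling the near-boundary contribution--where $X_{\tau_V}$ lands just outside $V$ via a small cross-boundary jump and Lemma \ref{L:up2} cannot be invoked at the natural scale--demands a careful application of Assumption \textbf{(HK1)}(ii), while simultaneously one must verify that the standing hypothesis $t\ge R^{\theta'\alpha}$ keeps Proposition \ref{p-on}, Lemma \ref{L:up2}, and the dyadic scales $s_k\ge R^\theta$ all operative throughout the iteration.
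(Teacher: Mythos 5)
Your geometric skeleton (dyadic annuli around $y$, Lemma \ref{L:up2} on the inner rings, the crude exit-probability estimate \eqref{p2-1-2} on the outer region) matches the paper's, which follows the iteration of Barlow--Grigor'yan--Kumagai. But the self-improvement mechanism you propose has two genuine gaps. First, the one-sided strong Markov decomposition from $x$ alone does not let you restrict to $\tau_V\le t/2$. On the complementary event $\{\tau_V\in(t/2,t]\}$ the remaining time $t-\tau_V$ can be arbitrarily small, and in this degenerate setting there is no small-time bound for $p^U$; your proposed absorption via $p^U\le\mu_y^{-1}$ together with Lemma \ref{L:up2} yields $\mu_y^{-1}\cdot Ct\,s_0^d/r^{d+\alpha}=C\mu_y^{-1}t^{1+d/\alpha}/r^{d+\alpha}$, which exceeds the target $Ct/r^{d+\alpha}$ by the unbounded factor $\mu_y^{-1}t^{d/\alpha}$. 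The paper avoids this by working with the symmetrized bilinear estimate \eqref{e:pup1} (i.e.\ \cite[Lemma 2.1]{BGK}), which decomposes from both $x$ and $y$ and guarantees for free that the relevant exit time is at most $t/2$ in each term.

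Second, the closure of your bootstrap is not legitimate as stated. The quantity $M$ appearing on the right-hand side of $M\le C_0+\eta M$ is a supremum over the times $t-\tau_V$, which lie in $[t/2,t]$ and hence range below the time threshold $t\ge R^{\theta'\alpha}$ defining the $M$ on the left; writing $M_{\ge 2}$ for the supremum over $t\ge 2R^{\theta'\alpha}$, what your decomposition actually yields is $M_{\ge 2}\le C_0+\eta\max(M_{\ge 2},M_{[1,2)})$, and $M_{[1,2)}$ is controlled only by the $R$-dependent a priori bound $p^U\le\mu_y^{-1}\le R^{\kappa}$. One cannot simply "solve" for $M$. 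This is precisely why the paper replaces the fixed-point argument by a \emph{finite} induction on the exponent: $(H_{q,N})\Rightarrow(H_{q+\delta,2N})$, where each step invokes the previously \emph{established} bound with its explicit constant (not a supremum of the unknown quantity) at times $t-\tau\ge t/2\ge NR^{\theta'\alpha}$, the exponent is capped at $1+d/\alpha$ after finitely many steps, and the finitely-doubled threshold $2^nNR^{\theta'\alpha}$ is absorbed at the end by enlarging $\theta'$. Your scheme could perhaps be rescued by iterating the inequality $M(2T)\le C_0+\eta M(T)$ in the time cutoff $T$ and using the polynomial-in-$R$ a priori bound, but this requires $\eta$ quantitatively small relative to $\kappa,d,\alpha$ and a further adjustment of $\theta'$, none of which is addressed; as written, the argument does not close. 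A further minor point: the distinction the paper draws between $q<d/\alpha$ and $q>d/\alpha$ (whether $\sum_k\rho_k^{d-\alpha q}$ is dominated by the largest or smallest ring) is what forces the two-stage iteration; your single-shot computation implicitly assumes the $q>d/\alpha$ regime from the outset.
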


\begin{proof} We follow the proof of \cite[Theorem 1.2 (b)$\Rightarrow$ (a)]{BGK} with some required modifications
due to the large scale setting. In the proof below
the constant $c$ will be changed from line to line, and will be independent of
$x$, $y$, $R_0$, $R_1$, $R$ and $t$.

For any $q\in[0,\infty)$ and $N \ge 1$, we call $(H_{q,N})$ as follows:
\begin{itemize} \it
\item[$(H_{q,N})$] there is a constant $c>0$ such that for all $R\ge R_1$,
and $x,y\in B(0,R)$,
$$p^{B(0,R)}(t,x,y)\le ct^{-d/\alpha}\left(\frac{t}{\rho(x,y)^\alpha}\right)^{q},\quad
t\ge  NR^{\theta'\alpha}.$$
\end{itemize}
In particular,  by \eqref{t4-2-1a}, $(H_{0,N})$ holds for all $N\ge 1$. Now, we will prove that
\begin{itemize}
\item [(i)] If $(H_{q,N})$ holds with some $0\le q<d/\alpha$, then $(H_{q+\delta,2N})$ holds for every $\delta\in (0,1/2)$.
\item [(ii)] If $(H_{q,N})$ holds with some $q>d/\alpha$, then $(H_{(1+(d/\alpha))\wedge ( q+\delta),2N})$ holds
for every $\delta\in (0,1/2)$.
\end{itemize}  Suppose that (i) and (ii) hold true. Since the iteration from $q=0$ and $q=1+(d/\alpha)$ only takes finite times, we can get  \eqref{t4-3-1} by taking $\theta'$ a little bit larger.
In the following, we will prove (i) and (ii) respectively.

{\bf Step (1)} We assume that $(H_{q,N})$ holds with $0\le q<d/\alpha$. Let $t\ge 2NR^{\theta'\alpha}$,
and $x,y \in B(0,R)$. If $\rho(x,y)\le 8t^{1/\alpha}$, then, by $(H_{0,N})$, $(H_{q+\delta,2N})$ holds for every $\delta\in (0,1/2)$.
Next, we suppose that $\rho(x,y)> 8t^{1/\alpha}$. Set $\rho_0=t^{1/\alpha}$ and $r=\rho(x,y)/2$, so that $r>4\rho_0$.
Applying \cite[Lemma 2.1]{BGK} to the Dirichlet semigroup $(P^{B(0,R)}_t)_{t\ge0}$ with $U=B(x,r)\cap B(0,R)$ and $V=B(y,r)\cap B(0,R)$, we obtain that for all non-negative measurable functions $f$ and $g$ on $V$ with supports contained in $B(0,R)$,
\begin{equation}\label{e:pup1}\begin{split}
\langle P_t^{B(0,R)}f, g\rangle\le &\langle
\Ee_{\cdot} \big[\I_{\{\tau_{B(x,r)}\le t/2\}} P_{t-\tau_{B(x,r)}}^{B(0,R)}f(X_{\tau_{B(x,r)}})\big],g\rangle\\
&+\langle \Ee_{\cdot} \big[\I_{\{\tau_{B(y,r)}\le t/2\}} P_{t-\tau_{B(y,r)}}^{B(0,R)}g(X_{\tau_{B(y,r)}})\big],f\rangle,
\end{split}
\end{equation}
where $\langle \cdot, \cdot \rangle$ denotes the inner product on $L^2(V;\mu)$.
Let $f$ be supported in $B(y,\rho_0)\cap B(0,R)$ and $g$ be supported in $B(x,\rho_0)\cap B(0,R)$. Then, it holds that
\begin{align*}
&\langle \Ee_{\cdot}\big[ \I_{\{\tau_{B(x,r)}\le t/2\}} P_{t-\tau_{B(x,r)}}^{B(0,R)}f(X_{\tau_{B(x,r)}})\big],g\rangle\\
&=\sum_{z\in B(x,\rho_0)\cap B(0,R)} \Ee_z\big[ \I_{\{\tau_{B(x,r)}\le t/2\}} P_{t-\tau_{B(x,r)}}^{B(0,R)}f(X_{\tau_{B(x,r)}})\big] g(z)\mu_z.
\end{align*}
A similar equality holds for the second term in  the right hand side of \eqref{e:pup1}.

Below, we write $\tau=\tau_{B(x,r)}$ and $B={B(0,R)}$ for simplicity.
Set $\rho_k=2^k\rho_0$ for $k\ge1$, and consider the annuli
$$
A_1:=B(y,\rho_1), \quad A_k:=B(y,\rho_k)\backslash B(y,\rho_{k-1}),\quad k\ge 2.
$$
Then, for every $z \in B(x,\rho_0)$,
\begin{align*}  &\Ee_z \left[\I_{\{\tau\le t/2\}} P^{B}_{t-\tau}f(X_{\tau})\right]=\sum_{k=1}^\infty
\Ee_z \left[\I_{\{\tau\le t/2\}} P^{B}_{t-\tau}f(X_{\tau})\I_{\{X_{\tau}\in A_k\}}\right]=:\sum_{k=1}^\infty E_k. \end{align*}

For $k\ge2$, note that if $X_\tau\in A_k$, then $\rho(X_{\tau},y)\ge \rho_{k-1}.$ So, for all $v\in B(y,\rho_0)$,
$$\rho(X_\tau,v)\ge \rho_{k-1}-\rho_0\ge \frac{1}{2}\rho_{k-1}= \frac{1}{4} \rho_{k}.$$
Recall that, for $\tau\le t/2$ and $t\ge 2NR^{\theta'\alpha}$, it holds that $NR^{\theta'\alpha}\le t/2\le t-\tau$.
Hence, by $(H_{q,N})$, if $X_{\tau}\in A_k$ for all
$k\ge 2$, then
$$
P_{t-\tau}^{B} f(X_\tau)=\sum_{v\in B(y,\rho_0)}p^{B}(t-\tau,X_\tau,v)f(v)\mu_v\le
ct^{-d/\alpha}\left( \frac{t}{\rho_k^\alpha}\right)^q\|f\|_1.$$
According to $(H_{0,N})$ and the fact that $\rho_1=2t^{1/\alpha}$, it is easy to see the inequality above also holds true
for $k=1$.

Next, we separately estimate the terms with $\rho_k>r/2$ and with $\rho_k\le r/2$.
Using the facts that $\rho_0<r/4$, $t/2\ge NR^{\theta'\alpha}$ and $r=\rho(x,y)/2> 4t^{1/\alpha}\ge 4 R^{\theta'\alpha}$,
we obtain from \eqref{p2-1-2} that for all $z\in B(x,\rho_0)\cap B(0,R)\subseteq B(0,R)$ and $\delta\in(0,1/2)$,
$$\Pp_z(\tau\le t/2)\le \Pp_z(\tau_{B(z,r/2)}\le t/2)\le c\Big(\frac{t}{r^\alpha}\Big)^{\delta}.$$
Hence, according to all these estimates above, for any $q>0$,
\begin{align*}
\sum_{k: 2\rho_k>r}E_k&\le\!\! c\sum_{k: 2\rho_k>r} \Pp_z(\tau\le t/2) t^{-d/\alpha}\left( \frac{t}{\rho_k^\alpha}\right)^q\|f\|_1\le\!\!  c\sum_{k: 2\rho_k>r} \Big(\frac{t}{r^\alpha}\Big)^{\delta} t^{-d/\alpha}\left( \frac{t}{\rho_k^\alpha}\right)^q\|f\|_1\\
&\le c\Big(\frac{t}{r^\alpha}\Big)^{\delta} t^{-d/\alpha}\left( \frac{t}{r^\alpha}\right)^q\|f\|_1\le ct^{-d/\alpha}\left( \frac{t}{r^\alpha}\right)^{q+\delta}\|f\|_1.
\end{align*} When $q=0$,
$$
\sum_{k: 2\rho_k>r}E_k\le ct^{-d/\alpha}\|f\|_1\Pp_z(\tau\le t/2)\le c t^{-d/\alpha}\left( \frac{t}{r^\alpha}\right)^{\delta}\|f\|_1.
$$

On the other hand, for $2\rho_k\le r$,  it holds that $2N^{1/\alpha}R^{\theta'}\le \rho_1\le \rho_k\le r/2\le R$.
Then, by  \eqref{l4-2-1},
for all $k\ge 1$ and $z \in B(x,\rho_0)\subseteq B(x,r/2)$,
$$
\Pp_z (\tau\le t/2, X_{\tau}\in A_k)\le \frac{ct\rho_k^d}{r^{d+\alpha}}.
$$
Combining this with
$(H_{q,N})$ yields
\begin{align*}
\sum_{{k}: 2\rho_k\le r} E_k &\le c\sum_{k: 2\rho_k\le r}
\Pp_z(\tau\le t/2, X_\tau\in B(y,\rho_k))
t^{-d/\alpha}\left( \frac{t}{\rho_k^\alpha}\right)^q\|f\|_1\\
&\le  c\sum_{k: 2\rho_k\le r} \frac{t\rho_k^d}{r^{d+\alpha}}
\frac{1}{t^{d/\alpha}}\left( \frac{t}{\rho_k^\alpha}\right)^q\|f\|_1\le ct^{-d/\alpha} \frac{t^{1+q}}{r^{d+\alpha}}\|f\|_1\sum_{k: 2\rho_k\le r} \rho_k^{d-\alpha q} \\
&\le c t^{-d/\alpha}\left( \frac{t}{r^\alpha}\right)^{1+q}\|f\|_1,
\end{align*} where in the last inequality we used the fact that
$\sum_{k: 2\rho_k\le r} \rho_k^{d-\alpha q}\le cr^{d-\alpha q}$
due to $q<d/\alpha$.

Thus, according to all the estimates above, we obtain that for any $R>R_1$, $\delta\in (0,1/2)$,
$t\ge2NR^{\theta'\alpha}$, $r>4t^{1/\alpha}$ and $z\in B(x,\rho_0)\cap B(0,R)$,
$$\Ee_z\left[\I_{\{\tau\le t/2\}} P^{B}_{t-\tau}f(X_{\tau})\right] \le c t^{-d/\alpha}
\left( \frac{t}{r^\alpha}\right)^{q+\delta}\|f\|_1$$
and so
$$\langle \Ee_{\cdot} \I_{\{\tau\le t/2\}} P_{t-\tau}^{B}f(X_{\tau}),g\rangle\le
c t^{-d/\alpha}\left( \frac{t}{r^\alpha}\right)^{q+\delta}\|f\|_1\|g\|_1.$$
Estimating similarly the second term in the right hand side of \eqref{e:pup1}, we finally get that
for all $R>R_1$, $\delta\in (0,1/2)$, $r>4t^{1/\alpha}$ and $t\ge2NR^{\theta'\alpha}$,
$$\langle P_t^{B}f, g\rangle\le ct^{-d/\alpha}
\left( \frac{t}{r^\alpha}\right)^{q+\delta}\|f\|_1\|g\|_1,$$
which yields that $(H_{q+\delta,2N})$ holds. So (i) has been shown.

Now we turn to (ii). Similarly, it suffices to consider the case $r>4t^{1/\alpha}$. Suppose that $(H_{q,N})$ holds for some $q>d/\alpha$ and $N\ge 1$. Then, following the argument above and carefully tracking
the constants, we arrive at that for all $R>R_1$, $\delta\in (0,1/2)$, $r>4t^{1/\alpha}$ and $t\ge 2NR^{\theta'\alpha}$,
$$\sum_{k: 2\rho_k>r} E_k\le  c t^{-d/\alpha}
\left( \frac{t}{r^\alpha}\right)^{q+\delta}\|f\|_1$$ and
$$\sum_{k: 2\rho_k\le r} E_k\le
ct^{-d/\alpha}\frac{t^{1+q}}{r^{d+\alpha}}\|f\|_1\sum_{k: 2\rho_k\le r} \rho_k^{d-\alpha q}
\le ct^{-d/\alpha}
\left( \frac{t}{r^\alpha}\right)^{1+(d/\alpha)}\|f\|_1,
$$
where in the last inequality we used the fact
$\sum_{k: 2\rho_k\le r} \rho_k^{d-\alpha q}\le c\rho_0^{d-\alpha q}\le ct^{d/\alpha-q}$, thanks to $q>d/\alpha$.
These estimates together imply that when $q>d/\alpha$, for all $R>R_1$, $\delta\in (0,1/2)$, $r>4t^{1/\alpha}$ and $t\ge2NR^{\theta'\alpha}$,
$$\langle \Ee_{\cdot} \I_{\{\tau\le t/2\}} P_{t-\tau}^{B}f(X_{\tau}),g\rangle\le
ct^{-d/\alpha}
\left( \frac{t}{r^\alpha}\right)^{(1+(d/\alpha))\wedge ( q+\delta)}\|f\|_1\|g\|_1.$$
To estimate the second term in the right hand side of \eqref{e:pup1} similarly, we know that for all $R>R_1$, $\delta\in (0,1/2)$, $r>4t^{1/\alpha}$ and $t\ge 2NR^{\theta'\alpha}$,
$$\langle P_t^{B}f, g\rangle\le
ct^{-d/\alpha}
\left( \frac{t}{r^\alpha}\right)^{(1+(d/\alpha))\wedge ( q+\delta)}\|f\|_1\|g\|_1.$$
So $(H_{(1+(d/\alpha))\wedge ( q+\delta)),2N})$ holds. Thus, we prove (ii) and so the proof is complete.
\end{proof}

By using Proposition \ref{T:ofde}, we can establish the following off diagonal upper bounds for heat kernel $p(t,x,y)$.

 \begin{proposition}{\bf (Off diagonal upper bounds for the heat kernel)}\label{T:ofe}
Suppose that Assumptions {\bf($d$-Vol)}, {\bf (HK1)} and {\bf (HK2)} hold with $\theta\in (0,
{\alpha}/({2d+\alpha}))$ and $R_0\ge 1$.
Then, for every $\theta'\in (\theta,{\alpha}/({2d+\alpha}))$, there is a constant
$R_1\ge1$ such that for any
$x,y \in V$ and $t>0$ with $$\rho(x,y)\ge (R_1\vee \rho(0,x)\vee \rho(0,y))^{{\alpha(1+\theta')}/({2(d+\alpha)})}$$ and
$$\rho(x,y)^{2\theta'(d+\alpha)/({1+\theta'})}\le t \le \rho(x,y)^{2(d+\alpha)}(R_1\vee \rho(0,x)\vee \rho(0,y))^{-\alpha},$$ we have
\begin{equation}\label{t4-4-1}
p(t,x,y)\le C_1(1\vee \mu_y^{-1})\frac{t}{\rho(x,y)^{d+\alpha}},
\end{equation}
where $C_1>0$ is a positive constant
independent of  $R_0$, $R_1$, $R$,  $x$, $y$ and $t$.
\end{proposition}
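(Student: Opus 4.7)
The strategy is to apply the Dynkin-Hunt formula with a localization ball $B(0,N)$ whose radius is tuned precisely to the given parameters $r := \rho(x,y)$, $t$, and $R := R_1 \vee \rho(0,x) \vee \rho(0,y)$. This is the off-diagonal analogue of the Dynkin-Hunt reduction used in Proposition \ref{t4-2}, but now the interior estimate is supplied by the freshly established Proposition \ref{T:ofde} rather than the on-diagonal bound. The natural choice is
$$N = 2\bigl(r^{2(d+\alpha)}/t\bigr)^{1/\alpha}.$$
The upper bound $t \le r^{2(d+\alpha)} R^{-\alpha}$ in the hypothesis is precisely $N \ge 2R$, which guarantees $x,y \in B(0,N)$ and $B(x, N/2) \subset B(0,N)$. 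The lower bound $t \ge r^{2\theta'(d+\alpha)/(1+\theta')}$ is, after rearrangement, equivalent (up to a harmless constant factor) to $t \ge N^{\theta'\alpha}$, which is exactly the threshold required by both Proposition \ref{T:ofde} and the exit-time estimate \eqref{p2-1-2}; this is also why the asserted range of $t$ is non-empty exactly under the additional assumption $r \ge R^{\alpha(1+\theta')/(2(d+\alpha))}$.

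By Dynkin-Hunt,
$$p(t,x,y) = p^{B(0,N)}(t,x,y) + \Ee_x\bigl[p\bigl(t-\tau_{B(0,N)}, X_{\tau_{B(0,N)}}, y\bigr) \I_{\{t>\tau_{B(0,N)}\}}\bigr] =: J_1(t) + J_2(t).$$
For $J_1$, Proposition \ref{T:ofde} applied on $B(0,N)$ with $t \ge N^{\theta'\alpha}$ immediately yields $J_1(t) \le C t/\rho(x,y)^{d+\alpha}$. For $J_2$, I use the trivial pointwise bound $p(s,z,y) \le \mu_y^{-1}$ together with $B(x,N/2) \subset B(0,N)$ to reduce to a tail probability:
$$J_2(t) \le \mu_y^{-1} \Pp_x\bigl(\tau_{B(0,N)} \le t\bigr) \le \mu_y^{-1} \Pp_x\bigl(\tau_{B(x,N/2)} \le t\bigr).$$
Invoking \eqref{p2-1-2} (applicable because $t \ge (N/2)^{\theta'\alpha}$ up to an absorbed constant) produces a factor $(t/(N/2)^\alpha)^{1/2}$ times a logarithmic correction. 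By design, the square-root factor evaluates to exactly $t/r^{d+\alpha}$, so the leading behaviour matches the target.

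The principal (if mild) obstacle is the residual factor $1 \vee \log((N/2)^\alpha/t) \sim \log r$ coming from \eqref{p2-1-2}; as written, this produces an unwanted $\log r$ correction in the final bound. I plan to absorb it by mildly perturbing the radius, taking $N = 2(r^{2(d+\alpha)}/t)^{1/\alpha} r^\eta$ for a small $\eta > 0$: this strengthens the $r$-decay of $J_2$ from $r^{-(d+\alpha)}$ to $r^{-(d+\alpha)-\alpha\eta/2}$, which, via $\log u \le C_\beta u^\beta$, dominates $\log r$; the concomitant tightening of the admissible range of $t$ is then compensated by shrinking the open interval for $\theta'$, a standard device used throughout the paper. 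Summing the contributions of $J_1$ and $J_2$ gives $p(t,x,y) \le C_1 t/\rho(x,y)^{d+\alpha} + C_1 \mu_y^{-1} t/\rho(x,y)^{d+\alpha} \le 2 C_1 (1 \vee \mu_y^{-1}) t/\rho(x,y)^{d+\alpha}$, which is the desired bound \eqref{t4-4-1}.
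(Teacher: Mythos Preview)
Your proposal is correct and follows essentially the same route as the paper: Dynkin--Hunt with a ball $B(0,N)$, the Dirichlet off-diagonal bound from Proposition~\ref{T:ofde} for $J_1$, and the exit-time tail \eqref{p2-1-2} for $J_2$, with $N$ chosen of order $(r^{2(d+\alpha)}/t)^{1/\alpha}$ and then perturbed to kill the logarithm. The only cosmetic difference is in how the log is absorbed: the paper writes \eqref{p2-1-2} as $\Pp_x(\tau_{B(x,N/2)}\le t)\le c\,(t/N^\alpha)^{(1/2)-\varepsilon}$ for small $\varepsilon>0$ (folding the log into the exponent) and then sets $N=[t^{-1/(\alpha(1-2\varepsilon))}\rho(x,y)^{2(d+\alpha)/(\alpha(1-2\varepsilon))}]$, whereas you keep the $1/2$ and inflate $N$ by a factor $r^\eta$; both perturbations accomplish the same thing and both are compensated by moving $\theta'$ slightly inside the open interval.
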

\begin{proof}
Similar to the proof of Proposition \ref{t4-2}, we apply the Dynkin-Hunt formula and obtain that for every $N>R\ge1$ and $x,y \in B(0,R)$,
\begin{align*}
p(t,x,y)&=p^{B(0,N)}(t,x,y)+
\Ee_x\big[p\big(t-\tau_{B(0,N)},X_{\tau_{B(0,N)}},y\big)\I_{\{t>\tau_{B(0,N)}\}}\big]\\
&=:J_{1,N}(t)+J_{2,N}(t).
\end{align*}
According to \eqref{t4-3-1} and \eqref{p2-1-2}, for any $\varepsilon\in(0,1/2)$ and $\theta_1\in (\theta,\theta')$, there exists a constant $R_1\ge1$ such that for every
$N\ge 2R>2R_1$, $x, y\in B(0,R)$ and $t\ge N^{\theta_1\alpha}$,
\begin{equation}\label{t4-4-2}
J_{1,N}(t)\le \frac{c_1t}{\rho(x,y)^{d+\alpha}}\end{equation} and
\begin{equation}\label{t4-4-22}
J_{2,N}(t)\le \mu_y^{-1}\Pp_x (\tau_{B(0,N)}<t)\le
\mu_y^{-1}\Pp_x (\tau_{B(x,N/2)}<t)\le c_2\mu_y^{-1}(tN^{-\alpha})^{(1/2)-\varepsilon},
\end{equation} where in the inequality above we used again the facts that $p(t,x,y)\le \mu_y^{-1}$ for all $x,y\in V$, and  $B(x,N/2)\subset B(0,N)$ for any $x \in B(0,R)$ with $2R\le N$.

Set $N(t,x,y):=\left[t^{-1/({\alpha(1-2\varepsilon)})}\rho(x,y)^{{2(d+\alpha)}/(
{\alpha(1-2\varepsilon)})}\right].$  Let $\theta'\in (\theta,{\alpha}/({2d+\alpha}))$ and  $\theta_1\in (\theta,\theta')$. For any
$x,y\in B(0,R)$  and $t>0$ with $\rho(x,y)\ge (3R)^{{\alpha(1+\theta')}/({2(d+\alpha)})}$ and
$$\rho(x,y)^{2\theta'(d+\alpha){}/({1+\theta'})}\le t \le \rho(x,y)^{2(d+\alpha)}(3R)^{-\alpha},$$
we can choose $\varepsilon>0$ such that
$$\frac{\theta'}{1+\theta'}>\frac{\theta_1}{1-2\varepsilon +\theta_1},$$ and so $N(t,x,y)\ge 2R$ and $t\ge  N(t,x,y)^{\theta_1\alpha}$ for $R$ large enough.
Note that $\rho(x,y)\ge (3R)^{{\alpha(1+\theta')}/({2(d+\alpha)})}$ implies
$\rho(x,y)^{2(d+\alpha)}(3R)^{-\alpha}\ge \rho(x,y)^{2\theta'(d+\alpha){}/({1+\theta'})}$.
Then, applying $N=N(t,x,y)$ into \eqref{t4-4-2} and \eqref{t4-4-22}, we can obtain that for any
$x,y\in B(0,R)$ and $t>0$ with $\rho(x,y)\ge (3R)^{{\alpha(1+\theta')}/({2(d+\alpha)})}$ and
$\rho(x,y)^{2\theta'(d+\alpha){}/({1+\theta'})}\le t \le \rho(x,y)^{2(d+\alpha)}(3R)^{-\alpha},$ it holds
$$
p(t,x,y) \le J_{1,N(t,x,y)}(t)+J_{2,N(t,x,y)}(t)\le   \frac{c_3(1\vee \mu_y^{-1})t}{\rho(x,y)^{d+\alpha}},
$$ where in the last inequality we used the fact that
$$N\ge t^{-({1+2\varepsilon})/({\alpha(1-2\varepsilon)})}\rho(x,y)^{{2(d+\alpha)}/
{(\alpha(1-2\varepsilon))}}.$$ Therefore, we prove the second desired estimates in \eqref{t4-4-1} by taking
$R= R_1\vee \rho(0,x)\vee \rho(0,y)$ and $\theta'$ a little bit larger.
\end{proof}

Finally, according to Propositions \ref{t4-2} and \ref{T:ofe}, we can
summarize the
following upper bound for the heat kernel $p(t,x,y)$.
\begin{theorem}{\bf  (Upper bound for the heat kernel)}\label{t2-2}
Suppose Assumptions {\bf($d$-Vol)}, {\bf (HK1)} and {\bf (HK2)} hold with $\theta\in (0,
 {\alpha}/({2d+\alpha}))$ and $R_0\ge1$. Then, for every
$\theta'\in (\theta_0,1)$ with $\theta_0:=\max\{2\theta(d+\alpha)/(\alpha({1+\theta})),  {\alpha}/({2d+\alpha})\}$,
 there is a constant
$R_1\ge1$ such that for any $R>R_1$, $x,y\in V$ with
$t>(R_1\vee \rho(0,x)\vee \rho(0,y))^{\theta'\alpha}$,
\begin{equation}\label{t2-2-1}
p(t,x,y)\le C_1(1\vee \mu_y^{-1})\Big(t^{-d/\alpha}\wedge\frac{t}{\rho(x,y)^{d+\alpha}}\Big),
\end{equation}
where $C_1>0$ is
independent of $R_0$, $R_1$, $R$, $x$, $y$ and $t$.
\end{theorem}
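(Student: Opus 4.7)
The statement is a consolidation of Proposition~\ref{t4-2} (on-diagonal bound) and Proposition~\ref{T:ofe} (off-diagonal bound), selected according to whether $\rho(x,y)^\alpha \le t$ or $\rho(x,y)^\alpha > t$. Throughout, write $M:=R_1\vee \rho(0,x)\vee \rho(0,y)$; the triangle inequality gives $\rho(x,y)\le 2M$, and the standing assumption $t>M^{\theta'\alpha}$ together with $\theta'>\alpha/(2d+\alpha)$ gives $t\ge M^{\alpha^2/(2d+\alpha)}$, a fact we shall use repeatedly.

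\emph{On-diagonal regime ($\rho(x,y)^\alpha \le t$).} In this range $t^{-d/\alpha}\le t/\rho(x,y)^{d+\alpha}$, so the minimum in \eqref{t2-2-1} is $t^{-d/\alpha}$. Because $\theta_0\ge \alpha/(2d+\alpha)$, for $\theta'\in(\theta_0,1)$ I pick an inner exponent $\theta''\in(\theta,\alpha/(2d+\alpha))$ with $\theta''<\theta'$; then $t>M^{\theta'\alpha}\ge M^{\theta''\alpha}$ since $M\ge 1$, and Proposition~\ref{t4-2} applied with $\theta''$ yields the desired bound.

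\emph{Off-diagonal regime ($\rho(x,y)^\alpha > t$).} Here $t/\rho(x,y)^{d+\alpha}\le t^{-d/\alpha}$, and I invoke Proposition~\ref{T:ofe}. The plan is to pick an inner parameter $\theta''\in(\theta,\alpha/(2d+\alpha))$ close enough to $\theta$ and verify its three hypotheses with base point $0$: namely, $\rho(x,y)\ge M^{\alpha(1+\theta'')/(2(d+\alpha))}$, $\rho(x,y)^{2\theta''(d+\alpha)/(1+\theta'')}\le t$, and $t\le \rho(x,y)^{2(d+\alpha)}M^{-\alpha}$. The regime assumption $\rho(x,y)^\alpha>t>M^{\theta'\alpha}$ gives $\rho(x,y)>M^{\theta'}$, so the first inequality holds as long as $\theta'\ge \alpha(1+\theta'')/(2(d+\alpha))$. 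The upper bound $\rho(x,y)\le 2M$ combined with $t>M^{\theta'\alpha}$ secures the lower bound on $t$ whenever $\theta'\alpha\ge 2\theta''(d+\alpha)/(1+\theta'')$. For the upper bound on $t$, I use $t\le \rho(x,y)^\alpha$ to reduce the inequality to $\rho(x,y)^{2d+\alpha}\ge M^\alpha$, which follows from $\rho(x,y)>M^{\theta'}$ and $\theta'>\alpha/(2d+\alpha)$.

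Letting $\theta''\downarrow\theta$, the two non-trivial thresholds become $\theta'\ge \alpha(1+\theta)/(2(d+\alpha))$ and $\theta'\ge 2\theta(d+\alpha)/(\alpha(1+\theta))$. A short calculation (using the standing assumption $\theta<\alpha/(2d+\alpha)$) shows that $\alpha(1+\theta)/(2(d+\alpha))<\alpha/(2d+\alpha)$, so the former is absorbed by the requirement $\theta'>\alpha/(2d+\alpha)$. Consequently the binding conditions collapse exactly to $\theta'>\max\{2\theta(d+\alpha)/(\alpha(1+\theta)),\,\alpha/(2d+\alpha)\}=\theta_0$, which is the hypothesis of the theorem. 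No essential obstacle arises: the entire argument is parameter bookkeeping, and the precise definition of $\theta_0$ is engineered so that both auxiliary parameters required by Propositions~\ref{t4-2} and~\ref{T:ofe} can be simultaneously extracted from $\theta'>\theta_0$.
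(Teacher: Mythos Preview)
Your proposal is correct and follows essentially the same route as the paper's proof: both combine Proposition~\ref{t4-2} and Proposition~\ref{T:ofe} by verifying that the hypotheses of the latter can be met with an auxiliary parameter $\theta''\in(\theta,\alpha/(2d+\alpha))$, and both arrive at the same threshold $\theta_0$. The only cosmetic difference is the case split: the paper first distinguishes $\rho(x,y)\ge D(x,y)^{\alpha/(2d+\alpha)}$ from $\rho(x,y)< D(x,y)^{\alpha/(2d+\alpha)}$ and chooses $\theta_1$ so that $\theta'=2\theta_1(d+\alpha)/(\alpha(1+\theta_1))$ exactly, whereas you split directly on $\rho(x,y)^\alpha$ versus $t$ and take $\theta''$ close to $\theta$; the verifications are otherwise identical (including the absorption of the factor $2$ in $\rho(x,y)\le 2M$ by enlarging $R_1$).
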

\begin{proof}
For any $x,y\in V$, let $D(x,y)=R_1\vee \rho(0,x)\vee \rho(0,y)$.
We first consider the case that $\rho(x,y)\ge D(x,y)^{{\alpha}/({2d+\alpha})}$.
 Below, we set $\theta'_0= 2\theta(d+\alpha)/(\alpha({1+\theta}))$. Note that, for any $\theta'\in (\theta'_0,1)$, we can find $\theta_1\in (\theta, {\alpha}/({2d+\alpha}))$ such that
$\theta'=2\theta_1(d+\alpha)/(\alpha({1+\theta_1}))$.
It follows from the facts $\theta_1< {\alpha}/({2d+\alpha})$ and
$ D(x,y)^{{\alpha}/({2d+\alpha})}\le \rho(x,y)\le 2D(x,y)$ that
$
\rho(x,y)\ge D(x,y)^{{\alpha(1+\theta_1)}/({2(d+\alpha)})},
$
and $$
\rho(x,y)^{2\theta_1(d+\alpha)/({1+\theta_1})}=
\rho(x,y)^{\theta'\alpha}\le (2D(x,y))^{\theta'\alpha},
$$ as well as
$$\rho(x,y)^{2(d+\alpha)}D(x,y)^{-\alpha}\ge \rho(x,y)^{\alpha}.$$ These along with
\eqref{t4-4-1} yield that for any $D(x,y)^{\theta'\alpha}\le t \le \rho(x,y)^{\alpha}$ (increasing $\theta'$ a little larger if necessary),
$$
p(t,x,y)\le \frac{c_2(1\vee \mu_y^{-1})t}{\rho(x,y)^{d+\alpha}}.
$$
On the other hand, note that $\theta<\theta'_0<\theta'$.
According to
\eqref{t4-2-1},
\begin{equation}\label{t2-2-2}
p(t,x,y)\le c_3(1\vee \mu_y^{-1})t^{-d/\alpha},\quad t>D(x,y)^{\theta'\alpha}.
\end{equation}
Combing both estimates above yields \eqref{t2-2-1} for the case that $\rho(x,y)\ge D(x,y)^{{\alpha}/({2d+\alpha})}$.

Next, we consider the case that $\rho(x,y)\le D(x,y)^{{\alpha}/({2d+\alpha})}$.  Since $\theta'>\theta_0\ge \alpha/(2d+\alpha)$, it holds that
\begin{equation*}
\rho(x,y)\le D(x,y)^{{\alpha}/({2d+\alpha})} \le D(x,y)^{\theta'}.
\end{equation*}
Hence, \eqref{t2-2-1} follows from \eqref{t2-2-2} for the case that $\rho(x,y)\le D(x,y)^{{\alpha}/({2d+\alpha})}$.
Therefore, we prove the desired assertion.
\end{proof}
\begin{remark} According to the proofs above, the uniform pointwise upper bound \eqref{e:a1-1} of $\mu_x$ is only used to derive
Theorem \ref{p2-1}, see \cite[Page 13, line 7-8]{CKW} for the argument of the assertion that $M(t)\ge 1/2$.
In fact, for this assertion \eqref{e:a1-1} in Assumption ($d$-{\bf Vol}) can be replaced by the condition that \emph{there exists a constant $R_0\ge1$ such that
$$\sup_{x\in B(0,R)}\mu_x\le c_\mu R^{\theta d},\quad \forall R\ge R_0,$$
where $\theta\in (0, {\alpha}/({2d+\alpha}))$ is the constant in Assumption {\bf (HK1)}, and
$c_\mu>0$ is independent of $R_0$ and $R$}. Different from nearest neighbor models, some priori estimates of heat kernel (see e.g.\ \cite{Bar}) are not available, and the maximum principle (see e.g. \cite{BChen}) does not work in our setting. We believe that some kind of upper bounds for $\mu_x$ are required. On the other hand, we also note that, similar to nearest neighbor models (see \cite[(1.5)]{BChen} and \cite[Assumption 1.1(v)]{BKM}),
we need some control on the lower bounds of $\mu_x$, see \eqref{e:a1-2}.
\end{remark}
\subsection{Lower bounds for the heat kernel estimates}\label{section2.3}
Let $Z:=(Z_t)_{t\ge0}=(U_t,$ $X_t)_{t\ge0}$ be the time-space process such
that $U_t=U_0+t$ for any $t\ge0$.
We say that a measurable function $q(t,x)$ on
$[0,\infty)\times V$ is parabolic in an open subset $A$ of
$[0,\infty)\times V$, if for every relatively compact open subset
$A_1$ of $A$, $q(t,x)=\Ee^{(t,x)}q(Z_{\tau_{A_1}})$ for every
$(t,x)\in A_1$.

Let $C_0>0$ be the constant in \eqref{p2-1-1}. For
every $t\ge0$, $R\ge1$ and $x,y\in V$, set $Q(t,x,R)=\left(t, t+ C_0R^{\alpha}\right)\times B(x,R)$.

 \begin{theorem}
 \label{p2-2}
Suppose that Assumptions {\bf($d$-Vol)} and {\bf (HK1)} hold with $\theta \in (0,1)$ and
$R_0\ge1$.
Then, there exist
constants $\delta \in (\theta,1)$ and $R_1\ge1$ such that for all
  $R>R_1$, $x_0 \in B(0,R)$, $R^{\delta}\le r \le R$, $t_0\ge0$ and parabolic function $q$
 on $Q(t_0,x_0, 2r)$,
 \begin{equation}\label{p2-2-1}
 |q(s,x)-q(t,y)|\le C_1\|q\|_{\infty,r}\left(\frac{|t-s|^{1/\alpha}+\rho(x,y)}{r} \right)^\beta,
 \end{equation}
 holds for all $(s,x),(t,y)\in Q(t_0,x_0, r)$ such that
 $(C_0^{-1}|s-t|)^{1/\alpha}+\rho(x,y)\ge 2r^{\delta}$, where
 $\|q\|_{\infty,r}=\sup_{(s,x)\in [t_0, t_0+C_0(2r)^\alpha]\times V}q(s,x),$
and $C_1>0$ and $\beta\in (0,1)$ are constants independent of $R_0$, $R_1$,
$x_0$, $t_0$, $R$, $r$, $s$, $t$, $x$ and $y$.
\end{theorem}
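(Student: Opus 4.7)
The plan is to adapt the classical oscillation-reduction argument (in the style of Bass--Levin and Chen--Kumagai--Wang) and extend \cite[Theorem~3.8]{CKW} from the global $d$-set setting to the large-scale $d$-set setting of Assumption {\bf($d$-Vol)}. Since the statement is essentially the same as in \cite{CKW} but with the scale restriction $r\ge R^{\delta}$, the structural outline is the standard three-step argument; the novelty lies in ensuring that every step stays inside the regime $[R^{\delta},R]$ where our assumptions are effective.

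\medskip

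\textbf{Step 1 (Hitting probability on annuli).} First I would establish the following estimate: there exist $\delta\in (\theta,1)$, $R_1\ge 1$ and $\eta_0>0$ such that for every $R>R_1$, every $x_0\in B(0,R)$, every $R^{\delta}\le r\le R$, every $x\in B(x_0,r/4)$, and every $A\subset B(x_0,r/2)$ with $\mu(A)\ge \tfrac{1}{4}\mu(B(x_0,r/2))$,
\begin{equation*}
\Pp_x\bigl(\sigma_A\le \tau_{B(x_0,r)}\wedge C_0 r^{\alpha}\bigr)\ge \eta_0,
\end{equation*}
where $\sigma_A$ denotes the hitting time of $A$. The argument uses the L\'evy system formula (Lemma \ref{levy}) to bound from below the jump intensity from a point $X_s\in B(x_0,r)$ into $A$, invoking the density condition $\mu(B_z^w(x,r))\ge c_0\mu(B(x,r))$ from Assumption {\bf(HK1)}(i) and the $d$-set estimate \eqref{a2-2}. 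Combined with the lower bound $\Ee_x[\tau_{B(x_0,r)}]\ge C_2 r^{\alpha}$ in \eqref{l2-2-1}, one obtains the claimed lower bound on the hitting probability.

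\medskip

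\textbf{Step 2 (Oscillation reduction for parabolic functions).} Next I would pass to the space-time process $Z=(U,X)$ and consider a parabolic $q$ on $Q(t_0,x_0,2r)$. Let $M_i=\sup_{Q_i}q$, $m_i=\inf_{Q_i}q$ on a decreasing sequence of space-time cylinders $Q_i=Q(t_0+s_i,x_0,r_i)$ with $r_{i+1}=r_i/L$. Replacing $q$ by an affine renormalization, one of $\{(u,z):\,q(u,z)\le \tfrac{M_i+m_i}{2}\}\cap Q_i$ or its complement in $Q_i$ must carry at least half of the time-space mass. Apply Step 1 (in its time-space form) to obtain
\begin{equation*}
\operatorname{osc}_{Q_{i+1}} q \;\le\; (1-\eta_0/2)\operatorname{osc}_{Q_i} q \;+\; \mathcal{T}_i\,\|q\|_{\infty,r},
\end{equation*}
where $\mathcal{T}_i$ is a tail term arising from the possibility of $X$ jumping out of $B(x_0,r_i)$ before hitting the majority set. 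Using the tail bound in Assumption {\bf(HK1)}(ii), $\mathcal{T}_i$ decays like $r_i^{-\alpha}$ times an exit-time factor controlled by \eqref{p2-1-1}; this term is harmless as long as $r_i\ge r^{\delta}$, which explains the scale restriction on the starting ball.

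\medskip

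\textbf{Step 3 (Iteration).} Given $(s,x),(t,y)\in Q(t_0,x_0,r)$ with $\rho_*:=(C_0^{-1}|s-t|)^{1/\alpha}+\rho(x,y)\ge 2r^{\delta}$, I would iterate the oscillation reduction down the geometric sequence $r_i=r/L^i$ until $r_k\asymp \rho_*$. Since $\rho_*\ge 2r^{\delta}$, the iteration stops before the scale drops below $R^{\delta}$, so Step 2 applies at every stage. The standard telescoping argument yields
\begin{equation*}
|q(s,x)-q(t,y)|\le \operatorname{osc}_{Q_k}q \;\le\; c\,(1-\eta_0/2)^k\|q\|_{\infty,r},
\end{equation*}
and since $k\asymp \log(r/\rho_*)/\log L$, this is precisely \eqref{p2-2-1} with $\beta=-\log(1-\eta_0/2)/\log L$.

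\medskip

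\textbf{Main obstacle.} The genuinely delicate point is Step 1 together with the tail control in Step 2. Under the uniformly elliptic / global $d$-set setting of classical references, both the jump-in intensity into $A$ and the tail term are uniformly controlled across all scales. Here one has only large-scale versions, valid on the band $R^{\delta}\le r\le R$; the iteration in Step 3 must be stopped exactly at this lower threshold, which is the source of the hypothesis $\rho_*\ge 2r^{\delta}$. Threading the constants $\delta$, $R_1$, and $\beta$ so that the three steps chain together without ever forcing the argument below the scale $R^{\delta}$, while simultaneously absorbing the tail contributions $\mathcal{T}_i$, is where the bulk of the technical work lies; beyond this, the proof follows the same blueprint as \cite[Theorem~3.8]{CKW}.
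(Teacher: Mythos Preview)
Your proposal is correct and follows the same approach as the paper: the paper's proof simply invokes Theorem~\ref{p2-1} (the large-scale exit-time estimates) and then says one can follow exactly the argument of \cite[Theorem~3.8]{CKW}, which is precisely the oscillation-reduction scheme you have outlined. Your three-step sketch is a faithful unpacking of that reference, including the crucial observation that the iteration must halt at scale $r^{\delta}$ so that every step remains in the regime where Assumptions {\bf($d$-Vol)} and {\bf(HK1)} (via Theorem~\ref{p2-1}) apply.
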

\begin{proof}
According to Theorem \ref{p2-1}, we can follow exactly the same argument of \cite[Theorem 3.8]{CKW}
to obtain the desired assertion. The details are omitted here.
\end{proof}

We also note that, according to \eqref{a2-2}, there exist constants $R_2\ge1$ and $c^*>0$ (independent of $R_0$) such that for every $R>R_2$,
$x\in B(0,6R)$ and $N\ge1$,
\begin{equation}\label{a3-3-1}
\Big\{z\in V: NR\le \rho(z,x)\le c^*NR\Big\}\neq \emptyset.
\end{equation}

\begin{proposition}{\bf(Lower bound for the Dirichlet heat kernel)}
\label{T:lower}
Suppose that Assumptions {\bf($d$-Vol)}, {\bf (HK1)} and {\bf (HK3)} hold with  $\theta\in (0,1)$ and
$R_0\ge1$.
 Then there exist $R_1\ge1$, $\delta \in (\theta,1)$ and $C_1,C_2>0$ $($all three are independent of $R_0$ and $R_1$$)$
 such that for every $R\ge R_1$, $x,y \in B(0,R/4)$ and $R^{\delta\alpha}\le t \le C_1R^{\alpha}$,
\begin{equation}\label{t4-7-1}
p^{B(0,R)}(t,x,y)\ge C_2\Big(t^{-d/\alpha}\wedge \frac{t}{\rho(x,y)^{d+\alpha}}\Big).
\end{equation}
\end{proposition}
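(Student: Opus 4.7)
The strategy is the standard two-stage approach for stable-like heat kernel lower bounds: first establish a near-diagonal lower bound, and then bootstrap to off-diagonal distances via the L\'evy system formula together with the conductance lower bound supplied by Assumption \textbf{(HK3)}. Throughout, $r_0:=c_0 t^{1/\alpha}$ will serve as the natural diffusion scale at time $t$; the admissibility constraints $t\ge R^{\delta\alpha}$ and $t\le C_1 R^\alpha$ ensure $R^\delta\lesssim r_0\lesssim R$, so that the localized hypotheses \textbf{($d$-Vol)}, \textbf{(HK1)}, \textbf{(HK3)} and the H\"older regularity of Theorem~\ref{p2-2} can all be invoked on $B(x_0,r_0)$ and $B(y,r_0)$.

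\emph{Step 1 (near-diagonal).} First I would show $p^{B(0,R)}(t,x_0,x_0)\ge c\,t^{-d/\alpha}$ for every $x_0\in B(0,R/4)$, and then propagate this to nearby points. The on-point estimate follows from the classical Cauchy--Schwarz trick: by the exit-time bound \eqref{p2-1-1} applied to $B(x_0,r_0)\subset B(0,R)$ (after choosing $c_0$ so that $t\le C_0 r_0^\alpha$), one has $\Pp_{x_0}\bigl(X^{B(0,R)}_{t/2}\in B(x_0,r_0)\bigr)\ge 3/4$, hence
\[
\tfrac{9}{16}\le \mu(B(x_0,r_0))\cdot p^{B(0,R)}(t,x_0,x_0)\le c\,t^{d/\alpha}\cdot p^{B(0,R)}(t,x_0,x_0),
\]
the volume upper bound \eqref{a2-2} yielding the claim. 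To pass from $x_0$ to $y$ with $\rho(x_0,y)\le\varepsilon r_0$, I would apply Theorem~\ref{p2-2} to the parabolic function $q(s,z):=p^{B(0,R)}(s,x_0,z)$ at scale $r\asymp t^{1/\alpha}$, using the on-diagonal upper bound (Proposition~\ref{t4-2}) to control $\|q\|_\infty$. Choosing $\varepsilon$ small enough that the H\"older increment is at most $(c/2)t^{-d/\alpha}$ gives $p^{B(0,R)}(t,x_0,y)\ge (c/2)t^{-d/\alpha}$ for $\rho(x_0,y)\le \varepsilon r_0$.

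\emph{Step 2 (off-diagonal).} For $\rho(x_0,y)>c_2 t^{1/\alpha}$, I would apply Chapman--Kolmogorov and insert Step 1 at the base point $y$:
\[
p^{B(0,R)}(t,x_0,y)\ge \sum_{z\in B(y,r_0)}p^{B(0,R)}(t/2,x_0,z)\,p^{B(0,R)}(t/2,z,y)\,\mu_z\ge c\,t^{-d/\alpha}\,\Pp_{x_0}\bigl(X^{B(0,R)}_{t/2}\in B(y,r_0)\bigr).
\]
To bound the last probability from below, writing $\tau:=\tau_{B(x_0,r_0)}$, I would use the strong Markov property together with the L\'evy system formula \eqref{e:levy}:
\[
\Pp_{x_0}\bigl(\tau\le t/4,\,X_\tau\in B(y,r_0)\bigr)=\Ee_{x_0}\Bigl[\int_0^{\tau\wedge(t/4)}\sum_{z\in B(y,r_0)}\frac{w_{X_s,z}\mu_z}{\rho(X_s,z)^{d+\alpha}}\,ds\Bigr].
\]
Since $\rho(X_s,z)\asymp\rho(x_0,y)$ on the event of integration, Assumption \textbf{(HK3)} supplies $\sum_{z\in B(y,r_0)}w_{X_s,z}\mu_z\ge C_3 r_0^d\asymp t^{d/\alpha}$, while \eqref{p2-1-1}--\eqref{l2-2-1} give $\Ee_{x_0}[\tau\wedge(t/4)]\gtrsim t$. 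Combining, $\Pp_{x_0}(X^{B(0,R)}_{t/2}\in B(y,r_0))\gtrsim t^{1+d/\alpha}\rho(x_0,y)^{-(d+\alpha)}$, and plugging back delivers the desired $p^{B(0,R)}(t,x_0,y)\gtrsim t\,\rho(x_0,y)^{-(d+\alpha)}$.

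\emph{Main obstacle.} The most delicate point is the H\"older propagation in Step 1 for $y$ uncomfortably close to $x_0$: Theorem~\ref{p2-2} requires $\rho(x_0,y)\ge 2r^\delta$, so the estimate degenerates at the natural scale whenever $\rho(x_0,y)<(c_0 t^{1/\alpha})^\delta$. I expect to handle this either by re-applying the H\"older estimate at a smaller scale $r'\asymp\rho(x_0,y)^{1/\delta}\ge R^\delta$, or by an additional Chapman--Kolmogorov step comparing $p^{B(0,R)}(2t,x_0,y)$ with the mass of the common ``good set'' $\{z\in B(x_0,r_0)\cap B(y,r_0):\min\bigl(p^{B(0,R)}(t,x_0,z),p^{B(0,R)}(t,y,z)\bigr)\ge c\,t^{-d/\alpha}\}$, whose measure can be shown to be $\gtrsim t^{d/\alpha}$ by pairing the on-diagonal upper bound with the exit-time probabilistic estimate from Theorem~\ref{p2-1}.
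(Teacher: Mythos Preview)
Your two-step strategy (near-diagonal via Cauchy--Schwarz and parabolic H\"older regularity, then off-diagonal via the L\'evy system and \textbf{(HK3)}) matches the paper's proof exactly.

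For the obstacle you flag---the case $\rho(x,y)<c\,t^{\delta/\alpha}$ where Theorem~\ref{p2-2} cannot be applied directly to the pair $(x,y)$---the paper uses a cleaner device than either of your proposals: pick (via \eqref{a3-3-1}) an auxiliary point $z\in B(0,R/2)$ with $\rho(x,z)\asymp t^{\delta/\alpha}$, so that $\rho(y,z)\asymp t^{\delta/\alpha}$ as well. The already-established intermediate case gives $p^{B(0,R)}(t,x,z)\ge c\,t^{-d/\alpha}$, and the H\"older estimate, now applied to the pair $(y,z)$ for which the threshold $\rho(y,z)\ge 2r^\delta$ \emph{is} satisfied, bounds $|p^{B(0,R)}(t,x,y)-p^{B(0,R)}(t,x,z)|\le c\,t^{-d/\alpha}\cdot t^{-(1-\delta)\beta/\alpha}$, which is negligible for $R$ large. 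Your Solution~2 (Chapman--Kolmogorov via a good set) would also work, at the cost of a time-doubling; your Solution~1 (rescale to $r'\asymp\rho(x,y)^{1/\delta}$) runs into the hard floor $r\ge R^\delta$ of Theorem~\ref{p2-2} once $\rho(x,y)<R^{\delta^2}$, so it would require further iteration. One minor correction: to control $\|q\|_{\infty,r}$ you should cite the Dirichlet on-diagonal bound \eqref{t4-2-1a} (Proposition~\ref{p-on}) rather than Proposition~\ref{t4-2}, since the latter carries an extraneous factor $(1\vee\mu_y^{-1})$.
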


\begin{proof}
The proof is split into two steps, and the first one is concerned with near-diagonal lower bound estimates.

{\bf Step (1)}  It follows from \eqref{p2-1-2} that for each $\theta'\in
(\theta,1)$, there exists a constant $\delta \in (\theta',1)$ (here we will take $\delta$ a little bit larger such that
$\delta\in (\theta',1)$) and $R_1\ge1$ such that for all $R\ge
R_1$, $x\in B(0,R)$, $t\ge 2R^{\theta'\alpha}$ and $R^{\delta}\le r
\le R$,
\begin{equation}\label{t4-7-2}
\sum_{y\in B(x,r)^c} p^{B(0,R)}(t/2, x,y)\mu_y\le \Pp_x(\tau_{B(x,r)}\le
t/2)\le  c_1\left(\frac{t}{r^{\alpha}}\right)^{1/3}.
\end{equation}
Choosing $c_0>0$ large enough and $c_2>0$ small enough such that
$c_1c_0^{-\alpha/3}\le 1/3$, $c_1c_2^{1/3}\le 1/3$ and $c_0 c_2^{1/\alpha}\le 1/2$, we have that for all $R>R_1$, $2R^{\theta'\alpha}\le
t \le c_2R^{\alpha}$ and $x\in B(0,R/2)$,
\begin{align*}
&\sum_{y\in B(x, c_0t^{1/\alpha})} p^{B(0,R)}(t/2, x,y)\mu_y\\
&=\sum_{y \in
B(0,R)} p^{B(0,R)}(t/2, x,y)\mu_y\!-\! \sum_{y\in B(x, c_0t^{1/\alpha})^c}
p^{B(0,R)}(t/2,x,y)\mu_y\\
&=1-\Pp_x\big(\tau_{B(0,R)}\le t/2\big)-\sum_{y\in B(x,
c_0t^{1/\alpha})^c}
p^{B(0,R)}(t/2,x,y)\mu_y\\
&\ge 1-\Pp_x\big(\tau_{B(x,R/2)}\le t/2\big)\!-\!\sum_{y\in B(x,
c_0t^{1/\alpha})^c}
p^{B(0,R)}(t/2,x,y)\mu_y\\
&\ge 1-c_1(tR^{-\alpha})^{1/3}-c_1 c_0^{-\alpha/3}\\
&\ge 1-c_1c_2^{1/3}-c_1c_0^{-\alpha/3}=:c_3\ge 1/3,
\end{align*}
where in the first equality we have used the fact that $B(x,c_0t^{1/\alpha})\subseteq
B(0,R)$, and the second inequality follows from \eqref{t4-7-2}.
By the semigroup property and the
Cauchy-Schwarz inequality,
we get that for all $R\ge R_1$, $x\in B(0,R/2)$ and $2R^{\theta'\alpha}\le 2R^{\delta \alpha}\le t \le
c_2R^{\alpha}$,
\begin{equation}\label{t4-7-3}
\begin{split}
p^{B(0,R)}(t,  x,x)&= \sum_{y\in B(0,R)} p^{B(0,R)}(t/2, x,y)^2\mu_y\!\ge \!\!\!\!\sum_{y\in B(x, c_0t^{1/\alpha})}p^{B(0,R)}(t/2, x,y)^2\mu_y\\
&\ge c_4t^{-d/\alpha} \bigg(\sum_{y\in B(x, c_0t^{1/\alpha})}
p^{B(0,R)}(t/2, x,y)\mu_y\bigg)^2 \ge c_5t^{-d/\alpha}.
\end{split}
\end{equation}

On the other hand, under Assumptions {\bf($d$-Vol)}  and {\bf(HK1)}, we have \eqref{p2-2-1}. Let $C_0$ be the constant in \eqref{p2-2-1}, which is used in the definition of $Q(t,x,R)$.  For every fixed $t>0$ and $x \in V$, set
$$f_{t,x}(s,z)=p^{B(0,R)}(t-s,x,z),\quad (s,z)\in [0,t)\times V.$$
It is easy to verify that $f_{t,x}(\cdot,\cdot)$ is parabolic on
$Q\big(0,x,(2^{-1}C_0^{-1}t)^{1/\alpha}\big)$ for every
$x \in B(0,R/2)$
and $2R^{\delta\alpha}\le t \le c_2R^{\alpha}$ with some
$c_2>0$ small enough.
Therefore, according to \eqref{p2-2-1}, there exist $\delta \in (\theta,1)$ and $R_1\ge1$ (for simplicity we adopt
the same $R_1$ and $\delta$ as those in \eqref{t4-7-2}) such that for all
  $R> R_1$, $4R^{\delta\alpha}\le t\le c_2 R^{\alpha}$,
$x\in B(0,R/2)$
and $y \in B(x,3^{-1}(2^{-1}C_0^{-1}t)^{1/\alpha})$ with $\rho(x,y)\ge (2^{-1}C_0^{-1}t)^{\delta/\alpha}$,
\begin{equation}\label{t4-7-4a}
\begin{split}
&\big|p^{B(0,R)}(t, x,y)-p^{B(0,R)}(t,x,x)\big|\\
&=\big|f_{t,x}(0,y)-f_{t,x}(0,x)\big|\le c_6\left(\sup_{s\in B(0,t/2) ,z\in V}|f_{t,x}(s,z)|\right)\left(\frac{\rho(x,y)}{t^{1/\alpha}}\right)^{\beta}\\
&\le c_6\left(\sup_{t\ge 2R^{\delta\alpha} ,z\in V}p^{B(0,R)}(t,x,z)\right)\left(\frac{\rho(x,y)}{t^{1/\alpha}}\right)^{\beta}\le c_7t^{-d/\alpha}\left(\frac{\rho(x,y)}{t^{1/\alpha}}\right)^{\beta},
\end{split}
\end{equation}
where the last inequality is due to \eqref{t4-2-1a}.
Combining this with \eqref{t4-7-3}, we can find constants $c_8>0$ and
$c_9>0$ (small enough satisfying that $c_7(4c_9)^{\beta/\alpha}\le c_5/2$) such that for
$4R^{\delta\alpha}\le t\le c_2 R^{\alpha}$ and $x,y\in B(0,R/2)$ with
$c_8t^{\delta/\alpha}:=(2^{-1}C_0^{-1}t)^{\delta/\alpha}\le \rho(x,y)\le 4c_9t^{1/\alpha}$,
\begin{equation}\label{t4-7-4}
\begin{split}
p^{B(0,R)}(t, x,y)&\ge p^{B(0,R)}(t,x,x)- c_7t^{-d/\alpha}\Big(\frac{\rho(x,y)}{t^{1/\alpha}}\Big)^{\beta}\\
&\ge c_5t^{-d/\alpha}-c_7(2c_9)^{\beta/\alpha}t^{-d/\alpha}\ge
c_{10}t^{-d/\alpha}.
\end{split}
\end{equation}

Next, we consider the case that $x,y\in B(0,R/3)$ with $\rho(x,y)<c_8t^{\delta/\alpha}$. By \eqref{a3-3-1}, we can find $z \in B(0,R/2)$ and $c_{11}>3c_8$ such that $c_{11}t^{\delta/\alpha}\le \rho(z,x)\le c^* c_{11}t^{\delta/\alpha}$. Thus, $2c_8t^{\delta/\alpha}<\rho(y,z)\le
(1+c^*)c_{11}t^{\delta/\alpha}\le c_9t^{1/\alpha}$ (if we choose $R$ large enough). Then, according to the argument of \eqref{t4-7-4a}, we can obtain that
$$
\big|p^{B(0,R)}(t,x,y)-p^{B(0,R)}(t,x,z)\big|
\le c_7t^{-d/\alpha}\Big(\frac{\rho(y,z)}{t^{1/\alpha}}\Big)^{\beta}\le c_{12}t^{-d/\alpha}{t^{-(1-\delta)\beta/\alpha}}.
$$
On the other hand, by \eqref{t4-7-4},
$
p^{B(0,R)}(t,x,z)\ge c_{10}t^{-d/\alpha}.
$
Hence, it holds that
\begin{align*}
p^{B(0,R)}(t, x,y)&\ge p^{B(0,R)}(t,x,z)-\big|p^{B(0,R)}(t,x,y)-p^{B(0,R)}(t,x,z)\big|\\
&\ge
c_{10}t^{-d/\alpha}-c_{12}t^{-d/\alpha}t^{-(1-\delta)\beta/\alpha}\ge
c_{13}t^{-d/\alpha}.
\end{align*}

By now we have proved that for all $R>R_1$, $4R^{\delta\alpha}\le t\le c_2
R^{\alpha}$ and $x,y\in B(0,R/3)$ with
$\rho(x,y)\le 4c_9t^{1/\alpha}$,
$$
p^{B(0,R)}(t, x,y)\ge c_{13}t^{-d/\alpha}.
$$

{\bf Step (2)}  Now, by the strong Markov property, for
all $R>R_1$, $4R^{\delta\alpha}\le t \le c_2R^{\alpha}$, $a \in (0,1)$ small
enough and $x, y \in B(0,R/4)$
with $\rho(x,y)> 4c_9t^{1/\alpha}$,
\begin{equation}\label{t4-7-5}
\begin{split}
&\Pp_x\big(X_{at}\in B(y,2c_9t^{1/\alpha});\tau_{B(0,R)}>at\big)\\
&\ge \Pp_x\bigg(\sigma_{B(y,c_9t^{1/\alpha})}\le a t;
\sup_{s\in[\sigma_{B(y,c_9t^{1/\alpha})}, at]}
\rho(X_s,X_{\sigma_{B(y,c_9t^{1/\alpha})}})< c_9t^{1/\alpha}\bigg)\\
&\ge \Pp_x(\sigma_{B(y,c_9t^{1/\alpha})}\le at)\inf_{z\in B(y,
c_9t^{1/\alpha})}
\Pp_z(\tau_{B(z,c_9t^{1/\alpha})}>at)\\
&\ge \Pp_x(\sigma_{B(y,c_9t^{1/\alpha})}\le at)\inf_{z \in B(0,R)}\Pp_z(\tau_{B(z,c_9t^{1/\alpha})}>at)\\
&\ge \Pp_x(\sigma_{B(y,c_9 t^{1/\alpha})}\!\!\le\!\! at)\left(1\!-\!\frac{c_1a t}{(c_9t^{1/\alpha})^{\alpha}}\right)\!\!\\
&\ge \frac{1}{2}\Pp_x\Big(X_{(at)\wedge
\tau_{B(x,c_9t^{1/\alpha})}}\!\!\in\!\! B(y, c_9t^{1/\alpha})\Big).
\end{split}
\end{equation}
Here the third inequality above is due to
$B(y,c_9t^{1/\alpha})\subseteq B(0,R)$ since $y\in B(0,R/4)$ and
$t\le c_2R^{\alpha}$ for some $c_2$ small enough, and in the fourth
inequality we used \eqref{p2-1-2} thanks to the facts that $at\ge
{(c_9t^{1/\alpha})^{\delta \alpha}}$ and
$c_9t^{1/\alpha}\ge c_94^{1/\alpha}R^{\delta}\ge R^{\theta^{''}}$
for any $\theta^{''}\in (\theta',\delta)$ and $R$ large enough, and
in the last inequality we used the fact that $a$ is small enough such that
$ac_9^{-\alpha}c_{1}\le 1/2$.

Note that $B(x,c_9t^{1/\alpha})\cap B(y,c_9t^{1/\alpha})=\emptyset$ for any $x, y \in B(0,R/4)$ with
$\rho(x,y)\ge 4c_9t^{1/\alpha}$. Then,
by \eqref{e:levy},
for all $R>R_1$, $4R^{\theta'\alpha}\le t \le
c_2R^{\alpha}$ and $x, y \in B(0,R/4)$ with
$\rho(x,y)\ge 4c_9t^{1/\alpha}$, we have
\begin{equation}\label{t4-7-6}
\begin{split}
&\Pp_x\Big(X_{(at)\wedge \tau_{B(x,c_9t^{1/\alpha})}}\in B(y, c_9t^{1/\alpha})\Big)\\
&= \Ee_x\left[ \sum_{s\le{(at)\wedge \tau_{B(x,c_9t^{1/\alpha})}}}\I_{\{X_s\in B(y, c_9t^{1/\alpha})\}}  \right]\\
&\ge\Ee_x\left[\int_0^{{(at)\wedge \tau_{B(x,c_9t^{1/\alpha})}}}
\left(\sum_{u\in B(y, c_9t^{1/\alpha})}
\frac{w_{X_s,u}\mu_u}{\rho(X_s,u)^{d+\alpha}}
\,\right)ds\right]\\
&\ge c_{14}\rho(x,y)^{-d-\alpha}\left(\inf_{v\in B(x,c_9t^{1/\alpha})}\sum_{u \in B(y,c_9t^{1/\alpha})}w_{v,u}\mu_u\right)
 \Ee_x\left[ {{(at)\wedge \tau_{B(x,c_9t^{1/\alpha})}}}\right] \\
&\ge c_{15}\Pp_x\left[{{\tau_{B(x,c_9t^{1/\alpha})}\ge at}}\right]\frac{t^{1+d/\alpha}}
{\rho(x,y)^{d+\alpha}}\ge  \frac{c_{16} t^{1+d/\alpha}}{\rho(x,y)^{d+\alpha}}.
\end{split}
\end{equation}
Here
in the second inequality we have used the fact that for any $v\in
B(x,c_9t^{1/\alpha})$ and $u\in B(y, c_9t^{1/\alpha})$,
$$
\rho(v,u)\le \rho(v, x)+\rho(x,y)+\rho(y,u)\le \rho(x,y)+2c_9t^{1/\alpha}\le 2\rho(x,y),
$$
 in the third inequality we used the fact that
 ${ R^{\theta'}\le
 4^{1/\alpha}R^{\delta}\le t^{1/\alpha}\le c_2^{1/\alpha}R\le R}$ for
 $\theta^{'}\in (\theta,\delta)$ and Assumption {\bf(HK3)},
and the last inequality follows from the same argument in the fourth inequality of
\eqref{t4-7-5}.

Note again that $B(y,c_9t^{1/\alpha})\subseteq B(0,R/3)$ since
$y \in B(0,R/4)$
and $t\le c_2R^{\alpha}$ for some $c_2$ small enough. Since {$4R^{\theta^{'}\alpha}\le 4R^{\delta\alpha}\le t \le
c_2R^{\alpha}\le R^{\alpha}$} for $\theta^{'}\in
(\theta,\delta)$, we can obtain from  \eqref{t4-7-4} that for any $a
\in (0,1/2)$,
$$
\inf_{y\in B(0,R/3),z \in B(y,c_9t^{1/\alpha})}p^{B(0,R)}\big(
(1-a)t,y,z\big)\ge c_{17}t^{-d/\alpha}.
$$
Hence, combining this with \eqref{t4-7-5} and \eqref{t4-7-6}, for every $R>R_1$, $4R^{\delta\alpha}\le t \le c_2R^{\alpha}$ and
$x,y\in B(0,R/4)$ with $\rho(x,y)\ge 4c_9t^{1/\alpha}$,
\begin{align*}
p^{B(0,R)}(t, x,y) &\ge \sum_{z\in B(y, c_9t^{1/\alpha})}
p^{B(0,R)}(at,x,z)
p^{B(0,R)}((1-a)t,z,y)\mu_z\\
&\ge\inf_{z\in B(y, c_9t^{1/\alpha})} p^{B(0,R)}({(1-a)t},z,y)\sum_{z\in B(y,c_9t^{1/\alpha})} p^{B(0,R)}(at,x,z)\mu_z\\
&\ge c_{18}t^{-d/\alpha}\Pp_x\big(X_{at}\in B(y,2c_9t^{1/\alpha}),\tau_{B(0,R)}>at\big)\ge \frac{c_{19} t}{\rho(x,y)^{d+\alpha}}.
\end{align*}

Therefore, by all the estimates above, we prove the desired assertion by change $\delta$ a little large if necessary.
\end{proof}

As a consequence of Proposition \ref{T:lower}, we can obtain the following
lower bound of the heat kernel.

\begin{theorem}{\bf (Lower bound for the heat kernel)}\label{t2-3}
Suppose that {Assumptions {\bf($d$-Vol)},  {\bf (HK1)} and {\bf(HK3)}} hold with $\theta\in (0,1)$ and
$R_0\ge1$.
 Then {there exist $R_1\ge1$ and  $\delta \in (\theta,1)$ $($independent of $R_0$ and $R_1$$)$
such that for every $x,y\in V$ and $t>\big(\rho(0,x)\vee \rho(0,y)\vee R_1\big)^{\delta \alpha}$,
\begin{equation}\label{t2-3-1}
p(t,x,y)\ge C_1\Big(t^{-d/\alpha}\wedge \frac{t}{\rho(x,y)^{d+\alpha}}\Big).
\end{equation} where $C_1>0$ is independent of $R_0$, $R_1$, $t$, $x$ and $y$.
In particular,
there exist $T_0>0$ and $\delta \in (\theta,1)$ $($independent of $R_0$ and $T_0$$)$ such that for all $t\ge T_0$ and $x,y \in B(0,t^{1/(\delta\alpha)})$, \eqref{t2-3-1} holds with $C_1>0$ independent of $R_0$, $T_0$, $t$, $x$ and $y$.}
 \end{theorem}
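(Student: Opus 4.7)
\medskip

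\noindent\textbf{Proof proposal for Theorem \ref{t2-3}.}
The plan is to bootstrap the Dirichlet heat kernel lower bound of Proposition \ref{T:lower} to a global lower bound, using the trivial (but crucial) domination
$$p(t,x,y) \ge p^{B(0,R)}(t,x,y), \qquad R\ge 1.$$
Unlike the upper bound, no Dynkin--Hunt decomposition is required: it suffices, for each admissible triple $(x,y,t)$, to exhibit a radius $R$ for which Proposition \ref{T:lower} applies, i.e., such that $R\ge R_1$, $x,y\in B(0,R/4)$, and $R^{\delta\alpha}\le t\le C_1 R^{\alpha}$ (where $C_1$, $R_1$, $\delta$ are the constants of Proposition \ref{T:lower}).

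First, given $x,y\in V$ and $t>(\rho(0,x)\vee\rho(0,y)\vee R_1)^{\delta\alpha}$, I set
$$R := \max\bigl\{4(\rho(0,x)\vee\rho(0,y)),\, R_1,\, (t/C_1)^{1/\alpha}\bigr\}.$$
By construction $R\ge R_1$, $x,y\in B(0,R/4)$ and $C_1 R^{\alpha}\ge t$, so only the inequality $R^{\delta\alpha}\le t$ needs to be verified. If $R$ is achieved by one of the first two terms, this is exactly the hypothesis on $t$ (absorbing harmless constants into $R_1$ or enlarging $\delta$ slightly). If $R=(t/C_1)^{1/\alpha}$ then $R^{\delta\alpha}=C_1^{-\delta}t^{\delta}\le t$ provided $t\ge C_1^{-\delta/(1-\delta)}$, which is again secured by enlarging $R_1$ once and for all. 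With this $R$ fixed, Proposition \ref{T:lower} directly gives
$$p(t,x,y)\ge p^{B(0,R)}(t,x,y)\ge C_2\Bigl(t^{-d/\alpha}\wedge \frac{t}{\rho(x,y)^{d+\alpha}}\Bigr),$$
which is \eqref{t2-3-1}.

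For the ``in particular'' statement, I will specialise $R$ to be comparable to $t^{1/(\delta\alpha)}$. More precisely, work with $\delta'$ slightly larger than $\delta$ (the final exponent in the theorem), and for $t\ge T_0$ and $x,y\in B(0,t^{1/(\delta'\alpha)})$ take $R := 4 t^{1/(\delta'\alpha)}$. Then $x,y\in B(0,R/4)$; the condition $R^{\delta\alpha}\le t$ holds because $\delta<\delta'$ and $T_0$ is large enough to absorb the factor $4^{\delta\alpha}$; and $C_1R^{\alpha}\ge t$ follows from $1/(\delta'\alpha)>1/\alpha$ together with $t\ge T_0$. Thus Proposition \ref{T:lower} again applies and yields \eqref{t2-3-1} uniformly for $x,y\in B(0,t^{1/(\delta'\alpha)})$.

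The only delicate point is the bookkeeping around the exponents: Proposition \ref{T:lower} is stated with one $\delta$, but the reduction above may force a marginally larger exponent in Theorem \ref{t2-3} (to guarantee the admissible range of $R$ is nonempty and to soak up universal multiplicative constants). I expect this to be routine---precisely the same kind of ``$\delta'\in(\delta,1)$'' manoeuvre that appears in Proposition \ref{t4-2} for the upper bound---and not to require any new analytic input beyond Proposition \ref{T:lower}.
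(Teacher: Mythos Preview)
Your proposal is correct and follows essentially the same approach as the paper: both bootstrap Proposition \ref{T:lower} through the trivial domination $p(t,x,y)\ge p^{B(0,R)}(t,x,y)$ by choosing $R$ large enough that $x,y\in B(0,R/4)$ and $R^{\delta\alpha}\le t\le C_1R^\alpha$, then enlarge $\delta$ slightly to absorb the constants. Your explicit choice of $R$ and the case analysis are in fact more detailed than the paper's presentation, which simply asserts that such an $R$ exists.
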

\begin{proof}
Let $x,y\in V$ and $t>0$ such that $t\ge D(x,y)^{\delta\alpha}$, where $D(x,y)=\rho(0,x)\vee \rho(0,y)\vee R_1$ for some large $R_1\ge 1$ and $\delta\in (\theta,1)$. Then, taking $R\ge D(x,y)$ such that
$x,y\in B(0,R)$ and $R^{\delta\alpha}\le t \le C_1R^{\alpha}$ with $C_1$ being the constant in
Proposition \ref{T:lower}, and applying \eqref{t4-7-1} with $t$ and $R$ above, we can get
$$
p(t,x,y)\ge p^{B(0,R)}(t,x,y)\ge c_1\Big(t^{-d/\alpha}\wedge \frac{t}{\rho(x,y)^{d+\alpha}}\Big).
$$ This proves \eqref{t2-3-1} by choosing $\delta$ a little bit larger.
Choosing $R=t^{1/(\delta\alpha)}$ and then renaming $R_1^{\delta\alpha}$ as $T_0$ in the estimate above, we can prove the second desired assertion also by taking $\delta$ a little bit larger.
\end{proof}

\section{Green Function Estimates and Elliptic Harnack Inequalities}\label{sec4}
In this section, we give some estimates of Green functions and
then give a consequence of elliptic Harnack inequalities.
The results in this section is used in Subsection \ref{EHI-cou}.
Throughout this section, we assume that {\it $(G,\mu)$ satisfies the global $d$-set condition; that is,
\begin{equation}\label{e4-1}
c_{\mu}^{-1}\le \mu_x\le c_\mu,\ \ \ \ c_\mu^{-1}r^d \le \mu\big(B(x,r)\big)\le c_\mu r^d,\quad r\ge 1,\ x\in V.
\end{equation}}
We also suppose in this section
that {\it $w_{x,y}>0$ for all $x,y\in V$ with $x\neq y$.}

\subsection{Upper bounds for the Dirichlet heat kernel: small times}
In this subsection we present some upper bounds of
the Dirichlet heat kernel for small times,
which are used to obtain estimates for Green functions in the next subsection.

In the nearest neighbor conductance models,
some priori estimates are used for the small time heat kernel estimates,
see \cite[Corollaries 11 and 12]{D} or \cite[Theorems 2.1 and 2.2]{Fo}. However, such estimates are not known for the long range case.
In order to overcome this obstacle, we will adopt the localization approach (see \cite[Section 2.2]{CKW} for more details).
We note that for our arguments to work, we need the
global $d$-set condition \eqref{e4-1} and the assumption that
$w_{x,y}>0$ for all $x,y\in V$ with $x\neq y$.
Since finally we
will apply results in this part to
Subsection \ref{EHI-cou}, in which we show
the elliptic Harnack inequalities do not hold in general on long-range random conductance models,
we are satisfied with the assumptions. It is an interesting open problem how much we can relax the assumptions.

For any fixed $R\ge1$, $\gamma\in (0,1)$ and $x_0\in B(0,2R)$, we define the following
symmetric regular Dirichlet form $(\hat D^{x_0, \gamma,R},\hat
\F^{x_0, \gamma,R})$:
\begin{align*}
\hat D^{x_0,\gamma,R}(f,f)=&\sum_{x,y\in
V:\rho(x,y)\le \gamma R}\big(f(x)-f(y)\big)^2\frac{\hat w_{x,y}}{\rho(x,y)^{d+\alpha}}\mu_x\mu_y,\quad
f\in
\hat \F^{x_0,\gamma,R},\\
\hat \F^{x_0,\gamma,R}=&\{f \in L^2(V;\mu): \hat D^{x_0,\gamma,
R}(f,f)<\infty\},
\end{align*}
where
\begin{equation}\label{e4-2a}
\hat w_{x,y}=
\begin{cases}
& w_{x,y},\ \ \ \text{if}\ x\in B(x_0, R)\ \text{or}\ y\in B(x_0, R),\\
& \,\, 1,\ \ \ \ \ \ \text{otherwise}.
\end{cases}
\end{equation}
Here we omit the parameters $x_0$ and $R$ in the definition of $\hat w_{x,y}$ for simplicity.
Denote by  $(\hat X_t^{x_0,\gamma,R})_{t\ge 0}$  the symmetric Hunt process
associated with $(\hat D^{x_0, \gamma,R},\hat
\F^{x_0, \gamma,R})$.

For every $r>0$, set
$$
\Theta(r):=1+ \sup_{x,y\in B(0,r)}w^{-1}_{x,y}.
$$

\begin{lemma}\label{l2-1}
Suppose that Assumption {\bf (HK1)} holds with $\theta\in (0,1)$ and $R_0\ge1$. Then, for any
$\gamma\in (0,\alpha/(3(d+\alpha))]$,
there exists a constant $R_1\ge1$ such
that for every $R>R_1$, $x_0\in B(0,2R)$, $R^{\theta}\le r\le R$, $x\in V$ and $t>0$,
\begin{equation}\label{l2-1-1}
\Pp_x\Big(\rho\big(\hat X_t^{x_0,\gamma, r},x\big)>r\Big)\le C_1\Theta(4R)^{d/\alpha}\frac{t}{r^\alpha},
\end{equation}
where $C_0,C_1>0$ are independent of $R_0$, $R_1$, $R$, $r$, $x_0$, $x$ and $t$ $($but may depend on $\gamma$$)$.
\end{lemma}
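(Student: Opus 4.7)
The plan is to adapt the localization-truncation strategy of \cite[Propositions 2.2 and 2.3]{CKW} to the modified form $(\hat D^{x_0,\gamma,r},\hat\F^{x_0,\gamma,r})$, the crucial new input being a uniform lower bound on the clipped conductances in terms of the function $\Theta$. First I would verify that $\hat w_{x,y}\ge\Theta(4R)^{-1}$ for every pair contributing to the Dirichlet form. Indeed, any pair with $\rho(x,y)\le\gamma r$ and at least one endpoint in $B(x_0,r)\subset B(0,3R)$ has both endpoints in $B(0,4R)$ (since $\gamma r\le R$), so $\hat w_{x,y}=w_{x,y}\ge\Theta(4R)^{-1}$ by definition of $\Theta$; in the remaining case $\hat w_{x,y}=1\ge\Theta(4R)^{-1}$. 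Combining this uniform ellipticity with the global $d$-set condition \eqref{e4-1} by the standard Varopoulos--Carlen--Saloff-Coste argument for truncated stable-like forms yields a Nash-type inequality
\begin{equation*}
\|f\|_2^{2+2\alpha/d}\le C\,\Theta(4R)\,\bigl(\hat D^{x_0,\gamma,r}(f,f)+(\gamma r)^{-\alpha}\|f\|_2^2\bigr)\|f\|_1^{2\alpha/d},\qquad f\in\hat\F^{x_0,\gamma,r},
\end{equation*}
where the cutoff term $(\gamma r)^{-\alpha}\|f\|_2^2$ encodes the truncation at range $\gamma r$.

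Feeding this Nash inequality into the classical Nash iteration (differentiating $t\mapsto\|\hat P_t^{x_0,\gamma,r}f\|_2^2$ along the semigroup and integrating the resulting ODE) produces the on-diagonal upper bound
\begin{equation*}
\hat p^{x_0,\gamma,r}(t,z,z)\le C\,\Theta(4R)^{d/\alpha}\,t^{-d/\alpha},\qquad 0<t\le(\gamma r)^\alpha,\ z\in V.
\end{equation*}
Following the reasoning of \cite[Proposition 2.3]{CKW}, I would then transfer this on-diagonal information to a tail estimate: applying the L\'evy system formula of Lemma \ref{levy} to a test function of the form $g(\rho(x,\cdot)\wedge r)$ and exploiting the on-diagonal bound above, a Chebyshev argument delivers
\begin{equation*}
\Pp_x\bigl(\rho(\hat X_t^{x_0,\gamma,r},x)>r\bigr)\le C\,\Theta(4R)^{d/\alpha}\,\frac{t}{r^\alpha}
\end{equation*}
in the regime $t\le r^\alpha/\Theta(4R)^{d/\alpha}$; for larger $t$ the right-hand side already exceeds one and the inequality is trivial. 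The restriction $\gamma\le\alpha/(3(d+\alpha))$ enters to guarantee that the cutoff scale $(\gamma r)^{-\alpha}$ in the Nash inequality stays subordinate to the on-diagonal scale $t^{-d/\alpha}$ throughout the iteration.

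The main obstacle is the final step: converting on-diagonal control into an exit-probability bound without invoking the Davies off-diagonal method, which, as noted in the introduction, is not available in this non-uniformly elliptic stable-like setting. The observation that makes this viable is precisely the truncation at range $\gamma r$, which forces the process to exit $B(x,r)$ only by accumulating many small jumps; this permits a direct L\'evy-system computation in place of the Gaussian-type exponential machinery. The uniformity in $x\in V$ (and not merely $x\in B(0,2R)$) is essentially automatic, because for $x$ far from $x_0$ the process $\hat X^{x_0,\gamma,r}$ locally sees the constant conductance $\hat w\equiv1$ and then the estimate reduces to the standard one for a uniformly elliptic truncated stable-like process, which is even stronger than what is claimed.
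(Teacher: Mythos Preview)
Your on-diagonal step is fine and essentially matches the paper: the uniform lower bound $\hat w_{x,y}\ge\Theta(4R)^{-1}$ together with the global $d$-set condition \eqref{e4-1} yields a Nash-type inequality, and one obtains $\hat p^{x_0,\gamma,r}(t,x,y)\le c\,\Theta(4R)^{d/\alpha}t^{-d/\alpha}$ for $0<t\le(\gamma r)^\alpha/2$ (the paper reaches this via the Poincar\'e-type route of Proposition~\ref{p-on}, but that is cosmetically different).

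The genuine gap is in your second step. You explicitly avoid the Davies off-diagonal machinery, citing the remark in the introduction that it ``is not available in this non-uniformly elliptic stable-like setting.'' That remark refers to the \emph{original} form $(D,\F)$, for which Nash/Sobolev inequalities fail. The localized truncated form $(\hat D^{x_0,\gamma,r},\hat\F^{x_0,\gamma,r})$ is uniformly elliptic by construction---this is precisely the observation you opened with---and the Davies method of \cite[Theorem (3.25)]{CKS} applies to it perfectly well. The paper \emph{does} invoke CKS: with $\psi_\lambda(z)=\lambda(\rho(x,y)\wedge\rho(z,x))$ and an optimized $\lambda$ it obtains the pointwise off-diagonal bound
\[
\hat p^{x_0,\gamma,r}(t,x,y)\le c\,\Theta(4R)^{d/\alpha}t^{-d/\alpha}\Bigl(\frac{t}{(\gamma r)^\alpha}\Bigr)^{\rho(x,y)/(3\gamma r)},
\]
and then sums over dyadic annuli. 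The condition $\gamma\le\alpha/(3(d+\alpha))$ enters \emph{here}, as the inequality $\frac{1}{3\gamma}-\frac d\alpha\ge 1$ needed to make the annular sum produce a factor of $t/r^\alpha$; it is not, as you suggest, a constraint on the Nash iteration.

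Your proposed replacement---``L\'evy system plus Chebyshev, following \cite[Proposition~2.3]{CKW}''---does not deliver the claimed linear bound. That moment argument yields $\Ee_x[\rho(\hat X_t,x)]\le C r(t/r^\alpha)^{1/2}[1+\log(r^\alpha/t)]$ and hence, by Markov, only $\Pp_x(\rho(\hat X_t,x)>r)\le C(t/r^\alpha)^{1/2}[1\vee\log(r^\alpha/t)]$, which is exactly \eqref{p2-1-2} and is strictly weaker than \eqref{l2-1-1}. The linear decay $t/r^\alpha$ is genuinely needed downstream (e.g.\ in Proposition~\ref{l5-1}) to integrate the heat kernel against $dt$ near $t=0$ without picking up a logarithmic divergence. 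So unless you can specify a different mechanism that extracts the full exponent, the proposal as written does not close.
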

\begin{proof} We only need to verify \eqref{l2-1-1} the case that $0<t\le (\gamma r)^\alpha/2$.  The proof is split into two steps.

{\bf Step (1)}
By Assumption {\bf (HK1)} and the definition of $\hat w_{x,y}$, we can verify that for every $1\le r\le R$
and $x_0\in B(0,2R)$,
\begin{equation}\label{l2-1-2}
\sup_{x \in V}\sum_{y\in V:\rho(x,y)\le
c_*r}\hat w_{x,y}^{-1}\mu_y\le c_0\Theta(4R)r^{d}
\end{equation} with $c_*:=8c_\mu^{2/d}$,
and also that there is a constant $R_0\ge 1$ such that for all $R\ge R_0$, $x_0\in B(0,2R)$ and $R^{\theta}\le r \le R$,
\begin{equation}\label{l2-1-1a}
 \sup_{x\in V}\sum_{y\in V:\rho(x,y)\le  \gamma r}
\frac{\hat w_{x,y}\mu_y}{\rho(x,y)^{d+\alpha-2}}\le c_0(\gamma r)^{2-\alpha}.
 \end{equation}

Using \eqref{l2-1-2} and  \eqref{e4-1},
 and following the argument of \eqref{t4-2-2}, we can obtain that for any $x_0\in
B(0,2R)$, $1\le r\le R$, $x\in V$ and any measurable function $f$ on $V$,
\begin{equation}\label{t4-6-1}
\begin{split}&\sum_{z\in B(x,r)}(f(z)\!-\!(f)_{B(x,r)})^2\mu_z\le c_1\Theta(4R) r^{\alpha}\!\!\!
\sum_{z,y\in B(x,r)}\!\!(f(z)\!-\!f(y))^2 \frac{\hat w_{z,y}}{\rho(z,y)^{d+\alpha}}\mu_y\mu_z.\end{split}
\end{equation}
Let $\hat p^{x_0,\gamma,r}(t,x,y)$ be the heat kernel associated with the process $(\hat X_t^{x_0,\gamma,r})_{t\ge 0}$, and let
$\psi_{x_0}(t,x)=\hat p^{x_0,\gamma,r}(2t,x,x)=\sum_{z\in V}\hat p^{x_0,\gamma,r}(t,x,z)^2\mu_z$ for any $t>0$ and $x\in V$.
Then, it follows from \eqref{t4-6-1} and the argument of \eqref{t4-2-3} that for all $x_0\in B(0,2R)$, $x\in V$, $1\le r \le R$, $0<t\le (\gamma r)^{\alpha}/2$
and $1\le r(t)\le \gamma r$,
\begin{equation*}
\psi_{x_0}'(t,x)\le c_2\Theta(4R)^{-1}r(t)^{-\alpha}\big(\psi_{x_0}(t,x)-c_3r(t)^{-d}\big).
\end{equation*}
Here and in what follows constants $c_i$ may depend on $\gamma$.
Thus,  further following  part (3) in the proof of Proposition \ref{p-on}, we  can finally obtain for all $x_0\in B(0,2R)$ and $1\le r \le R$,
\begin{equation}\label{l2-1-4}
\hat p^{x_0,\gamma,r}(t,x,y)\le c_4\Theta(4R)^{d/\alpha}t^{-d/\alpha},\quad x,y\in V,
0<t\le (\gamma r)^{\alpha}/2.
\end{equation}

{\bf Step (2)} For simplicity, in the following we omit the index $x_0$.
According to \eqref{l2-1-4} and \cite[Theorem (3.25)]{CKS},
 for any $x,y\in V$, $1\le r\le R$ and $0<t\le {(\gamma r)^\alpha/2}$, we have
$$\hat p^{x_0,\gamma, r}(t,x,y)\le c_4^*
\Theta(4R)^{d/\alpha}t^{-d/\alpha}
\inf_{\psi\in L^\infty(V;\mu)}\exp(\psi(x)-\psi(y)+c(\psi)
t),$$ where
$c(\psi)=\sup_{x\in V} b(\psi,x)$ and $$b(\psi,x)=
\sum_{z\in V:\rho(z,x)\le \gamma r}\frac{\hat w_{x,z}\mu_z}{\rho(x,z)^{d+\alpha}}
(e^{\psi(z)-\psi(x)}-1)^2.$$

Next, for fixed $x,y \in V$ and $\lambda>0$, define
$$\psi_\lambda(z)=\lambda (\rho(x,y)\wedge \rho(z,x))\in L^\infty(V;\mu).$$
Then,
\begin{align*}
b(\psi_\lambda,x)&\le
\sum_{z\in V: \rho(x,z)\le \gamma r}\frac{\hat w_{x,z}\mu_z}{\rho(x,z)^{d+\alpha}}
(e^{\lambda (\rho(x,z)\wedge \rho(x,y))}-1)^2\\
&\le
\lambda^2
e^{2\lambda\gamma r}
\sum_{z\in V: \rho(z,x)\le \gamma r}\hat w_{x,z}\mu_z\rho(x,z)^{2-d-\alpha}=:J(x),
\end{align*}
where in the first inequality we used the facts that
$s\mapsto (e^s-1)^2$
is increasing for $s\ge0$ and
$|\psi_\lambda(z)-\psi_\lambda(y)|\le \lambda\big(\rho(y,z)\wedge \rho(x,y)\big)$ for any $x,y,z\in V$, and the second inequality follows from the inequality that
$|e^s-1|^2\le s^2 e^{2|s|}$
for $s\ge0$.
On the other hand, it is obvious that, by \eqref{l2-1-1a}, we can find a constant $R_1\ge1$ such that for all $R>R_1$, $x_0\in B(0,2R)$ and $R^{\theta}\le r \le R$,
$$
\sup_{x \in V}J(x)\le c_5
e^{2\lambda\gamma r}(\lambda \gamma r)^2
(\gamma r)^{-\alpha}.
$$
Combining both estimates above, we arrive at that for all $R>R_1$, $x_0\in B(0,2R)$ and $R^{\theta}\le r \le R$,
\begin{equation*}
\sup_{x \in V}b(\psi_\lambda,x)\le c_5
e^{3\lambda \gamma r}(\gamma r)^{-\alpha},
\end{equation*}
where we have used the fact that
$s^2e^{2s}\le 2e^{3s}$
for $s\ge0$.

Now we suppose that $R>R_1$, $x_0\in B(0,2R)$ and $R^{\theta}\le r \le R$.
Hence, for any $0<t\le {(\gamma r)^\alpha/2}$ and $x,y \in V$,
$$\hat p^{\gamma,r}(t,x,y)\le
c^*_4\Theta(4R)^{d/\alpha}t^{-d/\alpha}\exp\left(-\lambda \rho(x,y) +c_5(\gamma r)^{-\alpha}
e^{3\lambda \gamma r} t\right).$$
Set
$$\lambda=\frac{1}{3\gamma r}
\log\Big(\frac{(\gamma r)^\alpha}{t}\Big)$$
in the right side of the inequality above, we get for all $0<t\le {(\gamma r)^\alpha/2}$ and $x,y \in V$,
$$\hat p^{\gamma,r}(t,x,y)\le c_6\Theta(4R)^{d/\alpha}
t^{-d/\alpha}\Big(\frac{t}
{(\gamma r)^\alpha}\Big)^{{\rho(x,y)}/{({3\gamma r})}}.$$

Therefore, for all $x\in V$ and $0<t\le (\gamma r)^{\alpha}/2$,
\begin{align*}
\Pp_x\Big(\rho(\hat X_t^{x_0,\gamma,r},x)>r\Big)&=\sum_{y\in V:\rho(x,y)>r}\hat p^{\gamma,r}(t,x,y)\mu_y\\
&\le c_7\!\Theta(4R)^{d/\alpha}t^{-d/\alpha}\!\!\sum_{k=0}^{\infty}
\sum_{y\in V: 2^kr<\rho(x,y)\le 2^{k+1}r}\!\!\!\!\!{\Big(\frac{t}{(\gamma r)^\alpha}
\Big)^{{\rho(x,y)}/({3\gamma r})}}\\
&\le c_8\Theta(4R)^{d/\alpha}t^{-d/\alpha}\sum_{k=0}^{\infty}
\mu\big(B(x,2^{k+1}r)\big)\Big(\frac{t}{(\gamma r)^\alpha}
\Big)^{{2^{k}r}/({3\gamma r})}\\
&\le c_{9}\Theta(4R)^{d/\alpha}t^{-d/\alpha}
\sum_{k=0}^{\infty}2^{(k+1)d}r^d\Big(\frac{t}{(\gamma r)^\alpha}
\Big)^{{2^{k}}/{(3\gamma) }}\\
&\le c_{10}\Theta(4R)^{d/\alpha}\Big(\frac{t}{(\gamma r)^\alpha}
\Big)^{({1}/({3\gamma}))-({d}/{\alpha})}.
\end{align*}
Taking {$\gamma\le
\alpha/(3(d+\alpha))$} (in particular, $({1}/({3\gamma}))
-({d}/{\alpha})\ge 1$) in the last inequality immediately yields the desired assertion \eqref{l2-1-1}.
\end{proof}

By Lemma \ref{l2-1}, we can further establish the following estimate for exit time of the process $X$.
\begin{lemma}\label{l2-2}
Suppose that Assumption {\bf (HK1)} holds with some constant $\theta\in (0,1)$ and $R_0\ge1$. Then for every
{$\theta'\in (\theta,1)$},
 there exist constants $R_1\ge 1$ and $C_0,C_1>0$ $($which
are independent of $R_0$ and $R_1$$)$ such
that for every $R>R_1$, $x\in B(0,2R)$, $R^{\theta'}\le r\le 2R$ and $t>0$,
\begin{equation}\label{l2-2-1b}
\Pp_{x}(\tau_{B(x,r)}\le t)\le C_1\Theta(4R)^{d/\alpha}\frac{t}{r^\alpha}.
\end{equation}
\end{lemma}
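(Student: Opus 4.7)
The plan is to compare the exit time of the full process $X$ from $B(x,r)$ with that of the truncated/localized process $\hat X := \hat X^{x,\gamma,r}$ introduced before Lemma~\ref{l2-1}, and then to convert the displacement estimate of Lemma~\ref{l2-1} into the desired exit-time bound by a strong Markov argument. I would fix $\gamma \in (0, \alpha/(3(d+\alpha))]$ as in Lemma~\ref{l2-1}.

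First I would control the ``long jumps'' of $X$. Let $\sigma$ be the first time at which $X$ makes a jump of length exceeding $\gamma r$, and write $\tau = \tau_{B(x,r)}$. Since $x \in B(0,2R)$ and $r \le 2R$, one has $B(x,r) \subset B(0,4R) \subset B(0,6R)$, so Assumption~\textbf{(HK1)}(ii) applies to $X_s$ for every $s < \tau$, provided $\gamma r \ge R^\theta/2$---which holds for $R$ large enough because $r \ge R^{\theta'}$ and $\theta' > \theta$. Applying the L\'evy system formula (Lemma~\ref{levy}) to the stopping time $t \wedge \tau$ with $f(s,y,z) = \I_{\{\rho(y,z) > \gamma r\}}$ then yields
\[
\Pp_x(\sigma \le t \wedge \tau) \le \Ee_x\!\!\int_0^{t\wedge\tau}\!\!\sum_{z:\,\rho(X_s,z)>\gamma r}\frac{w_{X_s,z}\mu_z}{\rho(X_s,z)^{d+\alpha}}\,ds \le \frac{C\,t}{(\gamma r)^\alpha}.
\]

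Second, on the complementary event $\{\sigma > t \wedge \tau\}$ the process $X$ has made only short jumps up to $t \wedge \tau$. The key observation is that, by \eqref{e4-2a} applied with $x_0 = x$, one has $\hat w_{y,z} = w_{y,z}$ whenever $y \in B(x,r)$; hence the generator of $X$ restricted to jumps of length $\le \gamma r$ coincides on $B(x,r)$ with that of $\hat X$. A Meyer-type coupling then realises $X$ and $\hat X$ on a common probability space so that, up to the first long jump of $X$, their trajectories agree, giving
\[
\Pp_x(\tau \le t,\,\sigma > t \wedge \tau) \le \Pp_x\bigl(\tau_{B(x,r)}^{\hat X} \le t\bigr).
\]
To bound $\hat\tau := \tau_{B(x,r)}^{\hat X}$ by the displacement estimate of Lemma~\ref{l2-1}, I would use the standard strong Markov argument: conditioning at $\hat\tau$ and noting $\rho(\hat X_{\hat\tau},x) \ge r$,
\[
\Pp_x(\hat\tau \le t) \le \Pp_x\bigl(\rho(\hat X_t, x) > r/2\bigr) + \Pp_x(\hat\tau \le t) \cdot \sup_{z\in V,\, s \le t}\Pp_z\bigl(\rho(\hat X_s, z) > r/2\bigr).
\]
For $t/r^\alpha$ sufficiently small (depending on $\Theta(4R)$), Lemma~\ref{l2-1} forces the supremum below $1/2$, so that $\Pp_x(\hat\tau \le t) \le 2\Pp_x(\rho(\hat X_t, x) > r/2) \le C\Theta(4R)^{d/\alpha}\, t/r^\alpha$; for larger $t/r^\alpha$ the stated inequality is trivial because $\Theta(4R) \ge 1$. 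Adding the two contributions yields Lemma~\ref{l2-2}.

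The main obstacle I anticipate lies in the coupling step: one must justify carefully that the Meyer construction between $X$ and $\hat X$ is legitimate despite $\hat X$ having a modified conductance profile outside $B(x,r)$, relying crucially on the observation that the two generators agree on every short-range transition out of a point in $B(x,r)$. A secondary issue is that Lemma~\ref{l2-1} is stated for $r \le R$ whereas here $r$ may range up to $2R$; this is accommodated by replacing the ambient scale $R$ by $2R$ at the cost of multiplicative constants that can be absorbed into the $\Theta(4R)^{d/\alpha}$ factor.
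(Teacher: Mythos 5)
Your argument is correct and follows essentially the same route as the paper's proof: the paper likewise bounds the exit time of the truncated localized process $\hat X^{x,\gamma,r}$ by combining the displacement estimate of Lemma \ref{l2-1} with a strong Markov argument at scale $r/2$, and then transfers the bound to $X$ via a Meyer-type first-big-jump estimate (packaged there as \cite[Lemma 3.1]{CKW} together with the identity in law of the exit times of $X$ and the localized process from $B(x,r)$). One small correction to your closing remark: do not rescale the ambient scale $R$ to $2R$ (that would replace $\Theta(4R)$ by the possibly much larger $\Theta(8R)$); instead, as your displayed strong Markov inequality already does, apply Lemma \ref{l2-1} at radius $r/2\le R$, which lies in the admissible range $[R^{\theta},R]$ for $R$ large.
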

\begin{proof}
It suffices to verify the desired assertion for the case that {$0<t\le r^{\alpha}/2$}. The proof is split into two steps.

{\bf Step (1)} We suppose that $\gamma\in (0,\alpha/(3(d+\alpha))]$, $R\ge R_1$ for some $R_1$ large enough, $x\in B(0,2R)$, $R^{\theta}\le r/2 \le R$ and
$0<t\le r^{\alpha}/2$. According to \eqref{l2-1-1},
\begin{equation*}
\sup_{s\in [t,2t]}\sup_{y \in V}\Pp_y\Big( \rho\big(\hat X_{s}^{x,\gamma,r},y\big)>\frac{r}{2} \Big)\le
c_1\Theta(4R)^{d/\alpha}\frac{t}{r^{\alpha}}.
\end{equation*}
Hence, by the strong Markov property,
\begin{align*}
\Pp_{x}\big(\hat\tau_{B(x,r)}^{x,\gamma, r}\le t\big)
&\le  \Pp_{x}\Big(\hat\tau_{B(x,r)}^{x,\gamma,r}\le t,
\rho\big(\hat X_{2t}^{x,\gamma, r},x\big)\le \frac{r}{2} \Big)
+\Pp_{x}\Big(\rho\big(\hat X_{2t}^{x,\gamma, r},x\big)> \frac{r}{2} \Big)\\
&\le \Ee_{x}\left[\I_{\{\hat \tau^{x,\gamma, r}_{B(x,r)}\le t\}}
\Pp_{\hat X_{\hat\tau_{B(x,r)}^{
x,\gamma,r}}^{x,\gamma,r}} \Big(\rho\big
(\hat X_{2t-\hat\tau^{x,\gamma,
r}_{B(x,r)}}^{x, \gamma, r},\hat X_0^{
x,\gamma,r}\big)>\frac{r}{2}\Big)\right]+c_1\Theta(4R)^{d/\alpha}\frac{t}{r^{\alpha}}\\
&\le \sup_{y \in V}\sup_{s \in [t,2t]}\Pp_{y}\Big(\rho\big(\hat
X_{s}^{x,\gamma,r},y\big)
>\frac{r}{2}\Big)+c_1\Theta(4R)^{d/\alpha}\frac{t}{r^{\alpha}}\\
&\le
2c_1\Theta(4R)^{d/\alpha}\frac{t}{r^{\alpha}},
\end{align*}
where $\hat \tau_A^{x,\gamma,r}:=\inf\{t>0; \hat X_t^{x,\gamma,r}\notin A\}$ is the first exit time from $A\subset V$ for
the process $(\hat X_t^{x,\gamma,r})_{t\ge 0}$.

{\bf Step (2)} Similar to the argument in \cite[Section 3.1]{CKW}, we define the following Dirichlet form $(\hat D^{x, r},\hat
\F^{x, r})$
\begin{align*}
\hat D^{x,r}(f,f)=&\sum_{y,z\in
V}\big(f(y)-f(z)\big)^2\frac{\hat w_{y,z}}{\rho(y,z)^{d+\alpha}}\mu_y\mu_z,\quad
f\in
\hat \F^{x,r},\\
\hat \F^{x,r}=&\{f \in L^2(V;\mu): \hat D^{x,
r}(f,f)<\infty\},
\end{align*}
where $\hat w_{x,y}$ is defined by \eqref{e4-2a} with $x_0$ and $R$ replaced by $x$ and $r$ respectively.
In particular, by the definition of $(\hat D^{x, r},\hat
\F^{x, r})$,  we have immediately
$$
\Pp_{x}\big(\tau_{B(x,r)}\le
t\big)=\Pp_{x}\big(\hat\tau^{x,r}_{B(x,r)}\le
t\big), \quad x\in V, r,t>0,
$$
where $(\hat X_t^{x,r})_{t\ge0}$ is the Hunt process associated with $(\hat D^{x, r},\hat
\F^{x, r})$, and $\hat \tau_A^{x,r}:=\inf\{t>0: \hat X_t^{x,r}\notin A\}$ for a subset $A \subset V$.
Furthermore, according to \cite[Lemma 3.1]{CKW}, we find that \begin{align*}
& \Pp_x\big(\hat\tau^{x,r}_{B(x,r)}\le
t\big)\le \Pp_x\big(\hat\tau^{x,\gamma,r}_{B(x,r)}\le
t\big)+c_2t\sup_{y \in B(x,r)}\sum_{z\in V:
\rho(y,z)>\gamma r}\frac{ w_{y,z}\mu_z}{\rho(y,z)^{d+\alpha}}
.
\end{align*}
Noting that $\gamma r>\gamma R^{\theta'}\ge R^{\theta}$ for $R$ large enough, Assumption {\bf(HK1)} implies
$$
\sup_{y \in B(x,r)}\sum_{z\in V:
\rho(y,z)>\gamma r}\frac{ w_{y,z}\mu_z}
{\rho(y,z)^{d+\alpha}}
\le c_3r^{-\alpha}.
$$

Combining all the estimates above yields the desired conclusion
\eqref{l2-2-1b}.
\end{proof}

Now, we are in the position to present the upper bound of the Dirichlet heat kernel
for small times.

 \begin{proposition}\label{T:small}
Suppose that Assumptions {\bf (HK1)} and {\bf (HK2)} hold with constants $\theta\in (0,1)$ and $R_0\ge1$.
Then, for each $\theta'\in (\theta,1)$, there is a constant
$R_1\ge1$ such that for all
$R>R_1$, $x,y \in B(0,R)$ with $\rho(x,y)\ge R^{\theta'}$, and $0<t\le R^\alpha$,
\begin{equation}\label{t4-6-0}
p^{B(0,R)}(t,x,y)\le C_1\Theta(4R)^{k}
\left(t^{-d/\alpha}\wedge\frac{t}{\rho(x,y)^{d+\alpha}}\right),
\end{equation}
where $C_0,C_1>0$ and $k\ge d/\alpha$ are independent of $R_0$, $R_1$, $R$, $x$, $y$ and $t$.
\end{proposition}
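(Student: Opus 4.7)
The proof will follow the Bass--Grigor'yan--Kumagai iteration scheme already employed in Proposition \ref{T:ofde}, but now in the small-time regime $0<t\le R^\alpha$, with the added task of tracking how the factor $\Theta(4R)$ propagates through the iteration. The three essential inputs are an on-diagonal seed estimate of the form $p^{B(0,R)}(t,x,y)\le C\Theta(4R)^{d/\alpha}t^{-d/\alpha}$ valid on the entire small-time window, the exit-time bound of Lemma \ref{l2-2}, and the single-jump estimate of Lemma \ref{L:up2} which is a direct consequence of Assumption (HK2) together with the L\'evy system \eqref{e:levy}.

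\medskip

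First, to obtain the seed on-diagonal bound, I would adapt the Nash-type argument of Step~(1) in the proof of Lemma \ref{l2-1}. Under the global $d$-set condition \eqref{e4-1} combined with the crude estimate $\sum_{y\in B(x,r)}\hat w_{x,y}^{-1}\mu_y\le C\Theta(4R)r^{d}$, the Poincar\'e--Nash inequality \eqref{t4-6-1} holds with an additional prefactor $\Theta(4R)$ for any $x\in V$, $1\le r\le R$, and any measurable $f$. Running the Nash differential inequality argument of Proposition \ref{p-on} with this prefactor yields the on-diagonal upper bound $\hat p^{x_0,\gamma,R}(t,x,y)\le C\Theta(4R)^{d/\alpha}t^{-d/\alpha}$ for $0<t\le(\gamma R)^\alpha/2$. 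Combining this with the Meyer-type decomposition \cite[Lemma 3.1]{CKW} and the tail control of Assumption (HK1)(ii) to remove the truncation, followed by the Cauchy--Schwarz transfer $p^{B(0,R)}(t,x,y)\le p^{B(0,R)}(t,x,x)^{1/2}p^{B(0,R)}(t,y,y)^{1/2}$, I would conclude
$$p^{B(0,R)}(t,x,y)\le C\Theta(4R)^{d/\alpha}t^{-d/\alpha},\quad 0<t\le R^\alpha,\ x,y\in B(0,R).$$

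\medskip

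Second, taking this on-diagonal bound as the initial step, I would iterate as in the proof of Proposition \ref{T:ofde}. For $q\ge 0$ introduce the hypothesis
$$(H_q):\ \ p^{B(0,R)}(t,x,y)\le C_q\Theta(4R)^{k(q)}t^{-d/\alpha}\Bigl(\frac{t}{\rho(x,y)^\alpha}\Bigr)^q,\ \ 0<t\le R^\alpha,\ \rho(x,y)\ge R^{\theta'},$$
and use the splitting \eqref{e:pup1} with $r=\rho(x,y)/2$. The annular contributions $\{\rho_k:2\rho_k>r\}$ are controlled by Lemma \ref{l2-2}, which gives $\Pp_x(\tau_{B(x,r)}\le t/2)\le C\Theta(4R)^{d/\alpha}t/r^\alpha$, while the contributions from $\{\rho_k:2\rho_k\le r\}$ are controlled by the single-jump estimate of Lemma \ref{L:up2}, which is free of $\Theta$. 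Because Lemma \ref{l2-2} delivers the sharp linear exponent in $t/r^\alpha$ (rather than the weaker $(t/r^\alpha)^{1/2-\varepsilon}$ available in the large-time regime), each iteration step improves $q\mapsto q+1$ as long as $q<d/\alpha$, and pushes $q$ to $1+d/\alpha$ in one further step once $q>d/\alpha$. Hence the iteration terminates after a number $N_0$ of rounds determined solely by $d$ and $\alpha$, and yields the off-diagonal decay $t/\rho(x,y)^{d+\alpha}$.

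\medskip

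The main bookkeeping obstacle will be tracking the exponent $k(q)$ of $\Theta(4R)$ through the iteration: each invocation of Lemma \ref{l2-2} contributes one factor $\Theta(4R)^{d/\alpha}$, while Lemma \ref{L:up2} is neutral. Since $N_0$ is a universal constant depending only on $d/\alpha$, the final exponent $k$ satisfies $k\le (N_0+1)d/\alpha$, which is the advertised universal constant in \eqref{t4-6-0}. A minor compatibility issue is that the threshold $r=\rho(x,y)/2\ge R^{\theta'}/2$ needs to exceed $R^{\theta'}$ for Lemma \ref{l2-2} to apply and $R^\theta$ for Lemma \ref{L:up2}; this is handled, as in Proposition \ref{T:ofde}, by enlarging $\theta'$ by an arbitrarily small amount. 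Combining the on-diagonal seed with the final step of the iteration then produces
$$p^{B(0,R)}(t,x,y)\le C\Theta(4R)^k\Bigl(t^{-d/\alpha}\wedge\frac{t}{\rho(x,y)^{d+\alpha}}\Bigr),$$
as required.
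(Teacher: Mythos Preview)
Your overall strategy matches the paper's: obtain an on-diagonal seed of the form $p^{B(0,R)}(t,x,y)\le c\,\Theta(4R)^{d/\alpha}t^{-d/\alpha}$ on $0<t\le R^\alpha$, then run the BGK iteration of Proposition~\ref{T:ofde} with the exit-time bound of Lemma~\ref{l2-2} (which, as you correctly note, gives the sharp linear rate $t/r^\alpha$ and hence $q\mapsto q+1$ per step) together with Lemma~\ref{L:up2}. The bookkeeping on $\Theta(4R)^k$ is exactly what the paper has in mind when it remarks that the induction multiplies $\Theta(4R)$ several times.

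The one place where you diverge from the paper is the on-diagonal seed, and there your route has a gap. You propose to first obtain the bound for the truncated auxiliary kernel $\hat p^{x_0,\gamma,R}$ (this is indeed \eqref{l2-1-4}) and then ``remove the truncation via the Meyer-type decomposition \cite[Lemma~3.1]{CKW}''. But that lemma compares \emph{exit-time probabilities} of the full and truncated processes; it does not transfer on-diagonal heat-kernel upper bounds from $\hat p$ to $p^{B(0,R)}$. The paper avoids this detour entirely: it observes that the Poincar\'e-type inequality \eqref{t4-6-1} holds with the genuine conductances $w_{z,y}$ in place of $\hat w_{z,y}$ for every $x\in B(0,R)$ (under the standing hypothesis of Section~\ref{sec4} that $w_{x,y}>0$, one has $w_{x,y}^{-1}\le \Theta(4R)$ for all relevant pairs), and then runs the Nash differential-inequality argument of Proposition~\ref{p-on} \emph{directly} for $p^{B(0,R)}$, tracking the factor $\Theta(4R)$, to obtain \eqref{t4-6-1a}. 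Replace your first paragraph with this direct argument and the rest of your proof goes through unchanged.
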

\begin{proof} We only sketch the proof. First, we note that \eqref{t4-6-1} holds with $\hat w_{z,y}$ replaced by
$w_{z,y}$ and for all $x\in B(0,R)$.
Following the proof of Proposition \ref{p-on} and carefully tracking the dependence on $\Theta(4R)$, we can obtain that
there are constants
$R_1\ge1$ such that for all
$R>R_1$, $x,y \in B(0,R)$ and $0<t\le R^\alpha$,
\begin{equation}\label{t4-6-1a}
p^{B(0,R)}(t,x,y)\le c_2\Theta(4R)^{d/\alpha}t^{-d/\alpha}.
\end{equation}
Second, using \eqref{t4-6-1a}, \eqref{l2-2-1b} (which requires that $x,y\in B(0,R)$ with $\rho(x,y)\ge R^{\theta'}$) and Assumption {\bf (HK2)}, and repeating the argument of Proposition \ref{T:ofde}, we can  obtain the desired conclusion.
(We note that in the proof of Proposition \ref{T:ofde}, we apply the induction procedure. Hence
the term $\Theta(4R)$ will be multiplied for several times.)
\end{proof}

\subsection{Estimates for Green functions}
The aim of this part is to
obtain some estimates for Green functions.
\begin{proposition}\label{l5-1}
Let $d>\alpha$. Suppose that Assumptions {\bf (HK1)}, {\bf (HK2)} and {\bf (HK3)} hold with constants $\theta\in (0,1)$
and $R_0\ge1$.
Then, for every $\theta'\in (\theta,1)$, there
exist constants $R_1\ge 1$ and $k\ge d/\alpha$
such that the following hold for every $R\ge R_1$,
\begin{itemize}
\item[(1)] for any $x,y\in B(0,R)$ with $\rho(x,y)\ge R^{\theta'}$,
\begin{equation}\label{l5-1-1}
G^{B(0,R)}(x,y)\le C_1\Theta(4R)^{k}\rho(x,y)^{-d+\alpha};
\end{equation}
\item[(2)] for $x,y\in B(0,R/4)$,
\begin{equation}\label{l5-1-2} G^{B(0,R)}(x,y)\ge C_2\Big(\rho(x,y)^{-d+\alpha}\wedge R^{-\theta'(d-\alpha)}\Big),\end{equation}
\end{itemize}
where $C_0,C_1,C_2,k$ are positive constants independent of $R_1$, $R$, $x$ and $y$.
\end{proposition}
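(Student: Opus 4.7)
The plan is to derive both parts from direct integration of the heat kernel estimates established earlier in the section, splitting the time integral at the natural scales $\rho(x,y)^{\alpha}$ and $R^{\alpha}$.

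For the upper bound in Part (1), I write
$$G^{B(0,R)}(x,y)=\Big(\int_0^{\rho(x,y)^{\alpha}}+\int_{\rho(x,y)^{\alpha}}^{R^{\alpha}}+\int_{R^{\alpha}}^{\infty}\Big)p^{B(0,R)}(t,x,y)\,dt.$$
On $(0,R^{\alpha}]$, the hypothesis $\rho(x,y)\ge R^{\theta'}$ places us in the scope of Proposition \ref{T:small}, giving $p^{B(0,R)}(t,x,y)\le C\Theta(4R)^{k}\bigl(t^{-d/\alpha}\wedge t\rho(x,y)^{-(d+\alpha)}\bigr)$. Using the linear-in-$t$ factor on $[0,\rho(x,y)^{\alpha}]$ and the on-diagonal factor on $[\rho(x,y)^{\alpha},R^{\alpha}]$ (where $d>\alpha$ ensures convergence of the upper endpoint), each of these contributions is at most $C\Theta(4R)^{k}\rho(x,y)^{\alpha-d}$.

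For the tail I use the semigroup decomposition
$$p^{B(0,R)}(t,x,y)=\sum_{z\in B(0,R)}p^{B(0,R)}(t-R^{\alpha},x,z)\,p^{B(0,R)}(R^{\alpha},z,y)\,\mu_z\le\Big(\sup_{z}p^{B(0,R)}(R^{\alpha},z,y)\Big)\,\Pp_x\bigl(\tau_{B(0,R)}>t-R^{\alpha}\bigr).$$
The supremum is bounded by $C\Theta(4R)^{k}R^{-d}$ via Cauchy--Schwarz and the on-diagonal estimate $p^{B(0,R)}(R^{\alpha},z,z)\le C\Theta(4R)^{k}R^{-d}$ that appears as \eqref{t4-6-1a} inside the proof of Proposition \ref{T:small}. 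Exponential decay $\Pp_x(\tau_{B(0,R)}>s)\le 2\exp(-\beta s/R^{\alpha})$ follows from $\Ee_x[\tau_{B(0,R)}]\le\Ee_x[\tau_{B(x,2R)}]\le CR^{\alpha}$, which is \eqref{l2-2-1} of Theorem \ref{p2-1} applied to the ball $B(x,2R)$, combined with the usual Markov iteration. Integrating yields a contribution of order $\Theta(4R)^{k}R^{\alpha-d}\le C\Theta(4R)^{k}\rho(x,y)^{\alpha-d}$, using $\rho(x,y)\le 2R$ and $d>\alpha$; this completes \eqref{l5-1-1}.

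For the lower bound in Part (2), I take $\theta'\ge\delta$, where $\delta$ is the constant furnished by Proposition \ref{T:lower}. Proposition \ref{T:lower} supplies $p^{B(0,R)}(t,x,y)\ge C\bigl(t^{-d/\alpha}\wedge t\rho(x,y)^{-(d+\alpha)}\bigr)$ for $x,y\in B(0,R/4)$ and $t\in[R^{\delta\alpha},C_1R^{\alpha}]$. If $\rho(x,y)\ge R^{\theta'}$, I integrate over $[\rho(x,y)^{\alpha},2\rho(x,y)^{\alpha}]$: this interval lies inside $[R^{\delta\alpha},C_1R^{\alpha}]$ for $R$ large enough (since $\rho(x,y)\le R/2$ and $\theta'\ge\delta$), both terms of the minimum are of order $\rho(x,y)^{-d}$ there, and the interval has length of order $\rho(x,y)^{\alpha}$, yielding $G^{B(0,R)}(x,y)\ge C\rho(x,y)^{\alpha-d}$. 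If instead $\rho(x,y)<R^{\theta'}$, I integrate over $[R^{\theta'\alpha},2R^{\theta'\alpha}]$; there $t\ge\rho(x,y)^{\alpha}$ so the minimum equals $t^{-d/\alpha}$, and a direct computation gives $G^{B(0,R)}(x,y)\ge CR^{-\theta'(d-\alpha)}$. Combining the two cases delivers \eqref{l5-1-2}.

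The most delicate step is the tail integral $\int_{R^{\alpha}}^{\infty}$ in Part (1): none of the heat-kernel estimates proved earlier carries the required spatial factor directly at large times, and the remedy is to couple the on-diagonal estimate at time $R^{\alpha}$ with exponential decay of $\Pp_x(\tau_{B(0,R)}>\cdot)$. Promoting \eqref{l2-2-1}, which is stated only for balls $B(x,r)$ centred in $B(0,2R)$ with $R^{\delta}\le r\le R$, to an exponential tail for $\tau_{B(0,R)}$ uniform in $x\in B(0,R)$ via Markov iteration is the key technical point; everything else is essentially arithmetic on the three time intervals.
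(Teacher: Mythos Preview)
Your argument is correct, but for Part~(1) you are working harder than necessary on the tail $\int_{R^{\alpha}}^{\infty}$. The paper simply splits the Green function integral at $\rho(x,y)^{\alpha}$ only: for $0<t\le\rho(x,y)^{\alpha}$ it uses the off-diagonal bound from Proposition~\ref{T:small} (carrying the $\Theta(4R)^{k}$ factor), while for $t>\rho(x,y)^{\alpha}\ge R^{\theta'\alpha}$ it invokes \eqref{t4-2-1a} of Proposition~\ref{p-on}, which gives $p^{B(0,R)}(t,x,y)\le c\,t^{-d/\alpha}$ for \emph{all} $t\ge R^{\theta'\alpha}$ with no upper restriction on $t$ and no $\Theta$ factor. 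Since $d>\alpha$, the integral $\int_{\rho(x,y)^{\alpha}}^{\infty}t^{-d/\alpha}\,dt$ already converges and equals $c\,\rho(x,y)^{\alpha-d}$. Your semigroup-plus-exponential-tail detour via the Markov iteration of \eqref{l2-2-1} is valid, but it re-derives what \eqref{t4-2-1a} delivers in one line; you appear to have overlooked that Proposition~\ref{p-on} is not capped at $t\le R^{\alpha}$ (its proof explicitly extends past $R^{\alpha}$ by enlarging the ball).

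For Part~(2) your strategy matches the paper's. One small point: your interval $[\rho(x,y)^{\alpha},2\rho(x,y)^{\alpha}]$ need not sit inside $[R^{\delta\alpha},C_{1}R^{\alpha}]$ when $C_{1}$ (the constant from Proposition~\ref{T:lower}) is small, since $\rho(x,y)$ can be as large as $R/2$. The paper avoids this by integrating over the full admissible window $[R^{\theta'\alpha},c_{3}R^{\alpha}]$ and splitting at $\rho(x,y)^{\alpha}\wedge c_{3}R^{\alpha}$; you can repair your version by integrating over $[c\,\rho(x,y)^{\alpha},\rho(x,y)^{\alpha}]$ for a suitably small $c$ instead.
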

\begin{proof}
According to \eqref{t4-2-1a} and \eqref{t4-6-0}, under Assumptions {\bf(HK1)} and {\bf (HK2)} there exist constants
$R_1\ge1$ and $k\ge d/\alpha$
such that for all $R\ge R_1$ and $x,y\in B(0,R)$ with $\rho(x,y)\ge R^{\theta'}$,
\begin{equation}\label{l5-1-6}
p^{B(0,R)}(t,x,y)\le
\begin{cases}
{c_1\Theta(4R)^{k}t}{\rho(x,y)^{-d-\alpha}},\quad  & 0<t\le \rho(x,y)^{\alpha},\\
 c_1t^{-d/\alpha},\quad  & t>\rho(x,y)^{\alpha}\ge R^{\theta'\alpha},
\end{cases}
\end{equation}
which implies that
\begin{align*}
G^{B(0,R)}(x,y)&\le c_1\left[\int_0^{\rho(x,y)^{\alpha}}\frac{\Theta(4R)^{k}t}{\rho(x,y)^{d+\alpha}}\,dt+
\int_{\rho(x,y)^{\alpha}}^{\infty}t^{-d/\alpha}\,dt\right]\le c_2\Theta(4R)^{k}\rho(x,y)^{-d+\alpha},
\end{align*} where in the last inequality we used $d>\alpha$. This proves \eqref{l5-1-1}.

On the other hand, by \eqref{t4-7-1}, under Assumptions {\bf(HK1)} and {\bf (HK3)} there exists a constant
$R_2\ge1$ such that for all $R\ge R_2$, $x,y\in B(0,R/4)$ and $R^{\theta'\alpha}\le t \le c_3R^{\alpha}$,
$$
p^{B(0,R)}(t,x,y)\ge c_4\Big(t^{-d/\alpha}\wedge \frac{t}{\rho(x,y)^{d+\alpha}}\Big).
$$
If $x,y\in B(0,R/4)$ such that $\rho(x,y)\le R^{\theta'}$, then
$\rho(x,y)\le t^{1/\alpha}$ for any $R^{\theta'\alpha}\le t \le c_3R^{\alpha}$, and so
\begin{align*}
& G^{B(0,R)}(x,y)\ge \int_{R^{\theta'\alpha}}^{c_3R^{\alpha}}
p^{B(0,R)}(t,x,y)\,dt \ge c_4\int_{R^{\theta'\alpha}}^{c_3R^{\alpha}}t^{-d/\alpha}\,dt
\ge c_5R^{-\theta'(d-\alpha)}.
\end{align*}
While for any $x,y\in B(0,R/4)$ with $\rho(x,y)>R^{\theta'}$, it holds
\begin{align*}
G^{B(0,R)}(x,y)&\ge c_4\Big(\int_{R^{\theta'\alpha}}
^{\rho(x,y)^{\alpha}\wedge (c_3R^{\alpha})}\frac{t}{\rho(x,y)^{d+\alpha}}\,dt+
\int_{\rho(x,y)^{\alpha}\wedge (c_3R^{\alpha})}^{c_3R^{\alpha}}t^{-d/\alpha}\,dt\Big)\\
&\ge c_6\rho(x,y)^{-d+\alpha},
\end{align*}
where in the last inequality we have used the facts that $d>\alpha$ and $\rho(x,y)\le {R}/{2}$ for any
$x,y \in B(0,R/4)$. Combining both estimates above, we obtain \eqref{l5-1-2}.
\end{proof}

According to Proposition \ref{l5-1} and its proof, we can also obtain the following estimates for global Green functions.

\begin{proposition} Assume that {Assumptions {\bf{(HK1)}}, {\bf (HK2)} and {\bf(HK3)} hold with $\theta\in (0,
 1)$ and $R_0\ge1$}. We further suppose that
$\sup_{r\ge 1}\Theta(r)<\infty$.
Then, for every {$\theta'\in (\theta,1)$}, there exists a constant
$R_1\ge1$ such that for all
$x,y\in V$ with $$\rho(x,y)\ge {4(R_1\vee \rho(0,x)\vee \rho(0,y))^{\theta'}},$$ it holds
\begin{equation}\label{l5-1-4}
C_1\rho(x,y)^{-d+\alpha}\le G(x,y)\le C_2\rho(x,y)^{-d+\alpha},
\end{equation}
where $C_1,C_2>0$ are independent of $R_0$, $R_1$, $x$ and $y$.
\end{proposition}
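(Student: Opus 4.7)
The plan is to reduce the global Green function estimates to the Dirichlet ones in Proposition \ref{l5-1}, exploiting the two additional hypotheses in force here: the global $d$-set condition \eqref{e4-1}, which gives $\mu_y\ge c_\mu^{-1}$ uniformly, and $\sup_{r\ge1}\Theta(r)<\infty$, which absorbs the $\Theta(4R)^k$ factor in \eqref{l5-1-1} into an absolute constant. The lower bound is then essentially immediate, whereas the upper bound requires upgrading the Dirichlet heat-kernel estimates to global ones via the Dynkin--Hunt formula.

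For the lower bound, set $D:=R_1\vee\rho(0,x)\vee\rho(0,y)$ and apply \eqref{l5-1-2} with $R=4D$. Then $x,y\in B(0,D)\subset B(0,R/4)$, and the hypothesis $\rho(x,y)\ge 4D^{\theta'}$ together with $4^{\theta'}\le 4$ (since $\theta'<1$) yields
$$\rho(x,y)\ge 4D^{\theta'}\ge 4^{\theta'}D^{\theta'}=(4D)^{\theta'}=R^{\theta'},$$
hence $\rho(x,y)^{-(d-\alpha)}\le R^{-\theta'(d-\alpha)}$ and the minimum in \eqref{l5-1-2} is realized by $\rho(x,y)^{-(d-\alpha)}$. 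Domain monotonicity $G(x,y)\ge G^{B(0,R)}(x,y)$ of the Dirichlet Green functions then yields $G(x,y)\ge C_1\rho(x,y)^{-(d-\alpha)}$.

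For the upper bound, I mimic the final calculation in the proof of Proposition \ref{l5-1} but with the \emph{global} heat kernel $p(t,x,y)$ in place of $p^{B(0,R)}(t,x,y)$: splitting $G(x,y)=\int_0^{\rho(x,y)^\alpha}+\int_{\rho(x,y)^\alpha}^{\infty}$ and using the off-diagonal bound $p(t,x,y)\le C t/\rho(x,y)^{d+\alpha}$ on the first piece and the on-diagonal bound $p(t,x,y)\le C t^{-d/\alpha}$ on the second, each integral evaluates (thanks to $d>\alpha$) to a multiple of $\rho(x,y)^{\alpha-d}$. To produce these global bounds on $p(t,x,y)$ from the Dirichlet bounds of Propositions \ref{T:ofde} and \ref{T:small}, I use the Dynkin--Hunt decomposition
\begin{equation*}
p(t,x,y)=p^{B(0,N)}(t,x,y)+\Ee_x\!\big[p\big(t-\tau_{B(0,N)},X_{\tau_{B(0,N)}},y\big)\,\I_{\{t>\tau_{B(0,N)}\}}\big]
\end{equation*}
for an appropriate $N=N(t,\rho(x,y))$. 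The error term is at most $c_\mu\,\Pp_x(\tau_{B(0,N)}\le t)$ by the uniform lower bound on $\mu_y$, and $\Pp_x(\tau_{B(0,N)}\le t)$ is in turn controlled by the sharpened exit-time estimate \eqref{l2-2-1b}, which under $\sup\Theta<\infty$ reads $\Pp_x(\tau_{B(x,r)}\le t)\le C t/r^\alpha$.

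The main obstacle is the Dynkin--Hunt balancing in the upper bound: choosing $N$ so that the Dirichlet bound on $p^{B(0,N)}(t,x,y)$ is in its regime of validity (i.e.\ $\rho(x,y)\ge N^{\theta'}$ and/or $t\ge N^{\theta'\alpha}$) while the $t/N^\alpha$ error matches the target. In the general setting of Propositions \ref{t4-2}/\ref{T:ofe} this is precisely what forced $\theta<\alpha/(2d+\alpha)$, the obstructions being the exponent $1/2{-}\varepsilon$ of \eqref{p2-1-2} and the potential $R^\kappa$-blow-up of $\mu_y^{-1}$. Both defects are absent here because $\mu_y$ is bounded below and \eqref{l2-2-1b} is linear in $t/r^\alpha$, so the balancing closes for every $\theta'\in(\theta,1)$ provided the admissible window for $N$ is worked out carefully; the delicate regime is when $\rho(x,y)$ is only marginally larger than $D^{\theta'}$ (equivalently, $\theta'$ close to $1$), where the window shrinks and one must handle the two contributions in parallel.
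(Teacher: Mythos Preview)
Your proposal is correct and follows essentially the same route as the paper. The lower bound via \eqref{l5-1-2} with $R=4D$ and domain monotonicity is exactly what the paper does, and for the upper bound the paper also splits $G(x,y)=\int_0^{\rho(x,y)^\alpha}+\int_{\rho(x,y)^\alpha}^\infty$ and upgrades the Dirichlet bounds to global ones via Dynkin--Hunt together with \eqref{l2-2-1b}.

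The only place where the paper is sharper than your write-up is that it does not leave the Dynkin--Hunt scale implicit: it takes
\[
N(t,x,y)=\rho(x,y)^{(d+\alpha)/\alpha},
\]
a single choice independent of $t$. With this $N$, the error term satisfies $J_{2,N}(t)\le c\,\mu_y^{-1}\,t/N^\alpha=c\,t/\rho(x,y)^{d+\alpha}$ directly, and the main term is handled by the first line of \eqref{l5-1-6} (which is \eqref{t4-6-0} under $\sup_r\Theta(r)<\infty$) once one checks $\rho(x,y)\ge N^{\theta_0'}$ for some auxiliary $\theta_0'\in(\theta,1)$; this amounts to $\theta_0'\le \alpha/(d+\alpha)$, and since establishing the conclusion for a small auxiliary exponent automatically yields it for every larger $\theta'$ (the hypothesis $\rho(x,y)\ge 4D^{\theta'}$ becomes stronger as $\theta'$ grows), the ``delicate regime $\theta'$ close to $1$'' you flag is not actually an obstacle. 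So your concern in the last paragraph is real at the level of bookkeeping but dissolves once you fix this explicit $N$ and observe the monotonicity in $\theta'$.
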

\begin{proof} For any $x,y\in V$,  define $D(x,y):=R_1\vee \rho(0,x)\vee \rho(0,y)$
 {with $R_1\ge1$ being the constant in Proposition \ref{l5-1}.} Suppose that $x,y\in V$ satisfy
 $\rho(x,y)\ge 4D(x,y)^{\theta'}$.
Then, applying  (the first inequality in) \eqref{l5-1-6} and \eqref{l2-2-1b}, we can follow the proof of Proposition \ref{T:ofe} with $N(t,x,y):=\rho(x,y)^{{(d+\alpha)}/{\alpha}}$ and get that for any $R$ large enough and any $0<t\le \rho(x,y)^{\alpha}$,
\begin{align*}
p(t,x,y)&\le c_1[\Theta(
\rho(x,y)^{{(d+\alpha)}/{\alpha}})]^{k}
\frac{t}{\rho(x,y)^{d+\alpha}}\le
\frac{ c_2t}{\rho(x,y)^{d+\alpha}}.
\end{align*}
This along with \eqref{t4-2-1} yields that
\begin{align*}
G(x,y)&\le c_3\Bigg(\int_0^{\rho(x,y)^{\alpha}}
  \frac{t}{\rho(x,y)^{d+\alpha}}\,dt+\int_{\rho(x,y)^{\alpha}}^{\infty}t^{-d/\alpha}\,dt\Bigg)\le c_{4}\rho(x,y)^{-d+\alpha},
\end{align*}
proving the desired upper bound in \eqref{l5-1-4}.

On the other hand, the desired lower bound in \eqref{l5-1-4} follows from  \eqref{l5-1-2} by taking $R=4D(x,y)$. The proof is complete.
\end{proof}

\subsection{Consequence of elliptic Harnack inequalities}
Let ${\mathcal L}$
be the generator associated with the process $X$. For any subset $A\subseteq V$, we say that $u:A \rightarrow \R$ is harmonic
with respect to ${\mathcal L}$ on $A$,  if for any $x\in A$, ${\mathcal L}u(x)=0.$ We call that the elliptic Harnack inequality $(\EHI)$ (associated with ${\mathcal L}$) holds at $x_0\in V$, if
there are constants $R_0:=R_0(x_0)\ge 1$ and $c_1:=c_1(x_0,R_0)\ge 1$ such that for every $R\ge R_0$ and
every non-negative function $f(x)$ on $V$ which is harmonic on $B(x_0,2R)$, it holds that
$$\sup_{x\in B(x_0,R)} f(x)\le c_1(x_0,R_0) \inf_{x\in B(x_0,R)} f(x).$$  We emphasize that here we use a weaker version of \EHI, where the constant $c_1(x_0,R_0)
$ may depend on $x_0$ and $R_0$.
Note that
unlike \cite{CKW3}, in the present setting the associated elliptic Harnack inequality is not necessarily translation or scaling invariant.

\begin{proposition}\label{p5-1}
Let $d>\alpha$. Suppose that Assumptions  {\bf (HK1)}, {\bf (HK2)} and {\bf(HK3)} hold with  some
constants $\theta\in (0,1)$ and $R_0\ge1$.
Assume further that $\sup_{r\ge1}\Theta(r)<\infty.$
If $\EHI$ holds at $x_0\in V$, then for every $\theta'\in (\theta,1)$, there exist constants $R_1:=R_1(x_0)\ge1$ and
$c_0:=c_0(x_0,R_1)>0$
such that for all $R\ge R_1$ and $z\in B(x_0,4R)^c$,
\begin{equation}\label{p5-1-1}
w_{x_0,z}\le c_0\left(\sup_{v\in B(x_0,2R),v\neq x_0}w_{v,z}\right)R^{\alpha+\theta'(d-\alpha)}.
\end{equation}
 \end{proposition}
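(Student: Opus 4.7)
The plan is to invoke the elliptic Harnack inequality on a single, carefully chosen non-negative harmonic function whose value at $x_0$ encodes $w_{x_0,z}$ while whose values on the rest of $B(x_0,R)$ are dominated by $\sup_{v\in B(x_0,2R),\,v\neq x_0}w_{v,z}$. Fix $R\ge R_1$ (with $R_1$ to be chosen large, depending on $x_0$ through the EHI constant $c_1(x_0,R_0)$) and $z\in B(x_0,4R)^c$, and set
\[
h(x):=\Pp_x\big(X_{\tau_{B(x_0,2R)}}=z\big),\quad x\in V,
\]
with the convention $h(x)=\I_{\{x=z\}}$ for $x\notin B(x_0,2R)$. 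Then $h\ge 0$ on $V$, and ${\mathcal L}h\equiv 0$ on $B(x_0,2R)$, so EHI at $x_0$ applies and yields $h(x_0)\le c_1(x_0,R_0)\,h(x_1)$ for every $x_1\in B(x_0,R)$. Lemma \ref{levy} together with the standard identity $\Ee_x\!\int_0^\tau g(X_s)\,ds=\sum_y G^{B(x_0,2R)}(x,y)g(y)\mu_y$ gives the representation
\[
h(x)=\mu_z\sum_{y\in B(x_0,2R)} G^{B(x_0,2R)}(x,y)\,\frac{w_{y,z}\mu_y}{\rho(y,z)^{d+\alpha}},\qquad x\in B(x_0,2R).
\]

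Next I would extract a lower bound for $h(x_0)$ isolating $w_{x_0,z}$ and an upper bound for $h(x_1)$ separating the $y=x_0$ term from the rest. Keeping only the diagonal summand gives
\[
h(x_0)\;\ge\;\mu_z\mu_{x_0}\,G^{B(x_0,2R)}(x_0,x_0)\,\frac{w_{x_0,z}}{\rho(x_0,z)^{d+\alpha}}\;\gtrsim\;\mu_z\mu_{x_0}\,R^{-\theta'(d-\alpha)}\,\frac{w_{x_0,z}}{\rho(x_0,z)^{d+\alpha}},
\]
the last step being Proposition \ref{l5-1}(2) applied to the ball $B(x_0,2R)$; the global $d$-set hypothesis of Section \ref{sec4} together with the uniformity in (HK1)--(HK3) justifies translating Proposition \ref{l5-1} from balls centred at $0$ to balls centred at $x_0$ once $R$ dominates $\rho(0,x_0)$. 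Using \eqref{a3-3-1} I pick $x_1\in B(x_0,R)\setminus\{x_0\}$ with $C_*R^{\theta'}\le \rho(x_1,x_0)\le c^*C_*R^{\theta'}$, where $C_*$ is to be fixed below; such $x_1$ lies in $B(x_0,R)$ as soon as $R\ge (c^*C_*)^{1/(1-\theta')}$. Splitting the Green-function representation of $h(x_1)$ into the $y=x_0$ term and the remainder, bounding $G^{B(x_0,2R)}(x_1,x_0)\le C\rho(x_1,x_0)^{-(d-\alpha)}$ by Proposition \ref{l5-1}(1), using $\rho(y,z)\ge \rho(x_0,z)/2$ (valid since $y\in B(x_0,2R)$ and $\rho(x_0,z)\ge 4R$), and the exit-time bound $\sum_y G^{B(x_0,2R)}(x_1,y)\mu_y=\Ee_{x_1}[\tau_{B(x_0,2R)}]\le CR^\alpha$ supplied by Theorem \ref{p2-1}, I obtain
\[
h(x_1)\;\le\;C\mu_z\Big[\frac{\mu_{x_0}w_{x_0,z}}{\rho(x_1,x_0)^{d-\alpha}\rho(x_0,z)^{d+\alpha}}\,+\,\frac{R^\alpha\sup_{v\in B(x_0,2R),\,v\neq x_0}w_{v,z}}{\rho(x_0,z)^{d+\alpha}}\Big].
\]

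Feeding these estimates into EHI and cancelling $\mu_z/\rho(x_0,z)^{d+\alpha}$ yields
\[
c\,\mu_{x_0}R^{-\theta'(d-\alpha)}w_{x_0,z}\;\le\;c_1(x_0,R_0)\,C\Big[\mu_{x_0}\rho(x_1,x_0)^{-(d-\alpha)}w_{x_0,z}+R^\alpha\sup_{v\neq x_0}w_{v,z}\Big].
\]
Choosing $C_*^{d-\alpha}\ge 2c_1(x_0,R_0)C/c$ makes $c_1(x_0,R_0)C\,\rho(x_1,x_0)^{-(d-\alpha)}\le\tfrac12\,c\,R^{-\theta'(d-\alpha)}$, so the first right-hand term is absorbed into the left; rearranging (and using that $\mu_{x_0}$ is a fixed positive constant by \eqref{e4-1}) gives $w_{x_0,z}\le c_0(x_0,R_1)R^{\alpha+\theta'(d-\alpha)}\sup_{v\in B(x_0,2R),\,v\neq x_0}w_{v,z}$, which is exactly \eqref{p5-1-1}.

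The principal technical obstacle is the simultaneous placement of the comparison point $x_1$: it must be far enough from $x_0$ so that both Proposition \ref{l5-1}(1) applies \emph{and} the EHI constant $c_1(x_0,R_0)$ can be absorbed in the final step (both requiring $\rho(x_1,x_0)\gtrsim R^{\theta'}$ with a prefactor depending on $c_1(x_0,R_0)$), yet close enough to remain inside $B(x_0,R)$ where EHI supplies the comparison. The slack between these two scales is $R^{1-\theta'}$, which is why $\theta'<1$ is essential and $R_1$ must be chosen in terms of $c_1(x_0,R_0)$; \eqref{a3-3-1} provides the required point at the prescribed intermediate distance. A subsidiary technicality is that Proposition \ref{l5-1} is formally stated for balls centred at $0$, whereas I apply it to $B(x_0,2R)$; the uniformity in Section \ref{sec4}'s hypotheses makes this translation unproblematic for $R$ large compared to $\rho(0,x_0)$.
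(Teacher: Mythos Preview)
Your proof is correct and follows the same overall scheme as the paper's: the same exit-distribution harmonic function $h(x)=\Pp_x(X_{\tau_{B(x_0,2R)}}=z)$, the same Ikeda--Watanabe representation, the same Green-function lower bound for $h(x_0)$ via Proposition~\ref{l5-1}(2), and the same split of $h$ at the comparison point into the $y=x_0$ term (bounded by Proposition~\ref{l5-1}(1)) plus the remainder (bounded by the exit-time estimate from Theorem~\ref{p2-1}).

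The one genuine difference is the placement of the comparison point. You work hard to locate $x_1$ at distance $\sim C_*R^{\theta'}$ from $x_0$ (invoking \eqref{a3-3-1}), so that $G^{B(x_0,2R)}(x_1,x_0)\lesssim C_*^{-(d-\alpha)}R^{-\theta'(d-\alpha)}$ matches the scale of the left-hand side; absorption then requires tuning $C_*$ against the EHI constant $c_1(x_0,R_0)$. The paper instead simply takes any $y\in B(x_0,R)$ with $\rho(y,x_0)\ge R/2$, obtaining $G^{B(x_0,2R)}(y,x_0)\lesssim R^{-(d-\alpha)}$; since $\theta'<1$ this is asymptotically much smaller than $R^{-\theta'(d-\alpha)}$, so the absorption $c_7R^{-(d-\alpha)}\le\tfrac12 R^{-\theta'(d-\alpha)}$ happens automatically once $R\ge R_1$ is large enough. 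In other words, what you flag as ``the principal technical obstacle'' is in fact not an obstacle: placing the comparison point at scale $R$ rather than $R^{\theta'}$ avoids both the appeal to \eqref{a3-3-1} and the dependence of the intermediate constant $C_*$ on $c_1(x_0,R_0)$, at no cost. Your argument is nonetheless valid, just slightly less direct.
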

\begin{proof}
Without loss of generality, we will assume that $x_0=0$.
For any $R\ge1$ and $z \in B(0,4R)^c$,
define $$f_z(x)=\Pp_x(X_{\tau_{B(0,2R)}}=z),\quad x\in B(0,2R),$$
which is a harmonic function on $B(0,R)$.
By $\EHI$,
there are constants $R_0\ge 1$ and $c_1:=c_1(x_0,R_0)\ge 1$ such that
for all $z \in B(0,4R)^c$,
\begin{equation}\label{p5-1-1a}
f_z(x)\le c_1 f_z(y),\quad x,y\in B(0,R).
\end{equation}
Note that, according to the Ikeda-Watanabe formula (see \cite[Theorem 2]{IW}),
it holds that
$$f_z(x)=\sum_{v\in B(0,2R)}G^{B(0,2R)}(x,v)\frac{w_{v,z}\mu_v}{\rho(v,z)^{d+\alpha}}
,\quad x\in B(0,R).$$
According to Theorem \ref{p2-1} (i.e., \cite[Theorem 3.4]{CKW}), under Assumption
{\bf (HK1)} we have \eqref{l2-2-1}. In particular,  there exists a constant $R_0\ge1$
(here for notational simplicity, we take the same $R_0$ as above)
such that
for all $R\ge R_0$ and $x \in B(0,R)$,
\begin{equation}\label{p5-1-2}
\begin{split}
&\sum_{y\in B(0,2R)}G^{B(0,2R)}(x,y)\mu_y= \Ee_x[\tau_{B(0,2R)}]
\le \Ee_x[\tau_{B(x,4R)}]\le c_2R^{\alpha},
\end{split}
\end{equation}
where $c_2>0$ is independent of
$R_0$ and $R$.
(We note that here $c_2$ is also independent of $x_0$ if we consider that $\EHI$ holds at $x_0\in V$ in the beginning.)

Since ${\rho(z,0)}/{2}\le \rho(z,v)\le{3\rho(z,0)}/{2}$ for all $z \in B(0,4R)^c$ and
$v \in B(0,R)$, for any $R\ge R_0$ with
$R/2\ge R^{\theta'},$ $y\in B(0,R)$ with $\rho(y,0)\ge R/2$ and $z \in B(0,4R)^c$,
\begin{align*}
f_z(y)&=\sum_{v\in B(0,2R)}G^{B(0,2R)}(y,v)\frac{w_{v,z}\mu_v}{\rho(v,z)^{d+\alpha}}
\\
&\le c_3\rho(z,0)^{-d-\alpha}\\
&\quad \times
\Bigg[\Big(\sup_{v\in B(0,2R),v\neq 0}
w_{v,z}\Big)\cdot\Big(\sum_{v\in B(0,2R)}G^{B(0,2R)}(y,v)\mu_v\Big)+w_{0,z}G^{B(0,R)}(y,0)\Bigg]\\
&\le c_4\rho(z,0)^{-d-\alpha}\Big(R^{\alpha}\cdot \sup_{v\in B(0,2R),v\neq 0}
w_{v,z}+w_{0,z}R^{-d+\alpha}\Big),
\end{align*}
where $c_4>0$ is independent of $R_0$ and $R$,
and in the last inequality we have used \eqref{p5-1-2} and \eqref{l5-1-1}, thanks to the fact that $\rho(y,0)\ge R^{\theta'}$.

On the other hand, by \eqref{l5-1-2}, it holds that
\begin{align*}
f_z(0)&=\sum_{v\in B(0,2R)}G^{B(0,2R)}(0,v)
\frac{w_{v,z}\mu_v}{ \rho(v,z)^{d+\alpha}}
\\
&\ge c_5\rho(z,0)^{-d-\alpha}G^{B(0,2R)}(0,0)w_{0,z}\ge c_6\rho(z,0)^{-d-\alpha}w_{0,z}R^{-\theta'(d-\alpha)},
\end{align*}
where $c_6>0$ is independent of $R_0$ and $R$.

Combining both estimates above with \eqref{p5-1-1a}, we find that for all $R\ge R_0$ and $z \in B(0,4R)^c$
\begin{align*}
w_{0,z}R^{-\theta'(d-\alpha)}\le
c_7w_{0,z}R^{-d+\alpha}+
c_7\left(\sup_{v\in B(0,2R),v\neq 0}
w_{v,z}\right)R^{\alpha}.
\end{align*}
Noting that
$c_7$ is independent of $R$ (but may depend on $R_0$)
and that $\theta'<1$, we can take
$R_1$ large enough such that
$c_7R^{-d+\alpha}\le R^{-\theta'(d-\alpha)}/2$ for
all $R\ge R_1\ge R_0$. Therefore, for all $R\ge R_1$,
\begin{equation*}
w_{0,z}R^{-\theta'(d-\alpha)}\le 2 c_7
\left(\sup_{v\in B(0,2R),v\neq 0}
w_{v,z}\right)R^{\alpha},
\end{equation*}
which proves the desired assertion \eqref{p5-1-1}.
\end{proof}

\section{Application: Random Conductance Model}
We will apply results in the previous two sections to study heat kernel estimates and elliptic Harnack inequalities for
random conductance models on
$\mathbb{L}:=\mathbb{Z}^{d_1}_+\times\mathbb{Z}^{d_2}$ (with
$d_1, d_2\in \mathbb{Z}_+$ such that $d_1+d_2\ge1$)
with stable-like jumps.

Let $V=\mathbb{L}$
and
$\{w_{x,y}(\w): x,y\in \mathbb{L}\}$
be a sequence of independent
(but not necessarily identically distributed) random variables on some probability space
$(\Omega, \F_\Omega, \Pp)$ such that
$w_{x,y}=w_{y,x}\ge 0$ for any $x\neq y$, and $w_{x,x}=0$ for any $x\in \mathbb{L}$.
Let $\mu$ be a strictly positive (random) measure on $\mathbb{L}$.
For $\Pp$-a.s.\ fixed $\omega \in \Omega$, we consider the following regular Dirichlet form $(D^{\w},\F^{\w})$
on $L^2(\mathbb{L};\mu)$,
\begin{align*}
D^{\w}(f,f)&=\sum_{x,y\in \mathbb{L}}(f(x)-f(y))^2\frac{w_{x,y}(\w)}{|x-y|^{d+\alpha}},
\quad f\in \F^{\w},\\
\F^{\w}&=\{f\in L^2(\mathbb{L};\mu): D^{\w}(f,f)<\infty\}.
\end{align*}
Note that unlike \eqref{e1-1} we do not include the term $\mu_x\mu_y$ in the Dirichlet form above, but it is obviously reduced into \eqref{e1-1} by replacing $w_{x,y}$ with $\frac{w_{x,y}(\w)}{\mu_x\mu_y}$. We prefer to the expression above due to the consistency of notations as those in \cite[Section 5.2.1]{CKW}.
Let $(X_t^{\w})_{t\ge 0}$ be the symmetric Hunt process associated with $(D^{\w},\F^{\w})$, whose infinitesimal generator ${\mathcal L}^{\w}$ is given by
 \begin{equation}\label{e5-1}
 {\mathcal L}^{\w}f(x):=\frac{1}{\mu_x}\sum_{y\in \mathbb{L}}\big(f(y)-f(x)\big)\frac{w_{x,y}(\w)}{|x-y|^{d+\alpha}}.
 \end{equation}
In the literature, when
$\mu$ is the counting measure (resp. $\mu_x=\mu^{\w}_x:=\sum_{z\in \mathbb{L}}\frac{w_{x,z}(\w)}{|x-z|^{d+\alpha}}$ for all $x\in V$), $X$ is called
a variable speed random walk (resp. a constant speed random walk),
and denote by $p^\w(t,x,y)$ (resp. $q^{\w}(t,x,y)$) the heat kernel of the corresponding process $X$.
\subsection{Heat kernel estimates and local limit theorem}

\begin{theorem}\label{t3-1}{\bf (Heat kernel estimates for variable speed random walks)}
Suppose that $d>4-2\alpha$,
\begin{equation}\label{t3-1-0a}
\sup_{x,y\in \mathbb{L}: x\neq y}\Pp\big(w_{x,y}=0\big)<2^{-4}
\end{equation} and
\begin{equation}\label{t3-1-0}
\sup_{x,y\in \mathbb{L}: x\neq y}\Ee[w_{x,y}^{2p}]<\infty~~~\mbox{and}~~~\sup_{x,y\in
\mathbb{L}: x\neq y}\Ee[w_{x,y}^{-2q}\I_{\{w_{x,y}\neq 0\}}]<\infty
\end{equation}
for $p,q\in \Z_{+}$ with
$$p>\max\left\{\frac{d+1+\theta_0}{d\theta_0 },\frac{d+1}{2\theta_0(2-\alpha)}\right\} \,\, \text{ and    }\,\,
q>\frac{d+1+\theta_0}{d\theta_0},$$
where {$\theta_0:={\alpha}/({2d+\alpha})$}.
Then,
for {$\Pp$-a.s.\ $\w\in \Omega$ and every $x\in \mathbb{L}$},
there is a constant
$R_x(\w)\ge1$ such that for all $R>R_x(\w)$, $t>0$ and $y\in \mathbb{L}$ satisfying $t\ge (|x-y|\vee R_x(\w))^{\theta\alpha}$,
\begin{equation}\label{t3-1-1}
C_1\Big(t^{-d/\alpha}\wedge \frac{t}{|x-y|^{d+\alpha}}\Big)\le p^{\w}(t,x,y)
\le C_2\Big(t^{-d/\alpha}\wedge \frac{t}{|x-y|^{d+\alpha}}\Big),
\end{equation}
where $C_1,C_2>0$ and $\theta\in(0,1)$ are constants independent of $t$, $x$ and $y$.
\end{theorem}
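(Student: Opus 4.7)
The plan is to deduce Theorem~\ref{t3-1} from Theorems~\ref{t2-2} and~\ref{t2-3} by showing that, for $\Pp$-a.s.\ $\w \in \Omega$, the random environment satisfies Assumptions $(d\text{-Vol})$, \textbf{(HK1)}, \textbf{(HK2)}, and \textbf{(HK3)} with some deterministic $\theta \in (0, \alpha/(2d+\alpha))$ and a random starting scale $R_0 = R_x(\w)$. Assumption $(d\text{-Vol})$ is automatic: with $\mu$ the counting measure on $\mathbb{L}$ and $d = d_1 + d_2$, the bounds \eqref{e:a1-1}--\eqref{a2-2} hold deterministically for any $\theta \in (0,1)$ and any sufficiently large $R_0$. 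Thus the substance of the proof is the verification of (HK1)--(HK3), which is purely a probabilistic statement about the conductance field.

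The core strategy is a Borel--Cantelli argument. For each dyadic scale $R \in 2^\N$, I would estimate the $\Pp$-probability that the inequality in each of (HK1)--(HK3) fails for some admissible $x, z \in B(0,6R)$ and some $r \in [R^\theta/2, 2R]$. Each relevant quantity is a sum of independent random variables (either $w_{x,y}$ or $w_{x,y}^{-1}\I_{\{w_{x,y}>0\}}$, weighted by a deterministic kernel), so a $p$-th or $q$-th moment bound combined with Markov's inequality gives a polynomial-in-$R$ estimate on the single-point failure probability. A union bound over the $O(R^d)$ centers $x,z$ and over the $O(\log R)$ dyadic values of $r$, followed by summing over $R$ and applying Borel--Cantelli, then yields $\Pp$-a.s.\ validity for all $R$ beyond a random $R_x(\w)$. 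The exponents $p > \max\{(d+1+\theta_0)/(d\theta_0), (d+1)/(2\theta_0(2-\alpha))\}$ and $q > (d+1+\theta_0)/(d\theta_0)$ in the hypotheses are calibrated exactly for this: the factor $(d+1)$ matches the union bound over $\sim R^d$ centers and $\sim R$ scales, while division by $\theta_0$ reflects that one must push $r$ down to $R^{\theta_0}$, which amplifies the required decay.

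The delicate part of (HK1) is the density lower bound $\mu(B_z^w(x,r)) \ge c_0 \mu(B(x,r))$ with $c_0 > 1/2$. Here the assumption $\sup_{x \ne y}\Pp(w_{x,y}=0) < 2^{-4}$ is essential: for fixed $z$, the family $\{\I_{\{w_{y,z}>0\}} : y \in B(x,r)\}$ consists of independent Bernoulli variables with mean at least $15/16$, so a standard concentration inequality (Hoeffding, say) yields that their sum exceeds $(15/16 - \eps)\mu(B(x,r)) > c_0 \mu(B(x,r))$ except on an event decaying exponentially in $r^d$, which easily absorbs the polynomial union bounds. The step I expect to be the main obstacle is the upper sum $\sum_{y: |x-y|\le r} w_{x,y}/|x-y|^{d+\alpha-2}$ in (HK1)(i): one must simultaneously fit this with the moment constraint needed for the $w_{x,y}^{-1}$-sum, and it is precisely this step that forces $p > (d+1)/(2\theta_0(2-\alpha))$ and requires $d > 4 - 2\alpha$ so that integer values of $p$ satisfying the hypotheses exist. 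Once (HK1)--(HK3) are established $\Pp$-a.s., Theorems~\ref{t2-2} and~\ref{t2-3} directly furnish the matching upper and lower bounds in \eqref{t3-1-1}, completing the proof.
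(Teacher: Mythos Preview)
Your proposal is correct and follows the paper's approach precisely: the paper observes that $(d\text{-Vol})$ is automatic for the counting measure, invokes the Borel--Cantelli argument of \cite[Proposition~5.6]{CKW} to verify Assumptions \textbf{(HK1)}--\textbf{(HK3)} with center shifted to $x$ and random threshold $\widetilde R_x(\omega)$, and then applies Theorems~\ref{t2-2} and~\ref{t2-3}. Your side remark attributing the hypothesis $d > 4 - 2\alpha$ to the existence of admissible integer $p$ is not accurate (integer $p$ satisfying the stated bounds exist for any $d\ge 1$ and $\alpha\in(0,2)$; the constraint is inherited from the estimates in \cite{CKW}), but this does not affect the correctness of your strategy.
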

\begin{proof}
Obviously, the counting measure $\mu$ satisfies
Assumptions {\bf($d$-Vol)}.
Under \eqref{t3-1-0a}, \eqref{t3-1-0} and the condition that
$\{w_{x,y}(\w): x,y\in \mathbb{L}\}$
is a sequence of independent random variables, we can follow the proof of
\cite[Proposition 5.6]{CKW}
(in particular, the Borel-Cantelli arguments), and prove that
there is a constant $\theta\in (0, {\alpha}/({2d+\alpha}))$ such that for all fixed
$x\in \mathbb{L}$ and $\Pp$-a.s. $\w\in \Omega$,
Assumptions {\bf{(HK1)}}, {\bf (HK2)}
and {\bf(HK3)} hold for random conductance $\{w_{x,y}(\w): x,y \in \mathbb{L}\}$
with the associated center $0$ and constant $R_0$ being replaced by $x$ and $\widetilde R_x(\w)\ge1$ respectively. Therefore, according to Theorem \ref{t2-2} and Theorem \ref{t2-3},
for every $x\in \mathbb{L}$ and $\Pp$-a.s. $\w\in \Omega$, there exists $R_x(\w)\ge1$ such that \eqref{t3-1-1} is fulfilled.
\end{proof}

\begin{theorem}\label{t3-3} {\bf (Heat kernel estimates for constant speed random walks)}
Suppose that $d>4-2\alpha$, $w_{x,y}>0$ for all $x\neq y \in \mathbb{L}$,
\begin{equation}\label{t3-3-0}
\inf_{x\in \mathbb{L}}{\mu_x^\w>0},
\quad\sup_{x\in \mathbb{L}}{\mu_x^\w<\infty}
\end{equation}
and \eqref{t3-1-0} holds with the same constants $p,q\in \Z_+$ as these in Theorem $\ref{t3-1}$.
Then
for {$\Pp$-a.s.\ $\w\in \Omega$ and every $x\in \mathbb{L}$},
there is a constant
$R_x(\w)\ge1$
such that for all $R>R_x(\w)$ and for all $t>0$ and $y\in \mathbb{L}$ satisfying $t\ge (|x-y|\vee R_x(\w))^{\theta_1\alpha}$,
\begin{equation}\label{t3-3-1}
C_3\Big(t^{-d/\alpha}\wedge \frac{t}{|x-y|^{d+\alpha}}\Big)\le q^{\w}(t,x,y)
\le C_4\Big(t^{-d/\alpha}\wedge \frac{t}{|x-y|^{d+\alpha}}\Big),
\end{equation}
where $C_3,C_4>0$ and $\theta\in(0,1)$ are constants independent of $t$, $x$ and $y$.
\end{theorem}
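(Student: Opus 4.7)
The plan is to reduce the constant speed setting to the abstract framework of Section \ref{section2} by the same route that was used to deduce Theorem \ref{t3-1}. Define
\[
\tilde w_{x,y}(\omega) := \frac{2\,w_{x,y}(\omega)}{\mu_x^\omega\,\mu_y^\omega},\qquad x\neq y\in\mathbb{L}.
\]
A direct computation shows that the CSRW Dirichlet form $D^\omega$ coincides with the Dirichlet form \eqref{e1-1} built from the measure $\mu^\omega$ and the conductances $\tilde w_{x,y}$. By \eqref{t3-3-0}, $\Pp$-a.s.\ there exist random constants $0<c_1(\omega)\le c_2(\omega)<\infty$ with $c_1(\omega)\le \mu_x^\omega\le c_2(\omega)$ for every $x\in\mathbb{L}$, so $\mu^\omega$ is comparable to the counting measure; in particular Assumption {\bf($d$-Vol)} holds, and $\tilde w_{x,y}(\omega)\asymp w_{x,y}(\omega)$ pointwise in $(x,y)$ with comparability constants depending only on $\omega$.

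The next step is to verify Assumptions {\bf(HK1)}, {\bf(HK2)} and {\bf(HK3)} for $\{\tilde w_{x,y}\}$ about an arbitrary fixed base point $x\in\mathbb{L}$. Each of these conditions demands that certain sums of $\tilde w$-weighted quantities, indexed by pairs in bounded or annular regions at scales between $R^\theta$ and $R$, lie in a prescribed range for large $R$. The pointwise comparison $\tilde w\asymp w$ reduces such sums to the analogous sums formed from $\{w_{x,y}\}$ up to $\omega$-dependent constants independent of $R$. Since $\{w_{x,y}\}$ is an independent family fulfilling the integrability bounds \eqref{t3-1-0} with the same exponents $p,q$ as in Theorem \ref{t3-1}, the Borel--Cantelli scheme used in the proof of that theorem, i.e.\ the argument of \cite[Proposition 5.6]{CKW}, applies verbatim to $\{w_{x,y}\}$ and yields, $\Pp$-a.s., the validity of {\bf(HK1)}--{\bf(HK3)} beyond some $\omega$-dependent radius $\widetilde R_x(\omega)\ge 1$ for a common $\theta\in(0,\alpha/(2d+\alpha))$. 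Comparability then transfers the conclusion to $\tilde w_{x,y}$.

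With {\bf($d$-Vol)} and {\bf(HK1)}--{\bf(HK3)} in force and $d>4-2\alpha$, Theorems \ref{t2-2} and \ref{t2-3} immediately supply the two-sided estimate \eqref{t3-3-1} for $q^\omega(t,x,y)$ whenever $t\ge (|x-y|\vee R_x(\omega))^{\theta_1\alpha}$, with the suitable $\theta_1\in(0,1)$ furnished by those theorems (after possibly enlarging $\widetilde R_x(\omega)$ into $R_x(\omega)$). This closes the argument.

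The principal obstacle, and the reason one cannot simply quote the proof of Theorem \ref{t3-1}, is that $\{\tilde w_{x,y}\}$ is not an independent family: the normalizer $\mu^\omega_x$ couples all edges incident to $x$, so Borel--Cantelli cannot be applied to $\tilde w_{x,y}$ directly. The two-sided uniform (in $x$) control of $\mu^\omega$ afforded by \eqref{t3-3-0} is precisely the device that lets one bypass this coupling: the probabilistic heavy lifting is carried out for the independent family $\{w_{x,y}\}$, and the conclusion is then transferred to $\{\tilde w_{x,y}\}$ by pointwise comparability. A minor additional check is that the assumption $w_{x,y}>0$ for $x\neq y$ ensures $\mu_x^\omega>0$ for every $x$, consistent with the strict positivity requirement on $\mu$ in Section \ref{section2}.
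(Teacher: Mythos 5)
Your proposal is correct and follows essentially the same route as the paper: rewrite the CSRW Dirichlet form in the form \eqref{e1-1} with measure $\mu^\w$ and conductances $w_{x,y}/(\mu_x^\w\mu_y^\w)$, use \eqref{t3-3-0} to get uniform two-sided bounds on $\mu^\w$ (hence {\bf($d$-Vol)} and pointwise comparability with the independent family $\{w_{x,y}\}$), verify {\bf(HK1)}--{\bf(HK3)} via the Borel--Cantelli argument of Theorem \ref{t3-1}, and conclude from Theorems \ref{t2-2} and \ref{t2-3}. Your explicit remark that the normalized conductances are no longer independent and that the probabilistic work must therefore be carried out on $\{w_{x,y}\}$ before transferring by comparability is a useful clarification of a point the paper leaves implicit.
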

\begin{proof}
For the constant speed random walk, $\mu_x^{\w}=\sum_{z\in \mathbb{L}}\frac{w_{x,z}(\w)}{|x-z|^{d+\alpha}}$. Then, the associated Dirichlet form enjoys the expression \eqref{e1-1} with $\mu_x=\mu_x^\w$ and $w_{x,y}=\frac{w_{x,y}(\w)}{\mu_x^\w\mu_y^\w}$.
Under \eqref{t3-3-0},
there are constants $0<c_1\le c_2<\infty$ such that for all $x\in \mathbb{L}$, $c_1\le \mu^{\w}_x\le c_2$, which
implies that Assumption {\bf($d$-Vol)} holds.
Due to the fact that $\mu^{\w}_x$ is uniformly bounded from upper and below, we can follow in the proof of Theorem
\ref{t3-1} to verify that for all fixed
$x\in \mathbb{L}$ and $\Pp$-a.s. $\w\in \Omega$,
Assumptions {\bf{(HK1)}}, {\bf (HK2)}
and {\bf(HK3)} hold
with the associated center $x$ and  constant $\widetilde R_x(\w)\ge1$ respectively.
Therefore, the desired assertion follows from Theorem \ref{t2-2} and Theorem \ref{t2-3}.
\end{proof}

\begin{remark} (1)
Similar
to the case for variable speed random walks (see Theorem \ref{t3-1}), we can allow the conductance in Theorem \ref{t3-3} to be degenerate; that is,
Theorem \ref{t3-3} still holds,
even if the assumption that $w_{x,y}>0$ for all $x,y\in \mathbb L$ is replaced by
an upper bound of the probability
for degenerate conductances
like \eqref{t3-1-0a}.
As seen from the proof of \cite[Proposition 5.6]{CKW},  such an explicit
upper bound depends on the constants $c_1,c_2$ for uniform bounds of $\{\mu_x^\w\}_{x\in \mathbb{L}}$.
We omit the details here.

(2) For the nearest neighbor random conductance models (see e.g. \cite{ABDH,BD,MatP}),
percolation estimates are crucially used to remove or weaken the condition \eqref{t3-3-0}. However, such estimates are not available (and are not easy to gain at least) for our model. This explains the reason why we need assume \eqref{t3-3-0} in Theorem \ref{t3-3}.
\end{remark}

According to \cite[Theorem 1]{CH} (see \cite[Section 4]{BH} or \cite[Theorem 1.11]{ADS2} for related discussions), we have the following local limit theorem for $(X_t^\w)_{t\ge0}$.
For any $a>0$, let $k_{a,t}(x):=k_{a,t}(0,x)$ be the transition density function corresponding to symmetric $\alpha$-stable processes with L\'evy measure $a|z|^{-d-\alpha}\,dz$.
\begin{theorem}[{\bf Local limit theorem for variable speed random walks}]\label{p3-2}
 Under the setting of Theorem $\ref{t3-1}$, assume further that $\Ee {w_{x,y}}=a$ for all $x\neq y\in \mathbb{L}$. Then, for any $T_2>T_1>0$ and $k>1$,
$$\lim_{n\to\infty}\sup_{|x|\le k}\sup_{t\in [T_1,T_2]} |n^dp^{\w}(n^\alpha t, 0, [nx])-k_{a,t}(x)|=0,$$ where $[x]=([x_1], \cdots, [x_d])$ for any $x=(x_1,\cdots, x_d)\in \R_+^{d_1}\times \R^{d_2}$. \end{theorem}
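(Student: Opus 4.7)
The plan is to apply the Croydon--Hambly framework \cite{CH} for upgrading process-level weak convergence into uniform convergence of heat kernel densities. Set $p_n^\w(t,x) := n^d p^\w(n^\alpha t, 0, [nx])$ for $t>0$ and $x\in \R^d$ with $d=d_1+d_2$; the goal is to show $p_n^\w \to k_{a,t}$ uniformly on $[T_1,T_2]\times \overline{B(0,k)}$.

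First, I would invoke the quenched invariance principle established in the companion paper \cite{CKW}: $\Pp$-a.s., the rescaled processes $n^{-1}X^\w_{n^\alpha\cdot}$ converge weakly in the Skorohod topology to the symmetric $\alpha$-stable process $Z$ with L\'evy measure $a|z|^{-d-\alpha}\,dz$. The hypothesis $\Ee w_{x,y}=a$ is precisely what identifies the limit. Reading off one-dimensional marginals, the measures $\sum_y p^\w(n^\alpha t,0,y)\delta_{y/n}$ converge weakly to $k_{a,t}(x)\,dx$ for each $t>0$, equivalently the measures with density $p_n^\w(t,\cdot)$ on $\R^d$ (with respect to Lebesgue measure, after the natural rescaling of the counting measure) converge weakly to $k_{a,t}$.

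Second, Theorem~\ref{t3-1} supplies a uniform pointwise upper bound: for $\Pp$-a.s.\ $\w$ and for $n$ large enough that $n^\alpha T_1\ge (nk\vee R_0(\w))^{\theta\alpha}$, one has $p_n^\w(t,x)\le C_2 t^{-d/\alpha}\le C_2 T_1^{-d/\alpha}$ on $[T_1,T_2]\times \overline{B(0,k)}$. Third, I would apply the large-scale parabolic H\"older regularity Theorem~\ref{p2-2} to the parabolic function $(s,y)\mapsto p^\w(s,0,y)$ on the cylinder $Q(n^\alpha T_1/2,\,0,\,c n)$ with radius $r\asymp n$. After rescaling this yields
\begin{equation*}
|p_n^\w(s,x_1)-p_n^\w(t,x_2)|\le C\bigl(|s-t|^{1/\alpha}+|x_1-x_2|\bigr)^\beta
\end{equation*}
uniformly in $n$ large, for $(s,x_1),(t,x_2)\in [T_1,T_2]\times \overline{B(0,k)}$. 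Combined with the upper bound from the previous step, the family $\{p_n^\w\}$ is uniformly bounded and equicontinuous on this compact set.

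Finally, by Arzel\`a--Ascoli every subsequence of $\{p_n^\w\}$ has a further subsequence converging uniformly to some continuous $\phi$; the weak convergence from the first step forces $\phi=k_{a,t}$ (since $k_{a,t}$ is itself continuous), so the full sequence converges uniformly and the theorem follows. I expect the main obstacle to lie in the H\"older step: one must verify that the scale thresholds $r\ge R^\delta$ and $t\ge r^{\theta'\alpha}$ in Theorem~\ref{p2-2} are compatible with the natural scaling $r\sim n$, $t\sim n^\alpha$, and in particular that the random radius $R_0(\w)$ appearing in Theorem~\ref{t3-1} is eventually dominated by $n^{\theta}$ on a set of full $\Pp$-measure, so that a single H\"older constant serves for all sufficiently large $n$.
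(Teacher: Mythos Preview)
Your plan assembles exactly the same three inputs the paper uses---the quenched invariance principle from \cite{CKW}, the on-diagonal upper bound from Theorem~\ref{t3-1}, and the large-scale parabolic H\"older regularity Theorem~\ref{p2-2}---and the paper likewise funnels them through the Croydon--Hambly framework \cite{CH}. So the architecture is correct.

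There is, however, a real technical point that you skate over and that the paper handles explicitly. The H\"older estimate \eqref{p2-2-1} carries the restriction $(C_0^{-1}|s-t|)^{1/\alpha}+\rho(x,y)\ge 2r^{\delta}$; after rescaling with $r\asymp n$ this becomes a \emph{lower} bound $|s-t|^{1/\alpha}+|x_1-x_2|\gtrsim n^{\delta-1}$ on the separation. Hence your displayed inequality does \emph{not} hold for all pairs in $[T_1,T_2]\times\overline{B(0,k)}$, and since $p_n^\w(t,\cdot)$ is piecewise constant on cells of side $1/n$ (so not even continuous), a straight Arzel\`a--Ascoli argument is not available. The paper confronts precisely this: it only obtains the H\"older bound for $(n\gamma)^\delta\le d_{G^n}(x,y)\le n\gamma$ (equation~\eqref{p4-2-2}), and then, when estimating the averaged difference $J_1(t,n,x,r)$ that drives the local limit theorem, it splits the sum into the region where \eqref{p4-2-2} applies and the residual ball of radius $n^{-1}(nr)^\delta$, controlling the latter via the on-diagonal bound and the volume estimate $\nu^n\bigl(B(x,n^{-1+\delta}r^\delta)\bigr)\le c\,n^{(\delta-1)d}r^{\delta d}\to 0$. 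Your approach can be repaired along the same lines (for instance by an intermediate-point argument at scale $n^{\delta-1}$), but as written the equicontinuity claim overstates what Theorem~\ref{p2-2} delivers. This is exactly the obstacle you anticipated in your final paragraph, though the issue is the lower separation threshold rather than the upper scale compatibility.
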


\begin{proof} We consider the scaling limit procedure as used in \cite[Section 4]{CKW}, and adopt the notations in \cite{CH}.
Let $E=F=\R_+^{d_1}\times \R^{d_2}$, $G=\mathbb{L}$ with $d(x,y)=|x-y|$,  and $G^n=n^{-1}\mathbb{L}$ with $d_{G^n}(x,y)=nd(x,y)$.
Denote by $\nu$ the Lebesgue measure, and by $\nu^n$ the counting measure on $G^n$. It is clear that conditions (a)--(c) in \cite[Assumption 1]{CH} hold with $\alpha(n)=n$, $\beta(n)=n^d$ and $\gamma(n)=n^\alpha$.
Let $X^{n,\w}_t:=n^{-1}X^{\w}_{n^{\alpha}t}$ for any $t>0$, and denote by $p^{n,\w}(t,x,y)$
its heat kernel on $G^n=n^{-1}\mathbb{L}$ with respect to the counting measure $\nu^n$. Then, $p^{n,\w}(t,x,y) =p^{\w}(n^{\alpha}t,nx,ny)$ for all
$x,y \in n^{-1}\mathbb{L}$ and $t>0$, and so it suffices to prove that the conclusion of \cite[Theorem 1]{CH} holds for
 $p^{n,\w}(t,x,y)$. Note that, though
\cite[Theorem 1]{CH} is stated for the local limit theorem for a discrete time Markov process, by carefully tracking the argument,
we can verify that the proof also works for a continuous time Markov process, see the proof of \cite[Theorem 1.11]{ADS2} for more details. Now, we are going to check that (d) in \cite[Assumption 1]{CH} holds for
$p^{n,\w}(t,x,y)$.

According to \cite[Theorem 1.1]{CKW}, under the setting of Theorem $\ref{t3-1}$, the quenched invariance principle holds for
 $(X^{\w}_{t})_{t\ge0}$ with the limit process being a
 (reflected) symmetric $\alpha$-stable L\'evy process
 on $\R_+^{d_1}\times \R^{d_2}$
 with jumping measure $a|z|^{-d-\alpha}\,dz$.
 In particular, this implies that for every $f \in C_b(\R_+^{d_1}\times \R^{d_2})$,
 $t>0$ and $x\in \R_+^{d_1}\times \R^{d_2}$,
 \begin{equation*}
 \lim_{n\rightarrow \infty}
 \left|\mathbf{E}^{n,\w}_{[x]_n}\big[f(X^{n,\w}_t)\big]-\int_{\R^d} f(z)k_{a,t}(x-z)\,dz\right|=0,
 \end{equation*}
where $[x]_n:=n^{-1}[x]$ and $\mathbf{E}^{n,\w}_x$ denotes the expectation with respect to the law of process $(X^{n,\w}_t)_{t\ge 0}$
 with the initial point $x \in n^{-1}\mathbb{L}$.
Combining this with Theorem \ref{p2-2} (which was used to estimate the term $\big|\mathbf{E}^{n,\w}_{[x]_n}[f(X^{n,\w}_t)]
-\mathbf{E}^{n,\w}_{[x]_n}[f(X^{n,\w}_s)]\big|$ for $0<s<t$), we can further verify that
  for every $f \in C_b(\R_+^{d_1}\times \R^{d_2})$, $0<T_1<T_2$ and $x \in \R_+^{d_1}\times \R^{d_2}$,
 \begin{equation}\label{p4-2-1}
 \lim_{n\rightarrow \infty} \sup_{t\in [T_1,T_2]}
 \left|\mathbf{E}^{n,\w}_{[x]_n}\big[f(X^{n,\w}_t)\big]-\int_{\R^d} f(z)k_{a,t}(x-z)\,dz\right|=0.
 \end{equation}
 Since for every $x_0\in \R_+^{d_1}\times \R^{d_2}$ and $r>0$,
 $\displaystyle\int_{\partial B(x_0,r)}k_{a,t}(x-z)\,dz=0$, \eqref{p4-2-1} in turn yields that
 $$ \lim_{n\rightarrow \infty} \sup_{t\in [T_1,T_2]}
 \left|\mathbf{P}^{n,\w}_{[x]_n}\big(X^{n,\w}_t\in B(x_0,r)\big)-\int_{B(x_0,r)} f(z)k_{a,t}(x-z)\,dz\right|=0, $$ see e.g. \cite[Theorem 2.1]{Bill}.
Therefore, (d) in \cite[Assumption 1]{CH} is satisfied.

On the other hand, it follows from Theorem \ref{p2-2} again
and the on-diagonal upper bound
(see \eqref{t3-1-1}) of $p^{\w}(t,x,y)$, there  are constants $\delta\in (0,1)$  and $\beta \in (0,1)$ such that for any $0<T_1<T_2$, $r>0$,
$\gamma\in (0,r]$ and $n\ge1$ large enough,
\begin{equation}\label{p4-2-2}
\begin{split}\sup_{x,y\in B_{G^n}(0,\alpha(n)r),\atop (n\gamma)^{\delta}\le d_{G_n}(x,y)\le n \gamma}\!\!
\sup_{t\in [T_1,T_2]}|p^{\w}(n^\alpha t,0,[nx])\!-\! p^{\w}(n^\alpha t,0,[ny])|\!\le\! c_1(T_1,T_2,r)n^{-d}\gamma^{\beta},\end{split}
\end{equation}
where $c_1>0$ is independent of $n$ and $\gamma$,
and $B_{G^n}(0,r)$ is denoted by the ball on $G^n$ with center $0$ and radius $r$. Note that, \eqref{p4-2-2} is slightly weaker than
\cite[Assumption 2]{CH}, since we require
to take the supremum of $(n\gamma)^{\delta}\le d_{G_n}(x,y)$.
However, according to \eqref{p4-2-2} and
 the on-diagonal upper bound of
 heat kernel \eqref{t3-1-1}, we can get that for every $x \in \R_+^{d_1}\times \R^{d_2}$, $t\in [T_1,T_2]$,
  each fixed $\delta>0$
   and $r\in (0,\delta]$ small enough,
\begin{align*}
|J_1(t,n,x,r)|:&=n^{-d}\left|\sum_{y \in G^n: y \in B(x,r)}\left(p^{n,\w}(t,0,y)-p^{n,\w}(t,0,x)\right)\right|\\
&\le n^{-d}\left|\sum_{y \in G^n: (nr)^{\delta}\le d_{G_n}(y,x)\le nr}\left(p^{n,\w}(t,0,y)-p^{n,\w}(t,0,x)\right)\right|\\
&\quad +n^{-d}\left|\sum_{y \in G^n:  d_{G_n}(y,x)\le (nr)^{\delta}}\left(p^{n,\w}(t,0,y)-p^{n,\w}(t,0,x)\right)\right|\\
&\le n^{-d}\sup_{(nr)^{\delta}\le d_{G_n}(y,x)\le nr}\left|p^{n,\w}(t,0,y)-p^{n,\w}(t,0,x)\right|\cdot
\nu^n\big(B(x,r)\big)\\
&\quad +2n^{-d}\sup_{d_{G_n}(y,x)\le (nr)^{\delta}}|p^{n,\w}(t,0,y)|\cdot \nu^n\big(B(x, n^{-1}(nr)^{\delta})\big)\\
&\le c_2(T_1,T_2,\delta)n^{-d}\Big[\nu^n\big(B(x,r)\big)n^{-d}
r^\beta +n^{-d} \nu^n\big(B(x, n^{-1+\delta}r^{\delta})\big) \Big]\\
&\le c_3(T_1,T_2,\delta)n^{-d}\Big[ r^{d+\beta}+ n^{-(1-\delta)d}r^{\delta d}\Big].
\end{align*}
Hence
$$ \lim_{r\to 0}\limsup_{n\rightarrow \infty}\sup_{t \in [T_1,T_2]}n^d|J_1(t,n,x,r)|=0.$$ With this estimate replacing that of $J_1(t,n)$ in the proof of
\cite[Theorem 4.2]{BH}, the proof of \cite[Theorem 1]{CH} is valid. See also the proof of \cite[Theorem 1.11]{ADS2}.

Therefore, the desired assertion follows from \cite[Theorem 1]{CH}. \end{proof}

\begin{remark}
Here
we take $V=\mathbb{L}$ in Theorem \ref{t3-1} and Theorem
\ref{p3-2} for simplicity. As in
\cite[Section 5]{CKW}, these results also hold for
more general
$d$-sets, including the Sierpinski gasket.
\end{remark}

\subsection{Two counterexamples}
\subsubsection{{\bf Heat kernel estimates}}
In this subsection, we give an example that  shows the heat kernel may behave anomalously
without the moment condition of $w_{xy}^{-1}$.
This example is heavily motivated by \cite[Theorem 2.1\,(1)]{BBHK}.

In the following, denote by
$P^{\w}(n,x,y)$ the $n$-step transition probability for discrete time Markov chain associated
with the conductance $\{C_{x,y}(\w):=\frac{w_{x,y}(\w)}{|x-y|^{d+\alpha}}: x,y\in \Z^d\}$, and by
$q^{\w}(t,x,y)$ the heat kernel for the constant speed random walk
associated with $\{C_{x,y}(\w):x,y\in \Z^d\}$.

\begin{proposition}\label{prop5112}
Let $d\ge 5$ and $\kappa> 1/d$. Then, there exist a sequence of independent random variables
$\{w_{x,y}: x,y\in \mathbb Z^d\}$ such that
$w_{x,y}\stackrel
{d}{=}w_{x',y'}$ for $|x-y|=|x'-y'|$,
and
\begin{equation}\label{eq:bbhk1}
\Pp(w_{x,y}\le 1)=1, ~~~~
c_1^{-1}\le \Ee[w_{x,y}]\le c_1,\quad
\forall x\ne y \in \Z^d,
\end{equation}
for some constant $c_1\ge 1$ independent of $x$ and $y$; while
\begin{equation}\label{eq:bbhk2}
P^\omega (2n, 0,0)\ge C(\omega)
\frac{e^{-(\log n)^\kappa}}{n^2},\quad
\forall n\ge 1,\mbox{a.s. } \Pp
\end{equation} for some random variable $C(\omega)>0$.
In particular, for any $\delta>0$ small enough, there exists a constant $C_1(\w)>0$ such that
$$
q^{\w}(t,0,0)\ge C_1(\w)t^{-2-\delta},\quad t\ge1.
$$
\end{proposition}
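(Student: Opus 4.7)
The construction and argument adapt the nearest-neighbor trap construction of \cite[Theorem 2.1\,(1)]{BBHK} to the long-range setting at hand. First I would take $\{w_{x,y}:x\ne y\in\Z^d\}$ to be i.i.d.\ across unordered pairs with common two-point law $\Pp(w_{x,y}=0)=p_0$ and $\Pp(w_{x,y}=1)=1-p_0$ for some fixed $p_0\in(0,1)$. All the stated hypotheses on $w_{x,y}$ are then immediate: $\Ee[w_{x,y}]=1-p_0$ is a fixed constant, $w_{x,y}\le 1$, and dependence on $|x-y|$ only is automatic. The essential feature of this law is that the atom of $w$ at $0$ has fixed positive mass, so that bottlenecks can be assembled with uniformly positive probability on any prescribed finite bond set.

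Next I would introduce a dyadic sequence of scales $r_k=2^k$ and, on each annular shell $S_k=B(0,r_{k+1})\setminus B(0,r_k)$, define a trap event $E_k$: there exists a vertex $z_k\in S_k$ such that $w_{z_k,y}=0$ for every $y$ with $1\le |y-z_k|\le R_k$ apart from one prescribed ``good'' neighbor that keeps the region containing $0$ accessible. Here $R_k\to\infty$ is tuned so that $|B(R_k)|\asymp (\log k)^{\kappa_0}$ for some $\kappa_0$ slightly larger than $\kappa$. Independence of $\{w_{x,y}\}$ across disjoint bond sets makes the $E_k$ mutually independent, and since the shell volume $|S_k|\sim r_k^d$ grows much faster than the per-vertex trap probability $p_0^{|B(R_k)|}$ decays for this choice of $R_k$, one obtains $\sum_k\Pp(E_k)=\infty$; the second Borel--Cantelli lemma then produces infinitely many good scales $\Pp$-a.s. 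On the event $E_k$ the trap vertex $z_k$ has total outgoing conductance $\mu_{z_k}^\w$ dominated by the long-range tail $\sum_{|y-z_k|>R_k}|y-z_k|^{-d-\alpha}=O(R_k^{-\alpha})$, giving a trap region $A_k\ni 0$ whose exit conductance is subpolynomially small.

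The heart of the argument is then the spectral/trap lower bound. On $E_k$, a Cauchy--Schwarz estimate of the form
\[
P^\w(2n,0,0)\,\mu_0^\w \;\ge\; \frac{\bigl(\Pp_0(X_n\in A_k)\bigr)^2}{\sum_{x\in A_k}(\mu_x^\w)^{-1}},
\]
combined with the localization claim that the walk stays in $A_k$ with positive probability on a time window of length $\sim r_k^\alpha$ (a consequence of the smallness of the exit conductance) and that $|A_k|\lesssim n^{1+o(1)}$ on this window, yields $P^\w(2n,0,0)\ge C(\w)\,e^{-(\log n)^\kappa}n^{-2}$ for $n$ in the dyadic window at scale $k$. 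Concatenation over the infinitely many good $k$ extends the bound to all $n\ge 1$ after adjusting $C(\w)$. The bound on $q^\w(t,0,0)$ then follows routinely from the Poisson representation $q^\w(t,0,0)\,\mu_0^\w=\sum_n e^{-t}(t^n/n!)P^\w(n,0,0)$ together with the observation that $e^{-(\log t)^\kappa}\ge t^{-\delta}$ for any $\delta>0$ once $t$ is large.

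The main difficulty, and the reason the construction genuinely differs from the nearest-neighbor case of \cite{BBHK}, is that the long-range tail contribution $R_k^{-\alpha}$ can never be eliminated by any local configuration of $w$'s, so the trap depth is quantitatively limited. It is exactly this limitation that forces the subpolynomial correction $e^{-(\log n)^\kappa}$ with exponent $\kappa>1/d$ rather than a stronger decay; the threshold $1/d$ arises from balancing the shell volume $|S_k|\sim r_k^d$ against the per-vertex trap probability $p_0^{|B(R_k)|}$ in the Borel--Cantelli count.
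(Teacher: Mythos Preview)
Your proposal has genuine gaps at the level of the trap mechanism and the lower-bound arithmetic.

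First, the trap construction is internally inconsistent. You say $z_k$ retains one ``good'' neighbor that keeps the region containing $0$ accessible, yet you also claim that $\mu_{z_k}^\omega$ is dominated by the long-range tail $\sum_{|y-z_k|>R_k}|y-z_k|^{-d-\alpha}=O(R_k^{-\alpha})$. If the good neighbor is at unit distance with $w=1$, then $\mu_{z_k}^\omega\ge 1$ and the trap is not deep at all; if the good neighbor is far away, there is no mechanism to bring the walk from $0$ to $z_k$ cheaply. Moreover, the set $A_k$ is never concretely defined: you write $A_k\ni 0$ with ``subpolynomially small exit conductance'' and simultaneously $|A_k|\lesssim n^{1+o(1)}$, but for any ball-like region containing $0$ the exit conductance is of order one (there are many bonds leaving it), while for a small region around $z_k$ the starting point $0$ is not inside. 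As written, no single choice of $A_k$ satisfies both requirements.

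Second, even granting those claims, the Cauchy--Schwarz bound does not produce the exponent $n^{-2}$. The correct form is $P^\omega(2n,0,0)\ge (\Pp_0(X_n\in A_k))^2\,\mu_0^\omega/\mu^\omega(A_k)$, with $\mu^\omega(A_k)=\sum_{x\in A_k}\mu_x^\omega$ in the denominator (not $\sum(\mu_x^\omega)^{-1}$). If $\Pp_0(X_n\in A_k)\ge c$ and $\mu^\omega(A_k)\lesssim n^{1+o(1)}$, this gives $n^{-1+o(1)}$, not $n^{-2}$; and there is no indication why the probability of being in $A_k$ should be bounded below once $A_k$ is made small enough to control its volume.

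The paper's argument is an explicit path count rather than a spectral bound, and it requires strictly positive conductances with a heavy lower tail, not Bernoulli $\{0,1\}$ weights. For $|x-y|=1$ one takes $\Pp(w_{x,y}=1)>p_c(d)$ and $\Pp(w_{x,y}=2^{-N})\asymp N^{-(1+\eps)}$; for $|x-y|>1$ one arranges $\Pp(\sum_{|z-y|>1}C_{y,z}\le 2^{-N})\gtrsim N^{-\eps}$. The trap at scale $N$ is a single \emph{strong} edge $(y,z)$ with $C_{y,z}=1$ such that every other bond (nearest-neighbor and long-range) out of $y$ and $z$ has conductance $\le 2^{-N}$; this two-point trap is connected to $0$ by a supercritical-percolation path of length $r_N\le c\,\ell_N$ with $\ell_N=N^{(1+(4d+2)\eps)/d}$. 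One then lower bounds $P^\omega(2n,0,0)$ for $2^N\le 2n<2^{N+1}$ by the probability of the single trajectory: walk from $0$ to the trap entrance, cross the weak bond (cost $\asymp 2^{-N}$), bounce on $(y,z)$ for $2n-2r_N-2$ steps (cost bounded below by a constant, since the strong edge dominates), and return. The two crossings of the weak bond give $2^{-2N}\asymp n^{-2}$; the path cost $C_*^{-2r_N}$ gives the subpolynomial factor $e^{-(\log n)^\kappa}$ once $(1+(4d+2)\eps)/d<\kappa$. The threshold $\kappa>1/d$ comes from balancing the per-site trap probability $\asymp N^{-(1+4d\eps)}$ against the number of grid sites in $[-\ell_N,\ell_N]^d$ in the Borel--Cantelli argument. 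Your intuition about the role of $1/d$ is correct, but the mechanism that actually delivers $n^{-2}$ is the double crossing of a weak bond into a strong-edge trap, not a volume bound on a confinement region.
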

\begin{proof}
Since $$
q^{\w}(t,x,y)=\sum_{n\ge 0}\frac{t^n}{n!}e^{-t}P^{\w}(n,x,y),\quad t>0,\ x,y\in \Z^d,
$$
it suffices to prove \eqref{eq:bbhk2}.
The proof is similar to that of \cite[Theorem 2.1\,(1)]{BBHK} except that we should control long range bonds suitably.
For $\kappa>1/d$, choose $\eps>0$ small enough so that
$(1+(4d+2)\eps)/d<\kappa$.
Take a sequence of independent random variables $\{w_{x,y}: x,y\in \mathbb Z^d\}$
such that
\begin{itemize}
\item[(i)] for every
$|x-y|=1$, we set
\begin{equation}\label{eq:bbhkasqq}
\Pp(w_{x,y}=1)>p_c(d), ~~
\Pp(w_{x,y}=2^{-N})=c_1N^{-(1+\eps)},~~~~\forall N\ge 1,
\end{equation}
for some $c_1>0$, where $p_c(d)$ is the critical probability of the Bernoulli percolation on
$\Z^d$;
\item[(ii)] for $|x-y|>1$, choose $\{w_{x,y}: x,y\in \Z^d\}$ to satisfy \eqref{eq:bbhk1} and
\begin{equation}\label{eq:bbhkasm}
\Pp\left(\sum_{z\in \mathbb{L}:|y-z|>1}C_{y,z}\le M\right)=1,\quad~~
\Pp\left(\sum_{z\in \mathbb{L}: |y-z|>1}C_{y,z}\le 2^{-N}\right)\ge c_2N^{-\eps},~~~~\forall N\ge 1,
\end{equation}
for some $M\ge 1$ and $c_2>0$, where $C_{x,y}=w_{x,y}/|x-y|^{d+\alpha}$. \end{itemize}

Define
\[
\ell_N=N^{(1+(4d+2)\eps)/d}
\]
and for each $x\in \mathbb Z^d$, let
$A_N(x)$ be the event that the configuration
near $y=x+{\bf e}_1$ and $ z=x+2{\bf e}_1$ (here  ${\bf e}_1=(1,0,\cdots, 0)$) is as
follows:
\begin{itemize}
\item[(iii)] $C_{y,z}=1$, $C_{x,y}=2^{-N}$ and other nearest neighbor bonds (that is, bonds with length $1$;
n.n. bonds in short)
connected to $y$ and $z$
have conductance $\le 2^{-N}$; moreover,
$\sum_{u\in \mathbb{L}:|y-u|>1}C_{y,u}\le 2^{-N}$ and
$\sum_{u\in \mathbb{L}: |z-u|>1}C_{z,u}\le 2^{-N}$.
\item[(iv)] $x$ is connected to the boundary of the box of side length $(\log \ell_N)^2$
centered at $x$ by n.n. bonds and conductance $1$.\end{itemize}
Since bonds with conductance $1$ percolate,
using \eqref{eq:bbhkasqq} and \eqref{eq:bbhkasm},
we have
\[
{\Pp}(A_N(x))\ge
c_3 N^{-1-\eps}\cdot N^{-(4d-3)\eps}\cdot N^{-2\varepsilon}
=c_3N^{-(1+4d\eps)},\] where we used the fact that $\Pp(w_{x,y}\le 2^{-N})\le c_* N^{-\varepsilon}$ for all $x,y\in \Z^d$ with
$|x-y|=1$.
Now, let $\mathbb G_n$ be a grid of vertices in
$[-\ell_N,\ell_N]^d\cap {\Z^d}$ that are located by distance $2(\log \ell_N)^2$.
Then, the events $\{A_N(x): x\in \Z^d\}$ are independent, and
\[
{\Pp}\left(\bigcap_{x\in \mathbb G_n} A_N(x)^c\right)\le\exp\left(-c_4\left(\frac{\ell_N}{(\log \ell_N)^2}\right)^dN^{-(1+4d\eps)}\right)
\le e^{-c_5N^{\eps}},
\]
hence the intersection occurs only for finitely many $N$.

Given this stretched-exponential decay, we know that
every connected component of length $(\log \ell_N)^2$
in $[-\ell_N,\ell_N]^d\cap {\mathbb Z^d}$ is connected to the largest connected component in $[-2\ell_N,2\ell_N]^d\cap {\mathbb Z^d}$ by using only n.n. bonds for large enough $N$
(see \cite[Theorem 8.65]{Grim}). Hence, there exists
$N_0=N_0(\omega)<\infty$ such that for any $N\ge N_0$,
$A_N(x)$ occurs for some even-parity vertex
$x=x_N(\omega)\in [-\ell_N,\ell_N]^d\cap {\mathbb Z^d}$
which is connected to $0$ by a path (say Path$_N$) in
$[-2\ell_N,2\ell_N]^d\cap {\mathbb Z^d}$, on which
only the $N_0$ n.n. bonds close to the origin may have
conductance smaller than $1$.

Now suppose $N\ge N_0$ and let $n$ be such that
$2^N\le 2n< 2^{N+1}$. Let $x_N=x_N(\omega)$ be as above and $r_N$ be the length of Path$_N$. Let $\alpha=\alpha(\omega)$ be the minimum of the conductance in the n.n. bonds within $N_0$ steps from
the origin. Then the passage from the origin to $x_N$ in time $r_N$ has probability $\ge \alpha^{N_0} C_*^{-r_N}$
where $C_*=\sum_{|y|\ge 1} |y|^{-d-\alpha}>2d$  (due to the fact that $w_{x,y}\le 1$ for all $x,y\in \Z^d$), while the probability of staying on the bond $(y,z)$ for time $2n-2r_N-2$ is bounded from below by a constant which is independent of $\omega$. The transition probability across $(x,y)$ is of order
$2^{-N}$. Hence, we have
\[
P^\omega (2n,0,0)\ge c_6\alpha^{2N_0}C_*^{-2r_N}2^{-2N}.
\]
Using the comparison of the graph distance and the Euclidean distance (see \cite{AP}), we have
$r_N\le c\ell_N$ for $N$ large enough. Since
$n\asymp 2^N$, we obtain the desired estimate.

Finally, let us give a concrete example
that satisfies \eqref{eq:bbhkasm}. The first  equality in \eqref{eq:bbhkasm} is an easy consequence of  \eqref{eq:bbhk1}. Now, for any $l\ge 1$,
define $f_l(s)= (\log_2 (c_0/s))^{-\eps'l^{-1-d}}$ for
$s\le 1/2$ small enough ($\eps'$ is a small positive value chosen later).
For any $y,z\in \mathbb Z^d$ with $|y-z|=l$, let $\Pp(\w_{y,z}=1)\ge 1/2$ and $\Pp(\w_{y,z}\le s)=f_l(s)$ (by possibly choosing the constant $c_0$ and the range of $s$ in the definition of $f_l(s)$).
Then, we have
\begin{align*}
\Pp\left(\sum_{|y-z|>1}C_{y,z}\le 2^{-N}\right)&\ge \Pp\left(\bigcap_{l=2}^\infty
\bigcap_{|y-z|=l}\{w_{y,z}\le c'2^{-N}\}\right)\ge \prod_{l=2}^\infty f_l(c'2^{-N})^{cl^{d-1}}\\
&\ge c''\prod_{l=2}^\infty
N^{-\eps'l^{-1-d}\cdot cl^{d-1}}
=c''N^{-c\eps'\sum_{l=2}^\infty l^{-2}}
=c''N^{-c'''\eps'}.
\end{align*}
Choosing $\eps'=\eps/c'''$, we obtain
the inequality in \eqref{eq:bbhkasm}.
\end{proof}

\subsubsection{{\bf Elliptic Harnack inequalities}}
\label{EHI-cou}
Before presenting a counterexample such that\ \EHI\ does not hold for random conductance models with non-uniformly elliptic stable-like jumps, we give the following statement, which is a consequence of Proposition \ref{p5-1}.
\begin{proposition}\label{t5-1}
 Under the setting of Theorem $\ref{t3-1}$
 with $d_1=0$ $($i.e. $\mathbb{L}=\mathbb{Z}^{d}$$)$ and $d> \alpha$,
assume further that
 $\{w_{x,y}: x,y\in\Z^d\}$ is a sequence of independent random variables so that
 \begin{equation}\label{t5-1-0a}
 \sup_{x,y\in \Z^d:x\neq y} w_{x,y}^{-1}<\infty,\quad a.s. \,\,\Pp,
 \end{equation}
\begin{equation}\label{t5-1-0}
\inf_{x,y\in \Z^d: x \neq y}\Pp\big(w_{x,y}\le c_0\big)>0\quad\textrm{ for some }c_0>0,
\end{equation} and
\begin{equation}\label{t5-1-1}
\inf_{x,y\in \Z^d: x \neq y}\Pp\big(w_{x,y}>k\big)>0\quad\textrm{ for any }k>0.
\end{equation}
Then,  for $\Pp$-a.s. $\w\in \Omega$,  \ \EHI\
does not hold at any $x_0\in \Z^d$.
\end{proposition}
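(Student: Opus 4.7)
The plan is to use Proposition \ref{p5-1} contrapositively: show that its conclusion \eqref{p5-1-1} fails $\Pp$-a.s.\ for every possible choice of $R_1$ and $c_0$, so \EHI\ cannot hold at any $x_0\in\Z^d$. First I would verify the hypotheses of Proposition \ref{p5-1} in the present setting. The condition $d>\alpha$ is given; Assumptions \textbf{(HK1)}--\textbf{(HK3)} hold $\Pp$-a.s.\ by the Borel--Cantelli arguments already used in the proof of Theorem \ref{t3-1}; and $\sup_{r\ge 1}\Theta(r)<\infty$ $\Pp$-a.s.\ follows immediately from \eqref{t5-1-0a}, since $\Theta(r)=1+\sup_{x,y\in B(0,r)}w_{x,y}^{-1}$.

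Fix $x_0\in\Z^d$ and $\theta'\in(\theta,1)$, and let $\Omega_{x_0}$ be the event that \EHI\ holds at $x_0$. Rounding up the (random) constants $R_1(\w),c_0(\w)$ produced by Proposition \ref{p5-1} to the nearest integers gives $\Omega_{x_0}\subseteq\bigcup_{m,n\in\N}F_{m,n}$, where
\begin{equation*}
F_{m,n}:=\Big\{\w:\;w_{x_0,z}\le n\,R^{\alpha+\theta'(d-\alpha)}\!\!\!\!\sup_{v\in B(x_0,2R),\,v\neq x_0}\!\!\!\!w_{v,z}\ \text{for every }R\ge m,\ z\in B(x_0,4R)^c\Big\}.
\end{equation*}
By countable subadditivity (and, afterwards, a countable union over $x_0\in\Z^d$), it suffices to show $\Pp(F_{m,n})=0$ for every fixed pair $(m,n)\in\N^2$.

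Fix $(m,n)\in\N^2$ and choose $K>n\, c^*m^{\alpha+\theta'(d-\alpha)}$, where $c^*$ is the constant from \eqref{t5-1-0}. Pick distinct points $\{z_k\}_{k\ge1}\subset\Z^d$ with $|z_k-x_0|>4m$; since each $z_k$ lies outside $B(x_0,2m)$ and the $z_k$ are pairwise distinct, the edge families $\{\{v,z_k\}:v\in B(x_0,2m)\}$ for varying $k$ are pairwise disjoint. The events
\begin{equation*}
A_k:=\{w_{x_0,z_k}>K\}\cap\bigcap_{v\in B(x_0,2m)\setminus\{x_0\}}\{w_{v,z_k}\le c^*\}
\end{equation*}
are therefore independent by the assumed independence of the edge conductances, and by \eqref{t5-1-0}--\eqref{t5-1-1} they satisfy $\Pp(A_k)\ge q^*(K)(p^*)^{|B(x_0,2m)|-1}>0$, where $p^*:=\inf_{x\neq y}\Pp(w_{x,y}\le c^*)>0$ and $q^*(K):=\inf_{x\neq y}\Pp(w_{x,y}>K)>0$. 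The second Borel--Cantelli lemma then gives $\Pp(A_k\ \text{i.o.})=1$, and on every such $A_k$ we have
$w_{x_0,z_k}>K>n c^* m^{\alpha+\theta'(d-\alpha)}\ge n\,m^{\alpha+\theta'(d-\alpha)}\sup_{v\in B(x_0,2m),v\neq x_0}w_{v,z_k}$,
contradicting the defining inequality of $F_{m,n}$ at $R=m$, $z=z_k$. Hence $\Pp(F_{m,n})=0$, as required. The main technical obstacle is really just the $\Pp$-a.s.\ verification of \textbf{(HK1)}--\textbf{(HK3)} so that Proposition \ref{p5-1} can be invoked; the remainder is a routine Borel--Cantelli construction that leverages edge-independence together with the two-sided nondegeneracy \eqref{t5-1-0}--\eqref{t5-1-1}.
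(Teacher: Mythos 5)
Your proposal is correct and follows essentially the same route as the paper: both reduce the claim to disproving \eqref{p5-1-1} via Proposition \ref{p5-1} (after checking its hypotheses hold $\Pp$-a.s.) and then run a Borel--Cantelli argument exploiting edge-independence together with \eqref{t5-1-0}--\eqref{t5-1-1}. The only difference is cosmetic: you apply the second Borel--Cantelli lemma to independent bad-configuration events $A_k$ at distant sites, whereas the paper applies the first Borel--Cantelli lemma to the complementary events over dyadic annuli; both yield the same conclusion.
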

\begin{proof}
Without loss of generality, we only verify the case that $x_0=0$.
As explained in the proof of Theorem \ref{t3-1}, for $\Pp$-a.s. $\w\in \Omega$,
Assumptions {\bf (HK1)}, {\bf (HK2)} and {\bf (HK3)} hold with some constant $\theta'\in (0,{{\alpha}/({2d+\alpha})})$.  According to Proposition \ref{p5-1}, it suffices to disprove the inequality \eqref{p5-1-1}.

For every
fixed $R\ge1$, $N>0$ and $z \in B(0,4R)^c$, we set
$$
p(R,N,z):=\Pp\Big(w_{0,z}\le NR^{\alpha+\theta'(d-\alpha)}\sup_{v \in B(0,2R),v\neq 0}w_{v,z}\Big).
$$
Note that here $\theta'\in (0,{{\alpha}/({2d+\alpha})})$ is independent of $R$ and $N$.

Since $\{w_{x,y}: x,y\in\Z^d\}$ is a sequence of independent random variables, for the constant $c_0$ given in \eqref{t5-1-0} it holds that
\begin{align*}
p(R,N,z)&=1-\prod_{v\in B(0,2R),v\neq 0}\Pp\Big(w_{0,z} > NR^{\alpha+\theta'(d-\alpha)} w_{v,z}\Big)\\
&\le 1-\prod_{v\in B(0,2R),v\neq 0}\Pp\Big(w_{0,z} > NR^{\alpha+\theta'(d-\alpha)}c_0, w_{v,z}\le c_0\Big)\\
&=1-\prod_{v\in B(0,2R),v\neq 0}\Pp\Big(w_{0,z} > NR^{\alpha+\theta'(d-\alpha)}c_0\Big)\Pp\Big(w_{v,z}\le c_0\Big)\\
&\le 1-c_1(R,N)^{R^d}:=c_2(R,N).
\end{align*}
Here, by \eqref{t5-1-0} and \eqref{t5-1-1}, $c_1(R,N)$ and $c_2(R,N)$ are positive constants which can be chosen to be independent of $z$. Therefore, for every fixed $N$ and $R$, and any $k_0\ge1$,
\begin{align*}
&\sum_{k=k_0}^{\infty}\Pp\Big(\bigcap_{z\in \Z^d: 2^{k}< |z|\le 2^{k+1}}
\Big\{w_{0,z}\le NR^{\alpha+\theta'(d-\alpha)}\sup_{v \in B(0,2R),v\neq 0}w_{v,z}\Big\}\Big)\\
&\le \sum_{k=k_0}^{\infty}\prod_{z\in \Z^d: 2^{k}< |z|\le 2^{k+1}}p(R,N,z)\le \sum_{k=k_0}^{\infty}
c_2(R,N)^{2^{k}}<\infty.
\end{align*}

This along with the Borel-Cantelli lemma yields that for every $R\ge1$, $N>0$ and $\Pp$-a.s. $\w\in \Omega$,
there exists a constant $k_1:=k_1(\w,R,N)>0$ such that for all
$k\ge k_1$, we can find $z \in \Z^d$ with $2^k<|z|\le 2^{k+1}$ satisfying that
$$
w_{0,z}>NR^{{\alpha+\theta'(d-\alpha)}}\sup_{v \in B(0,2R),v\neq 0}w_{v,z}.
$$
In particular, \eqref{p5-1-1} does not hold true. The proof is finished.
\end{proof}

\begin{example}[{\bf Counterexample for $\EHI$}]\label{exmephi} Let $\{w_{x,y}:x,y\in \Z^d\}$ be a sequence of i.i.d. random variables on some probability space
$(\Omega, \F, \Pp)$ such that
for $w_{x,y}=w_{y,x}>0$ for any $x\neq y$ and $w_{x,x}=0$ for any $x\in \Z^d$.
Suppose that $d>\alpha$. Let $p>0$ be that constant given in Theorem \ref{t3-1}, and let $\varepsilon>0$.
 We assume that for any $x\neq y\in \Z^d$
$$\Pp(w_{x,y}=k)=k^{-(2p+1+\varepsilon)},\quad k\in \Z_+\,\, {\rm with }\,\,k\ge 2,$$ and that $$\Pp(w_{x,y}=1)=1-\sum_{k=2}^\infty \Pp(w_{x,y}=k).$$  Then, assumptions in Theorem \ref{t3-1} and Proposition \ref{t5-1} are fulfilled. Therefore, according to Proposition \ref{t5-1}, with this choice of $\{w_{x,y}\}$ the $\EHI$ does not hold for any $x_0\in \Z^d$.  \end{example}

\begin{remark}
Concerning nearest neighbor models, the equivalence between heat kernel estimates and parabolic Harnack inequalities, both of which only hold for sufficiently large balls,  was established in \cite[Theorem 1.10]{BChen}. However, the proof (see that of \cite[Theorem 7.2]{BChen}) crucially uses some priori estimates of heat kernel for small time, see \cite[(3.9)]{BChen}. As we mentioned above, such estimates are not available in the present setting.
\end{remark}

We close this section with one remark on weak elliptic Harnack inequalities $(\WEHI)$ for random conductance models with stable-like jumps. The reader can refer to \cite{CKW3} and references therein for more details on $\WEHI$ for non-local Dirichlet forms.
\begin{remark} Under the setting of Theorem $\ref{t3-1}$
 with $d_1=0$ (i.e. $\mathbb{L}=\mathbb{Z}^{d}$) and $d>\alpha$, if \eqref{t5-1-0a} holds, then  for $\Pp$-a.s. $\w\in \Omega$ and  every $x\in \Z^d$, there exists a constant $R_x(\w)\ge1$ such that
for any $R>R_x(\w)$ and positive harmonic function $h$ on $B(x,2R)$,
it holds that
\begin{equation}\label{e:ehi}
\frac{1}{R^d}\sum_{z\in B(x,R)}h(z)\le c_0\inf_{z\in B(x,R)}h(z),
\end{equation}
where $c_0>0$ is a non-random constant independent of $x, R_x(\w), R$ and $h$.
Note that, by Proposition \ref{t5-1},
\EHI\ does not hold under \eqref{t5-1-0a}--\eqref{t5-1-1}, and so Example \ref{exmephi} provides us a concrete example that
\WEHI\ holds but \EHI\ fails for random conductance models with stable-like jumps.

The conclusion \eqref{e:ehi} mainly follows from  Moser's iteration procedures as in the proof of
\cite[Theorem 1.1]{FK} (see also the proof of \cite[Theorem 4.1]{DK1}), which are based on the following three ingredients.

First, under \eqref{t5-1-0a}, it is a direct consequence of the discrete fractional Sobolev
inequality on $\Z^d$ (with $d> \alpha$) that for all $R\ge 1$ and $x\in \Z^d$,
\begin{align*}
\Big(\sum_{z\in B(x,R)}f^{{2d}/({d-\alpha})}(z)\Big)^{({d-\alpha})/{d}}\le &c_1
\sum_{y,z\in B(x,R)}\big(f(x)-f(y)\big)^2\frac{w_{y,z}(\w)}{|y-z|^{d+\alpha}}\\
&
+c_1R^{-\alpha}\sum_{z\in B(x,R)}f^2(z),
\end{align*} where $c_1>0$ is independent of $x, R$ and $f$.
Second, also due to \eqref{t5-1-0a}, we can follow the arguments in \cite[Corollary 5]{DK} and use the scaling
property to establish the following weighted Poincar\'e inequality
\begin{align*}
&\sum_{z\in B(x,3R/2)}\big(
f(z)-f_{\Psi_R}\big)^2\Psi_R(z)\\
&\le  c_2R^{\alpha}\sum_{y,z\in B(x,3R/2)}
\big(f(x)-f(y)\big)^2\frac{w_{y,z}(\w)}{|y-z|^{d+\alpha}}\Psi_R(y)\wedge \Psi_R(z),
\end{align*}
where $\Psi_R(x):=\big((3R/2)-|x|\big)\wedge R$, $$f_{\Psi_R}:=
\Big(\sum_{z\in B(x,3R/2)}\Psi_R(z)\Big)^{-1}\Big(\sum_{z\in B(x,3R/2)}f(z)\Psi_R(z)\Big)$$ and $c_2$ is independent of $x, R$ and $f$.
Third, as explained in the proof of Theorem \ref{t3-1},
under \eqref{t3-1-0}, we know that for
$\Pp$-a.s. $\w\in \Omega$ and every $x\in \Z^d$, there exists a constant
$R_x(\w)\ge1$ such that for all $R>R_x(\w)$ and $R^{\theta}\le r \le 2R$,
$$\sup_{y\in (0,2R)}\sum_{z\in \Z^d: |y-z|\le r}\frac{w_{x,y}(\w)}{|z-y|^{d+\alpha-2}}
\le c_3r^{-\alpha}$$ and
$$\sup_{y\in (0,2R)}\sum_{z\in \Z^d: |y-z|> r}\frac{w_{x,y}(\w)}{|z-y|^{d+\alpha}}
\le c_3r^{-\alpha},$$
where $\theta\in (0,1)$ are $c_3>0$ are independent of $x\in \Z^d$, $R_x(\w)$, $r$ and $R$.

We finally emphasize that the argument above is based on
the condition \eqref{t5-1-0a}. It is an
interesting question to prove fractional Sobolev inequalities and weighted Poincar\'e
inequalities as above under weaker condition than \eqref{t5-1-0a}.
\end{remark}

\noindent {\bf Acknowledgements.}
The research of Xin Chen is supported by the National Natural Science Foundation of China (No.\ 11501361).\ The research of Takashi Kumagai is supported
by JSPS KAKENHI Grant Number JP17H01093 and by the Alexander von Humboldt Foundation.\
The research of Jian Wang is supported by the National
Natural Science Foundation of China (No.\ 11522106), the Fok Ying Tung
Education Foundation (No.\ 151002), and the Program for Probability and Statistics: Theory and Application (No.\ IRTL1704).


\begin{thebibliography}{99}
\bibitem{ABDH} Andres, S., Barlow, M.T., Deuschel, J.-D. and Hambly, B. M.:
Invariance principle for the random conductance model, \emph{Probab. Theory Related Fields}
\textbf{156} (2013), 535--580.

\bibitem{ADS1} Andres, S., Deuschel, J.-D. and Slowik, M.:
Invariance principle for the random conductance model in a
degenerate ergodic environment, \emph{Ann. Probab.} \textbf{43} (2015), 1866--1891.


\bibitem{ADS2} Andres, S., Deuschel, J.-D. and Slowik, M.:
Harnack inequalities on weighted graphs and some
applications to the random conductance model, \emph{Probab. Theory Related Fields}
\textbf{164} (2016), 931--977.


\bibitem{ADS3} Andres, S., Deuschel, J.-D. and Slowik, M.: Heat kernel estimates for random walks with degenerate
weights,  \emph{Electron. J. Probab} {\bf21} (2016), paper no. 33, 21 pp.


\bibitem{ADS4}  Andres, S., Deuschel, J.-D. and Slowik, M.:
Harnack inequalities on weighted graphs and some applications to the random conductance model,
\emph{Probab. Theory Relat. Fields} \textbf{166} (2017), 713--752.


\bibitem{ACDS} Andres, S., Chiarini, A., Deuschel, J.-D. and Slowik, M.:
Quenched invariance principle for random walks with time-dependent ergodic degenerate weights,
\emph{Ann. Probab.} \textbf{46} (2018), 302--336.

\bibitem{AP}
Antal, P. and Pisztora, A.:
On the chemical distance for supercritical Bernoulli percolation. \emph{Ann. Probab.} \textbf{24} (1996), 1036--1048.


\bibitem{Bar} Barlow, M.T.:
Random walks on supercritical percolation clusters,
\emph{Ann. Probab.} \textbf{32} (2004), 3024--3084.



\bibitem{BBK} Barlow, M.T., Bass, R.F. and Kumagai, T.:
Parabolic Harnack inequality and heat kernel estimates for random walks
with long range jumps, {\it Math. Z.} {\bf 261} (2009), 297--320.


\bibitem{BChen} Barlow, M.T. and Chen, X.X.:
Gaussian bounds and parabolic Harnack inequality on locally irregular graphs,
\emph{Math. Ann.} \textbf{366} (2016), 1677--1720.


\bibitem{BD} Barlow, M.T. and
 Deuschel, J.-D.: Invariance principle for the random conductance model with unbounded conductances,
 \emph{Ann. Probab.} \textbf{38} (2010), 234--276.

\bibitem{BGK} Barlow, M.T., Grigor'yan, A. and Kumagai, T.:
Heat kernel upper bounds for jump processes and the first exit time,
\emph{J. Reine Angew. Math.} \textbf{626} (2009), 135--157.


\bibitem{BH}
Barlow, M.T. and Hambly, B.M.:
Parabolic Harnack inequality and local limit theorem for percolation clusters,
\emph{Elec. J. Prob.} {\bf14} (2009), paper no. 1, 26 pp.

\bibitem{BK} Bass, R.F. and Kassmann, M.: Harnack inequalities for non-local operators of variable order,
\emph{Trans. Amer. Math. Soc.} \textbf{357} (2005), 837--850.

\bibitem{BL} Bass, R.F. and Levin, D.A.:
Transition probabilities for symmetric jump processes, {\it Trans.
Amer. Math. Soc.} {\bf 354} (2002), 2933--2953.


\bibitem{BeB} Berger, N. and Biskup, M.: Quenched invariance principle for simple random walk on percolation clusters,
\emph{Probab. Theory Relat. Fields} \textbf{137} (2007), 83--120.



\bibitem{BBHK}
Berger, N., Biskup, M., Hoffman, C.E. and Kozma, G.: Anomalous heat-kernel decay for random walk among bounded random conductances,
\emph{Ann. Inst. Henri Poincar\'e Probab. Statist} \textbf{44} (2008), 374--392.


\bibitem{Bill}
Billingsley, P.: \emph{Convergence of Probability Measures},
Wiley series in probability and statistics, New York, Second Edition, 1999.

\bibitem{Bs} Biskup, M.: Recent progress on the random conductance model, \emph{Probab. Surv.} \textbf{8} (2011), 294--373.

\bibitem{BP} Biskup, M. and Prescott, T.-M.:
Functional CLT for random walk among bounded random conductances,
\emph{Electron. J. Probab.} \textbf{12} (2007), no. 49, 1323--1348.

\bibitem{BsRod} Biskup, M. and Rodriguez, P.-F.:
Limit theory for random walks in degenerate time-dependent random environments,
\emph{J. Funct. Anal.} \textbf{274} (2018), 985--1046.


\bibitem{BKM}
Boukhadra, O., Kumagai, T. and Mathieu, P.: Harnack inequalities and local central limit theorem for the polynomial lower tail random conductance model,
\emph{J. Math. Soc. Japan} \textbf{67} (2015), 1413--1448.

\bibitem{CKS} Carlen, E.A., Kusuoka, S. and Stroock, D.W.:
Upper bounds for symmetric Markov transition functions,
\emph{Ann. Inst. Henri Poincar\'e, Section B} \textbf{23} (1987), 245--287.


\bibitem{CKW} Chen, X., Kumagai, T. and Wang, J.:
Random conductance models with stable-like jumps I: quenched invariance principle, preprint 2018, available at {\tt arXiv:1805.04344}.

\bibitem{CK1}
Chen, Z.-Q. and Kumagai, T.: Heat kernel estimates for stable-like processes on $d$-sets,
\emph{Stoch. Proc. Appl.} \textbf{108} (2003), 27--62.

\bibitem{CK2} Chen, Z.-Q. and Kumagai, T.: Heat kernel estimates for jump processes
of mixed types on metric measure spaces, \emph{Probab. Theory Related
Fields} \textbf{140} (2008), 277--317.


\bibitem{CKW1}  Chen, Z.-Q., Kumagai, T. and Wang, J.:
Stability of heat kernel estimates for symmetric non-local Dirichlet forms, to appear in \emph{Memoirs of the AMS}

\bibitem{CKW2}  Chen, Z.-Q., Kumagai, T. and Wang, J.:
Stability of parabolic Harnack inequalities for symmetric non-local Dirichlet forms, to appear in \emph{J. European Math. Soc.}

\bibitem{CKW3}  Chen, Z.-Q., Kumagai, T. and Wang, J.:
Elliptic Harnack inequalities for symmetric non-local Dirichlet forms, to appear in
\emph{J. Math. Pures Appl.}

\bibitem{CKK}
Chen, Z.-Q., Kim, P. and Kumagai, T.: Discrete approximation of symmetric jump processes on metric measure spaces,
\emph{Probab. Theory Related Fields} \textbf{155} (2013), 703--749.

\bibitem{CG}
Coulhon, T. and Grigor'yan, A.:
Random walks on graphs with regular volume growth, \emph{Geom. Funct. Anal.} \textbf{8} (1998), 656--701.


\bibitem{CS0}
Crawford, N. and Sly, A.: Simple random walks on long range percolation cluster I: heat kernel bounds, \emph{Probab.
Theory Relat. Fields} \textbf{154} (2012), 753--786.

\bibitem{CrS}
Crawford, N. and Sly, A.:  Simple random walks on long range percolation cluster II: scaling limit,
\emph{Ann. Probab.} \textbf{41} (2013), 445--502.

\bibitem{CH}
Crodyon, D.A. and Hambly, B.M.: Local limit theorems for sequences of simple random walks on graphs,
\emph{Potential Anal.} \textbf{29} (2008), 351--389.


\bibitem{D} Davies, E.B.:
Large deviations for the heat kernels on graphs,
\emph{J. London Math. Soc.} \textbf{23} (1993), 65--72.


\bibitem{De} Delmotte, T.: Parabolic Harnack inequality and estimates of Markov chains on graphs, \emph{Rev. Mat.
Iberoam.} \textbf{15} (1999), 181--232.

\bibitem{DK} Dyda, B. and Kassmann, M.:
On weighted Poincar\'e inequalities, \emph{Ann. Acad. Sci. Fenn. Math.} \textbf{38} (2013), 721--726.


\bibitem{DK1} Dyda, B. and Kassmann, M.: Regularity estimates for elliptic nonlocal operators, preprint 2015, available at {\tt arXiv:1509.08320}

\bibitem{FK} Felsinger, M. and Kassmann, M.:
Local regularity for parabolic nonlocal operators, \emph{Comm. Partial Differential Equations}
\textbf{38} (2013), 1539--1573.

\bibitem{Fo} Folz, M.:
Gaussian upper bounds for heat kernels of continuous time simple random walks,
\emph{Elect. J. Probab.} \textbf{16} (2011), 1693--1722.


\bibitem{Gr}
Grigor'yan, A.: Gaussian upper bounds for heat kernel on arbitrary manifolds, \emph{J. Differ. Geom.} \textbf{45}
(1997), 33--52.


\bibitem{GHH1}
Grigor'yan, A.,  Hu, E. and Hu, J.:
Lower estimates of heat kernels for non-local Dirichlet forms on metric measure spaces, \emph{J. Funct. Anal.} \textbf{272} (2017), 3311--3346.

\bibitem{GHH2}
Grigor'yan, A.,  Hu, E. and Hu, J.:
Two-sided estimates of heat kernels of jump type Dirichlet forms,
{\em Advances in Math. \bf 330} (2018), 433--515.

\bibitem{Grim}
Grimmett, G.R.: \emph{Percolation}, 2nd edition. Springer, Berlin, 1999.

\bibitem{IW}
Ikeda, N. and Watanabe, S.: On some relations between the
harmonic measure and the L\'evy measure for a certain class of Markov
processes, \emph{J. Math. Kyoto Univ. \bf 2} (1962), 79--95.

\bibitem{Kum}
Kumagai, T.:
Random walks on disordered media and their scaling limits,
\emph{Lect. Notes in Math. {\bf 2101}},
Ecole d'\'{e}t\'{e} de probabilit\'{e}s de Saint-Flour XL--2010,
Springer, New York 2014.

\bibitem{MS0} Murugan, M. and Saloff-Coste, L.: Transition probability estimates for long range random walks, \emph{New York Journal of Mathematics} \textbf{21} (2015), 723--757.


\bibitem{MS} Murugan, M. and Saloff-Coste, L.:
Heat kernel estimates for anomalous heavy-tailed random walks, to appear in \emph{Ann. Inst. Henri Poincar\'e Probab. Statist.}

\bibitem{MR}  Mathieu, P. and Remy, E.:
Isoperimetry and heat kernel decay on percolation clusters,
\emph{Ann. Probab.} \textbf{32} (2004), 100--128.

\bibitem{M} Mathieu, P.: Quenched invariance principles for random walks with random conductances,
\emph{J. Stat. Phys.} \textbf{130} (2008), 1025--1046.


\bibitem{MatP} Mathieu, P. and Piatnitski, A.:
Quenched invariance principles for random walks on percolation clusters,
\emph{Proc. Roy. Soc. A} \textbf{463} (2007), 2287--2307.

\bibitem{PRS}
Procaccia, E., Rosenthal, R. and Sapozhnikov, A.:
Quenched invariance principle for simple random walk on clusters in
correlated percolation models, \emph{Probab. Theory Relat. Fields}
\textbf{166} (2016), 619--657.

\bibitem{Sap}
Sapozhnikov, A.:
Random walks on infinite percolation clusters in models with long-range
correlations, \emph{Ann. Probab.} \textbf{45} (2017), 1842--1898.


\bibitem{SS} Sidoravicius, V. and Sznitman, A.-S.: Quenched invariance principles for walks on clusters
of percolation or among random conductances, \emph{Probab. Theory Relat. Fields} \textbf{129}
(2004), 219--244.


\end{thebibliography}
\end{document}